\providecommand{\U}[1]{\protect\rule{.1in}{.1in}}
\providecommand{\U}[1]{\protect\rule{.1in}{.1in}}
\providecommand{\U}[1]{\protect\rule{.1in}{.1in}}
\providecommand{\U}[1]{\protect\rule{.1in}{.1in}}
\providecommand{\U}[1]{\protect\rule{.1in}{.1in}}
\theoremstyle{theorem}
\newtheorem{Theorem}{Theorem}[section]
\newtheorem*{question}{Question}
\newtheorem{remark}{Remark}[section]
\newtheorem{Lemma}[Theorem]{Lemma}
\newtheorem{Proposition}[Theorem]{Proposition}
\newtheorem{Corollary}[Theorem]{Corollary}
\theoremstyle{definition}
\newtheorem{Definition}[Theorem]{Definition}
\newtheorem{Remark}[Theorem]{Remark}
\newtheorem{Example}[Theorem]{Example}
\numberwithin{equation}{section}
\def\leq{\leqslant}
\def\geq{\geqslant}
\def\phi{\varphi}
\def\ro[#1]{{\textcolor{red}{#1}}}
\title[Stochastic adding machines based on Bratteli diagrams] 
{Stochastic adding machines based on Bratteli diagrams}
\author[D. A. Caprio, A. Messaoudi and G. Valle]{}
\subjclass{Primary: 37A30, 37F50; Secondary: 60J10, 47A10.}
\keywords{Markov chains, stochastic Vershik map, Bratteli diagrams, spectrum of transition operators, fibered Julia sets.}
\email{$^*$danilo.caprio@unesp.br}
\email{$^{\dag}$messaoud@ibilce.unesp.br}
\email{$^{\ddag}$glauco.valle@im.ufrj.br}
\thanks{$^*$ Supported by FAPESP grant $2015/26161-6$}
\thanks{$^{\dag}$ Supported by CNPq grant $307776/2015-8$ and FAPESP project $2013/23643-4$}
\thanks{$^{\ddag}$ Supported by FAPERJ grants $E-26/203.048/2016$ and CNPq grants $305805/2015-0$ and $421383/2016-0$}
\begin{document}
\maketitle

\centerline{\scshape Danilo Antonio Caprio$^*$}
\medskip
{\footnotesize
	\centerline{UNESP - Departamento de Matem\'atica do Instituto de
		Bioci\^encias, Letras e Ci\^encias Exatas.}
	\centerline{Rua Crist\'ov\~ao Colombo, 2265, Jardim Nazareth, 15054-000 S\~ao Jos\'e do Rio Preto, SP, Brasil.}
} 

\medskip

\centerline{\scshape Ali Messaoudi$^\dag$}
\medskip
{\footnotesize
	\centerline{UNESP - Departamento de Matem\'atica do Instituto de
		Bioci\^encias, Letras e Ci\^encias Exatas.}
	\centerline{Rua Crist\'ov\~ao Colombo, 2265, Jardim Nazareth, 15054-000 S\~ao Jos\'e do Rio Preto, SP, Brasil.}
} 

\medskip

\centerline{\scshape Glauco Valle$^\ddag$}
\medskip
{\footnotesize
	\centerline{Universidade Federal do Rio de Janeiro - Instituto de Matem\'atica.}
	\centerline{Caixa Postal 68530, cep 21945-970, Rio de Janeiro, Brasil.}
} 

\bigskip

\begin{abstract}
In this paper, we define some Markov Chains associated to Vershik maps on Bratteli diagrams. We study probabilistic and spectral properties of their transition operators and we prove that the spectra of these operators are connected to Julia sets in higher dimensions. We also study topological properties of these spectra.
\end{abstract}

\section{Introduction}
\label{sec:intro}
Let $g$ be a holomorphic map on $\mathbb{C}^d$, where $d\geq 1$ is an integer. The set $K(g)$ of $z\in\mathbb{C}^d$ such that the forward orbit $\{g^n(z): n \in\mathbb{N}\}$ is bounded is called the (d-dimensional) filled Julia set of $g$. Filled Julia sets and their boundaries (called Julia sets) were defined independently by Julia and Fatou (\cite{fatou19} and \cite{fatou21}, \cite{julia18} and \cite{julia22}).

The study of Julia sets is connected to many areas of mathematics as dynamical systems, complex analysis, functional analysis and number theory, among others (see for example \cite{BC}, \cite{Cr}, \cite{DV}, \cite{Do}, \cite{DH}, \cite{Fa}, \cite{FS}, \cite{GS}, \cite{K}, \cite{L}, \cite{m}, \cite{MT}, \cite{Y}).

There is an important connection between Julia sets and stochastic adding machines. A first example was given by Killeen and Taylor in \cite{kt1} as follows: let $n$ be a nonnegative integer, then it can be written in a unique way in base $2$ as $n=\sum_{i=0}^{k}\varepsilon_i(n) 2^i=\varepsilon_k\ldots \varepsilon_0$, for some $k\ge 0$, where $\varepsilon_k = 1$ and $\varepsilon_i\in\{0,1\}$, for all $i\in\{0,\ldots,k-1\}$. It is known that the addition
of $1$ is given by a classical algorithm, namely $n+1 = \varepsilon_k\ldots\varepsilon_{l+1}(\varepsilon_l+1)0\ldots 0$
where $l=\min\{i\geq 0:\varepsilon_i(n)=0\}$. Killeen and Taylor defined the stochastic
adding machine assuming that each time a carry should be added, it is
added with probability $0 < p < 1$ and it is not added with probability $1-p$.
Moreover, the algorithm stops when the first carry is not added. So this random algorithm maps
$n = \varepsilon_k\ldots \varepsilon_0$ to $n$ itself with probability $1-p$, to $n+1$ with probability $p^{l+1}$ and to $m=n-2^r+1=\varepsilon_k\ldots \varepsilon_{d+1}\ldots \varepsilon_r0\ldots 0$ with probability $p^r(1-p)$. With this they obtained a countable Markov chain whose associated transition operator $S=(p_{i,j})_{i,j\in\mathbb{N}}$ is a bistochastic infinite matrix whose spectrum is equal to the filled Julia set of the quadratic map $\frac{z^2-(1-p)}{p} \, , \ z \in \mathbb{C}$.

In \cite{msv}, \cite{ms}, \cite{mu} and \cite{mv}, stochastic adding machines based on other systems of numeration have been introduced. They are connected to one-dimensional fibered Julia sets (see \cite{msv}) and also to Julia sets in dimension greater than one (\cite{cap}, \cite{ms} and \cite{mu}). A d-dimensional fibered filled Julia set of a sequence $(g_j)_{j\ge 1}$ of holomorphic maps on $\mathbb{C}^d$ is the set $K((g_j)_{j\ge 1})$ of $z \in\mathbb{C}^d$ such that the forward orbit $\{\tilde{g}_j(z): j\in\mathbb{N}\}$ is bounded, where $\tilde{g}_j = g_j \circ g_{j-1} \circ ... \circ g_1$ for all $j\geq 1$.

In this paper, we introduce stochastic adding machines associated to Vershik maps on Bratteli diagrams. Bratteli diagrams are important objects in the theories of operator algebras and dynamical systems. It was originally defined in 1972 by O. Bratteli \cite{brat} for classification of C${}^*$-algebras. Bratteli diagrams turned out to be a powerful tool in the study of measurable, Borel, and Cantor dynamics (see \cite{GPS}, \cite{HPS}, \cite{m}, \cite{Ver}). The interest on Bratteli diagrams is that any aperiodic transformation in measurable, Borel, and Cantor dynamics can be realized as a Vershik map acting on the path space of a Bratteli diagram (see \cite{BDK}, \cite{HPS}, \cite{m}, \cite{Ver}, \cite{Ver2}).

A particular application arises when we use the Vershik map to embed $\mathbb{Z}_+$ into the set of paths of the associated Bratteli diagram. This embedding allows us to consider the restriction of the Vershik map on that copy of $\mathbb{Z}_+$ as the map $n \mapsto n+1$. It also allows a representation of systems of numeration through Bratteli diagrams, making possible for us to introduce more general stochastic adding machines. Indeed we are able to define a more general Markov process on the set $X$ of infinite paths on the Bratteli diagram whose restriction to the copy of $\mathbb{Z}_+$ is the stochastic adding machine, we call this process the "Bratteli-Vershik process" or simply BV process and the associated Stochastic adding machine the Bratteli-Vershik stochastic adding machine or simply BV stochastic adding machine. 

We will give necessary and sufficient conditions that assure transience or recurrence of the BV stochastic adding machines.  We will also prove that the spectrum of the BV stochastic adding machine transition operator $S$ (acting on $l^{\infty}(\mathbb{N})$) is related to fibered filled Julia sets in higher dimension. For example, if the Bratteli diagram is stationary and its incidence matrix is $M=\left(\begin{array}{cc} a & b \\ c & d \end{array}\right)$ where $a,b,c,d$ are nonnegative integers, then the point spectrum of the transition operator of the Bratteli-Vershik stochastic adding machine associated to $M$ is related to the Julia set $$\mathcal{K}:=\{(x,y)\in\mathbb{C}^2: (g_n\circ\ldots\circ g_1(x,y))_{n\geq 1} \textrm{ is bounded} \},$$ where $g_n(x,y)=\left(\frac{1}{p_{n+1}}x^ay^b-\frac{1-p_{n+1}}{p_{n+1}}, \frac{1}{p_{n+1}}x^cy^d-\frac{1-p_{n+1}}{p_{n+1}}\right)$ and $0<p_{n+1}<1$, for all $n\geq 1$.

Just to mention an important connection, the study of these spectra gives information about the dynamical properties of transition operators
acting on separable Banach spaces (see for instance \cite{BM} and \cite{GEM}). For example, if T is topologically transitive, then any connected component of the spectrum intersects the unit circle. However, here we do not aim at the study of the dynamical properties of the transition operators. We will also study topological properties of this spectrum.

The paper is organized as follows. In Section \ref{sec:bratteli} we give a background about Bratteli diagrams and we define the Vershik map.
In Section \ref{sec:BVP} we define the BV processes and the BV stochastic adding machines giving necessary and sufficient conditions for transience, null recurrence and positive recurrence. Section \ref{sec:spectrum} is devoted to provide an exact description of the spectra of the transition operators of BV stochastic machines acting on $l^\infty(\mathbb{N})$ in the case of $2\times 2$ Bratteli diagrams. Furthermore, we prove some topological properties of this spectrum. Section \ref{generalization} describes generalization to $l\times l$, $l\geq 3$, Bratteli diagrams. 

\section{Bratteli diagrams}
\label{sec:bratteli}

\subsection{Basics on Bratteli diagrams}

In this section we introduce the necessary notation on Bratteli diagrams. Here we follow \cite{Du} and \cite{bmn} and we recommend both texts, as well as \cite{BK}, as references on Bratteli diagrams for the interested reader. 

A \emph{Bratteli diagram} is an infinite directed graph $(V,E)$ such that the vertex set $V = \bigcup_{k=0}^\infty V(k)$ 
and the edge set $E = \bigcup_{k=1}^\infty E(k)$  are partitioned into finite disjoint subsets $V(k)$ and $E(k) $, where there exist maps $s:E\longrightarrow V$ and $r:E\longrightarrow V$ such that $s$ restricted to $E(k)$ is a surjective map from $E(k)$ to $V(k-1)$ and $r$ restricted to $E(k)$ is a surjective map from $E(k)$ to $V(k)$ for every $k \ge 1$.

For every $e \in E$, $s(e)$ and $r(e)$ are called respectively the \emph{source} and \emph{range} of $e$. For convenience if $\# V(k) = l$ we denote $V(k) = \{(k,1),...,(k,l)\}$ or simply $V(k) = \{1,...,l\}$ when there is no possibility of misidentification of the value of $k$.

\begin{Remark}
It is usual to define the Bratteli diagrams under the condition that $V(0)$ is a singleton. We do not impose this condition. Our definition is more suitable to the understanding of stationarity and is more appropriate for the discussion of the results in this paper. However we could also use that condition in the definition without prejudice to the results in this paper. 
\end{Remark}

It is convenient to give a diagrammatic representation of a Bratteli diagram considering $V(k)$ as a "horizontal" level $k$ and the edges in $E(k)$ heading downwards from vertices at level $k-1$ to vertices at level $k$. Also, if $\# V(k - 1) = l(k - 1)$ and $\# V(k) = l(k)$, then $E(k)$ determines a $l(k)\times l(k - 1)$ \emph{incidence matrix} $M(k)$, where $M(k)_{i,j}$ is the number of the edges going from vertex $j$ in $V(k-1)$ to vertex $i$ in $V(k)$. By definition of Bratteli diagrams, we have that $M(k)$ has non identically zero rows and columns. 

Let $k, \tilde{k} \in\mathbb{Z}_+$ with $k<\tilde{k}$ and let $E(k+1)\circ E(k+2)\circ \ldots \circ E(\tilde{k})$ denote
the set of paths from $V (k)$ to $V (\tilde{k})$. Specifically, $E(k+1)\circ \ldots \circ E(\tilde{k})$ denotes the following set:
\begin{center}
$\{(e_{k+1},\ldots,e_{\tilde{k}}): e_i\in E(i), \textrm{ } k+1\leq i\leq \tilde{k}, \textrm{ } r(e_i)=s(e_{i+1}), \textrm{ }k+1\leq i\leq \tilde{k}-1\}$.
\end{center}
The incidence matrix of $E(k+1)\circ\ldots\circ E(\tilde{k})$ is the product $M(\tilde{k})\cdot\ldots\cdot M(k+1)$. We define $r(e_{k+1},\ldots,e_{\tilde{k}}):=r(e_{\tilde{k}})$ and $s(e_{k+1},\ldots,e_{\tilde{k}}):=s(e_{k+1})$.

In this paper, we will assume that $(V, E)$ is a \emph{simple Bratteli diagram}, i.e. for each nonnegative integer $k$, there exists an integer $\tilde{k} > k$ such that the product $M(\tilde{k})\cdot\ldots\cdot M(k+1)$ have only nonzero entries.

\subsection{Ordered Bratteli diagrams}

An ordered Bratteli diagram $(V, E,\geq)$ is a Bratteli diagram $(V, E)$ together with a partial order $\geq$ on $E$ such that edges $e,e' \in E$ are comparable if and only if $r(e) = r(e')$, in other words, we have a linear order on the set $r^{-1}(\{v\})$ for each $v \in V \setminus V(0)$.

\begin{remark} \label{rem:edgenot}
Edges in an ordered Bratteli diagram $(V, E,\geq)$ are uniquely determined by a four dimensional vector $e = (k,s,m,r)$, where $k$ means that $e \in E(k)$, $s=s(e)$ and $r=r(e)$ are the source and range of $e$ as previously defined and $m \in \mathbb{Z}_+$ is the order index means that $e=e^m \in r^{-1}(r(e)) = \{ e^0 < e^1 < ... < e^{r-1} \}$. Usually we will write $e = e_k = (s,m,r)$ carrying the level index $k$ as a subscript or suppressing it when there is no doubt about the level. 
\end{remark}

Note that if $(V,E,\geq)$ is an ordered Bratteli diagram and $k < \tilde{k}$ in $\mathbb{Z}_+$, then the set $E(k+1)\circ E(k+2)\circ\ldots\circ E(\tilde{k})$ of paths from $V(k)$ to $V(\tilde{k})$ may be given an induced order as follows:

\begin{center}
$(e_{k+1},e_{k+2},\ldots,e_{\tilde{k}}) > (e'_{k+1},e'_{k+2},\ldots,e'_{\tilde{k}})$
\end{center}
if and only if for some $i$ with $k+1\leq i\leq \tilde{k}$, $e_i > e'_i$ and $e_j =e'_j$ for $i < j \leq \tilde{k}$. 

A Bratteli diagram $(V,E)$ is \emph{stationary} if there exists $l$ such that $l =\#V (k)$ for all $k$, and (by an appropriate relabelling of the vertices if necessary) the incidence matrices between level $k$ and $k + 1$ are the same $l \times l$ matrix $M$ for all $k \ge 1$. In other words, beyond level $1$ the diagram repeats itself. 
An ordered Bratteli diagram $B = (V, E, \geq)$ is \emph{stationary} if $(V, E)$ is stationary, and the ordering on the edges with range $(k,i)$ is the same as the ordering on the edges with range $(\tilde{k},i)$ for $k,\tilde{k} \ge 2$ and $i = 1,\ldots ,l$, i.e. beyond level $1$ the diagram with the ordering repeats itself.
Furthermore, we say that $\ge$ is a \emph{consecutive ordering} if for all edges $e \le f \le e'$ with $s(e) = s(e')$ we have $s(f) = s(e) = s(e')$. To every ordered Bratteli diagram with consecutive ordering $B = (V, E, \geq)$ we associate a sequence of matrices $(Q(k))_{k\ge 1}$ called the \emph{ordering matrices} such that
\begin{enumerate}
\item[(i)] $Q(k)$ is a $(l(k)) \times (l(k-1))$ matrix;
\item[(ii)] $Q(k)_{i,j} = 0$ if and only if $M(k)_{i,j} = 0$;
\item[(iii)] The nonzero entries in each row $i$ of $Q(k)$ form a permutation in $\#\{ j : M(k)_{i,j} > 0 \}$ letters. So the row $i$ in $Q(k)$ indicates how edges inciding on vertex $i \in V(k)$ are ordered with respect to its sources in $V(k-1)$.   
\end{enumerate} 
The consecutive ordering is said to be \emph{canonical} if each row of $Q(k)$, $k\ge 1$, the permutation in $\#\{ j : M(k)_{i,j} > 0 \}$ letters is the identity.

For a stationary ordered Bratteli diagram, the consecutive ordering is also stationary, i.e $Q = Q(k)$ for every $k$. As an example consider a stationary ordered Bratteli diagram with $l=2$ and incidence matrix
$$
M=\left(\begin{array}{cc} a & b \\ c & 0 \end{array}\right) \, ,
$$
\noindent with $abc>0$. We have two possible consecutive orderings relative to the ordering matrices
$$
\left(\begin{array}{cc} 1 & 2 \\ 1 & 0 \end{array}\right) \qquad or \qquad 
\left(\begin{array}{cc} 2 & 1 \\ 1 & 0 \end{array}\right) \, ,
$$
where the first one is associated to the canonical consecutive ordering.

\subsection{The Vershik map}

Let $B = (V,E,\geq)$ be an ordered Bratteli diagram. Let $X_B$ denote the associated infinite path space, i.e.

\begin{center}
$X_B = \{(e_1,e_2,\ldots): e_i\in E(i) \textrm{ and } r(e_i) = s(e_{i+1}), \textrm{ for all } i\geq 1\}$ .
\end{center}

Under the hypotheses of the definition of a Bratteli diagram, $X_B$ is nonempty. However $X_B$ can be a finite set, this only occurs in trivial cases and do not occur for general classes of Bratteli diagrams as for instance simple Bratteli diagrams with $\# E(k) >1$ for infinitely many $k\ge 1$. Hence we require that $X_B$ is infinite for all Bratteli diagrams considered here.

We endow $X_B$ with a topology such that a basis of open sets is given by the family of cylinder sets
\begin{center}
$[e_1,e_2,\ldots,e_k]_B = \{(f_1,f_2,\ldots)\in X_B : f_i = e_i, \textrm{ for all } 1\leq i\leq k\}$ .
\end{center}
Each $[e_1,\ldots,e_k]$ is also closed, as is easily seen. Endowed with this topology, we call $X_B$ the Bratteli compactum associated with $B = (V,E,\geq)$. Let $d_B$ be the distance on $X_B$ defined by $d_B((e_j)_j ,(f_j )_j )= \frac{1}{2^k}$ where $k = \inf\{i\geq 1: e_i\neq f_i\}$. The topology of the cylinder sets coincide with the topology induced by $d_B$.

If $(V,E)$ is a simple Bratteli diagram, then $X_B$ has no isolated points, and so is a Cantor space (see \cite{m}).

Two paths in $X_B$ are said to be \emph{cofinal} if they have the same tails, i.e. the edges agree from a certain level.

Let $x = (e_1,e_2,\ldots)$ be an element of $X_B$. We will call $e_k = e_k(x)$ the k-th label of $x$. Recall from Remark \ref{rem:edgenot} that $e_k = (s_k,m_k,r_k)$ such that $r_k = s_{k+1} \in V(k)$ for every $k\ge 1$.  We let $X_B^{max}$ denote those elements $x$ of $X_B$ such that $e_k(x)$ is a maximal edge for all $k$ and $X_B^{min}$ the analogous set for the minimal edges. It is clear that from any vertex at level $k$ there is an upward maximal path to level $0$, using this we have that $X_B^{max}$ is the intersection of nonempty compact sets, so it is nonempty. Analogously $X_B^{min}$ is nonempty.

From now on we denote
$$
X^0_B := X_B\setminus{X_B^{\max}} \, .
$$

If $B=(V,E,\geq)$ is an ordered Bratteli diagram then it is easy to check that every infinite path $x\in X^0_B$ has an unique \emph{successor}. Indeed let $x = (e_1,e_2,...)\in X^0_B$ and $\zeta(x)$ be the smallest number such that $e_\zeta$ is not a maximal edge. Let $f_\zeta = f_\zeta(x)$ be the successor of $e_\zeta$ (and so $r(e_\zeta) = r(f_\zeta)$). Then the successor of $x$ is $V_B(x) = y = (f_1,\ldots,f_{\zeta-1},f_\zeta,e_{\zeta+1},...)$, where $(f_1,\ldots,f_{\zeta-1}) = (f_1(x),\ldots,f_{\zeta-1}(x)) $ is the minimal path in $E(1)\circ E(2)\circ\ldots\circ E(\zeta-1)$ with range equal to $s(f_\zeta)$, i.e. $r(f_1,\ldots,f_{\zeta-1})=s(f_\zeta)$.
Thus, it is convenient to define the \emph{Vershik map} $V_B : X^0_B \longrightarrow X_B$ that associates to each $x \in X^0_B$ its successor. The resulting pair $(X_B , V_B)$ is called \emph{Bratteli-Vershik dynamical system}.

\section{The Bratteli-Vershik process and stochastic machine}
\label{sec:BVP}

Here we will define the BV process but we need to introduce some new notation before it. 

Let $B = (V, E, \geq)$ be an ordered Bratteli diagram.

Recall the definition of $\zeta(x)$, for $x \in X^0_B$, from the previous section and define 
$$A(x)=\{1\leq i< \zeta(x):e_i(x) \textrm{ is not a minimal edge}\}.$$ 
Put $\theta(x) = \# A(x)$ and write $A(x)=\{k_{x,1},\ldots,k_{x,\theta(x)}\}$, where $k_{x,i-1}<k_{x,i}$, for all $i\in\{2,\ldots,\theta(x)\}$. 

Since for $k \in A(x)$ we have that  $e_k(x)$ is a maximal edge of $x$ which is not minimal which implies that $e_k(x)$ is not the only edge arriving at $r(e_k(x))$. Thus if $\# r^{-1}(v) > 1$ for every $v \in V \setminus  \{v_0\}$ or equivalently the sum of each row in each incidence matrix is greater than one, then we have that $\theta(x) = \zeta(x) - 1$ and $A(x)=\{1,\ldots,\zeta(x)-1\}$. So we have

\begin{center}
\textbf{Hypothesis A:} For the ordered Bratteli diagram $B = (V, E, \geq)$, the sum of each row in each incidence matrix is greater than one.
\end{center}

For each $j\in \{1,\ldots,\theta(x)\}$, let $y_{j}(x) \in X^0_B$ be defined as
\begin{equation}
y_{j}(x)=(f_1^{(j)},\ldots, f_{k_{x,j}}^{(j)},e_{k_{x,j}+1},e_{k_{x,j}+2},\ldots), \label{yjx}
\end{equation}
\noindent where $(f_1^{(j)},\ldots,f_{k_{x,j}}^{(j)})$ is the minimal edge in $E(1)\circ\ldots\circ E(k_{x,j})$ with range equal to $s(e_{k_{x,j}+1})$, for each $j\in\{1,\ldots, \theta(x)\}$.

First we need to adjust the space where the BV process will be defined. This is due to the fact that the successor of $x \in X^0_B$ can be an element of $X_B^{\max}$. To avoid this we define $\widehat{X}_B^{max}$ as the set of points $x \in X_B$ that are cofinal with a point on $X_B^{\max}$. Set
$$
\widehat{X}_B := X_B\setminus{\widehat{X}_B^{\max}} \, .
$$
Note that if $x \in \widehat{X}_B$ then $V_B(x) \in \widehat{X}_B$. Moreover $V_B$ restricted to $\widehat{X}_B$ is one to one from $\widehat{X}_B$ to $\widehat{X}_B \setminus X_B^{\min}$.

\begin{Definition} Let $(p_i)_{i\geq 1}$ be a sequence of nonnull probabilities and $B = (V, E, \geq)$ an ordered Bratteli diagram. The \emph{Bratteli-Vershik Process} is a discrete time-homogeneous Markov Process $(\Gamma_n)_{n\ge 0}$ with state space $\widehat{X}_B$ defined as
$$
\Gamma_n = \widehat{V}^{(n)}_B (\Gamma_0) \, ,
$$
where $\widehat{V}^{(n)}_B$ is the n-th iteration of $\widehat{V}_B : \widehat{X}_B \rightarrow \widehat{X}_B$ called the \emph{random Vershik map} and defined as
$$
\widehat{V}_B(x)=\left\{\begin{array}{cl}
y_{j}(x), & \textrm{ with probability } p_{k_{x,1}} \ldots p_{k_{x,j}}(1-p_{k_{x,j+1}}), \\
          &  \qquad \qquad \qquad \qquad \qquad \qquad \textrm{ for each } j\in \{1,\ldots,\theta(x)-1\}; \\
y_{\theta(x)}(x), & \textrm{ with probability } p_{k_{x,1}} \ldots p_{k_{x,\theta(x)}}(1-p_{\zeta(x)}), \\
x, & \textrm{ with probability } 1-p_{k_{x,1}}; \\
V_B(x), & \textrm{ with probability } p_{k_{x,1}} \ldots p_{k_{x,\theta(x)}} p_{\zeta(x)}.
\end{array}\right.
$$ \label{defvbs}
\end{Definition}

Thus the transition probabilities of the BV process are determined by the random Vershik map. The idea behind the definition is the use of a basic algorithm to obtain $V_B(x)$ from $x$ by recursively choosing the minimum path from level $0$ to level $k$ for $1 \le k \le \zeta(x)-1$ and then at step $\zeta(x)$ we finally obtain $V_B(x)$. Then we impose the rule that step $j$ of the algorithm is performed with probability $p_j$ independently of any other step. This transition mechanism is connected to the stochastic adding machines discussed in Section \ref{sec:intro} and our next aim is to define the BV stochastic adding machine.

\begin{remark} \label{remark:condA}
Under Hypothesis A we have that 
$$
\widehat{V}_B(x)=\left\{\begin{array}{cl}
y_{j}(x), & \textrm{ with probability } p_{1} \ldots p_{j}(1-p_{j+1}), \\
          & \qquad \qquad \qquad \qquad \qquad \qquad \textrm{ for each } j\in \{1,\ldots,\zeta(x)-1\}; \\
x, & \textrm{ with probability } 1-p_{1}; \\
V_B(x), & \textrm{ with probability } p_{1} \ldots p_{\zeta(x)-1} p_{\zeta(x)}.
\end{array}\right.
$$
\end{remark}

Take $x_0 \in \widehat{X}_B \cap X_B^{\min}$ and define $\widetilde{X}_B^{x_0} : = \{x_0\} \cup \{ V_B^{(n)}(x_0) : n \ge 1 \}$. Clearly we have a bijection between $\widetilde{X}_B^{x_0}$ and the set of nonnegative integers $\mathbb{Z}_+$ where $x_0 \mapsto 0$ and $V_B^{(n)}(x_{0})\mapsto n$ for all $n\geq 1$. Using the fact that $x_0 \in  X_B^{\min}$, it is also straightforward to verify that for every $x \in \widetilde{X}_B^{x_0}$ we have $\widehat{V}_B(x) \in \widetilde{X}_B^{x_0}$ with probability one.

To simplify the notation, we put $x_n := V_B^{(n)}(x_0)$ and then $\widetilde{X}_B^{x_0} = \{ x_0, x_1, x_2, ... \}$.

\begin{Definition} Let $(p_i)_{i\geq 1}$ be a sequence of nonnull probabilities, $B = (V, E, \geq)$ be an ordered Bratteli diagram and $x_0 \in \widehat{X}_B \cap X_B^{\min}$. The \emph{Bratteli-Vershik stochastic adding machine} associated to them is the discrete time-homogeneous Markov chain $(Y_n)_{n\ge 0}$ on $\widetilde{X}_B^{x_0}$ defined as $Y_n = \Gamma_n$ for $n\ge 1$ given that $Y_0 = x_0$. 
\end{Definition}

Let $(Y_n)_{n\ge 0}$ be a BV stochastic adding machine, we will denote the transition matrix of $(Y_n)_{n\ge 0}$ by $S=(S_{m,n})_{m,n\in\mathbb{N}}$, i.e 
\begin{equation} \label{defoperador}
S_{m,n} := S(x_n,x_m) := P(Y_1 = x_n |Y_0 = x_m).
\end{equation}

When $X_B^{\min} = \{ x_{\min}\}$ is a unit set, there is a unique BV stochastic adding machine associated to $B$ and a given sequence $(p_i)_{i\ge 1}$. This stochastic machine is the main object of study in this paper. To simplify notation we write $\widetilde{X}_B^{x_{\min}} = \widetilde{X}_B$. The hypothesis $X_B^{\min} = \{ x_{\min}\}$ is a natural one and occurs when the level sets $V_k$ are ordered and the order on the edges is endowed by the order on its source level sets. 

\begin{Example} (The Cantor systems of numeration case) \label{example:cantor} 
	
	Consider the ordered Bratteli diagram $B$ represented by the sequence of $1\times 1$ matrices $M_j=(d_j)$ for a sequence $d_j\ge 2$ for every $j\ge 1$. In this case we have a unique ordering which is the canonical consecutive ordering. Moreover Hypothesis A is clearly satisfied. In this case, $X^{min}_B$ is a unit set and given $(d_j)_{j\ge 1}$ and $(p_j)_{j\ge 1}$ there is a unique associated BV stochastic adding machine. The stochastic adding machines associated to the Cantor systems of numeration were introduced by Messaoudi and Valle \cite{mv}.
	
	For instance consider $d_j = 2j$, for all $j\geq 1$. Let $x=(e_1,e_2,e_3,e_4,\ldots)\in \widetilde{X}_B$, where $e_1=(1,1,1)$, $e_2=(1,3,1)$ and $e_3=(1,4,1)$. A representation of the path $(e_1,e_2,e_3)$ in the diagram is presented in item $(a)$ of Figure \ref{exemplodn}. Here we have $\zeta(x)=3$, because $e_1$ and $e_2$ are maximal edges and $e_3$ is not maximal. Thus $V_B(x)=(f_1,f_2,f_3,e_4,e_5,\ldots)$ where $f_1 = (1,0,1)$, $f_2=(1,0,1)$ and $f_3=(1,5,1)$. (see the item $b)$ of Figure \ref{exemplodn}). Moreover, we have $A(x)=\{1,2\}$ and $y_{1}(x)=(f_1,e_2,e_3,e_4,\ldots)$ and $y_{2}(x)=(f_1,f_2,e_3,e_4,\ldots)$ (see the items $(c)$ and $(d)$ of Figure \ref{exemplodn}, respectively). We have that $x$ transitions to $V_B(x)$ with probability $p_1p_2p_3$, $x$ transitions to $x$ with probability $1-p_1$, $x$ transitions to $y_{1}(x)$ with probability $p_1(1-p_2)$ and $x$ transitions to $y_{2}(x)$ with probability $p_1p_2(1-p_3)$. The initial parts of the transition graph and matrix for the chain are represented in Figure \ref{grafotrans2j}.
	
	\begin{figure}[!h]
		\centering
		\includegraphics[scale=0.46]{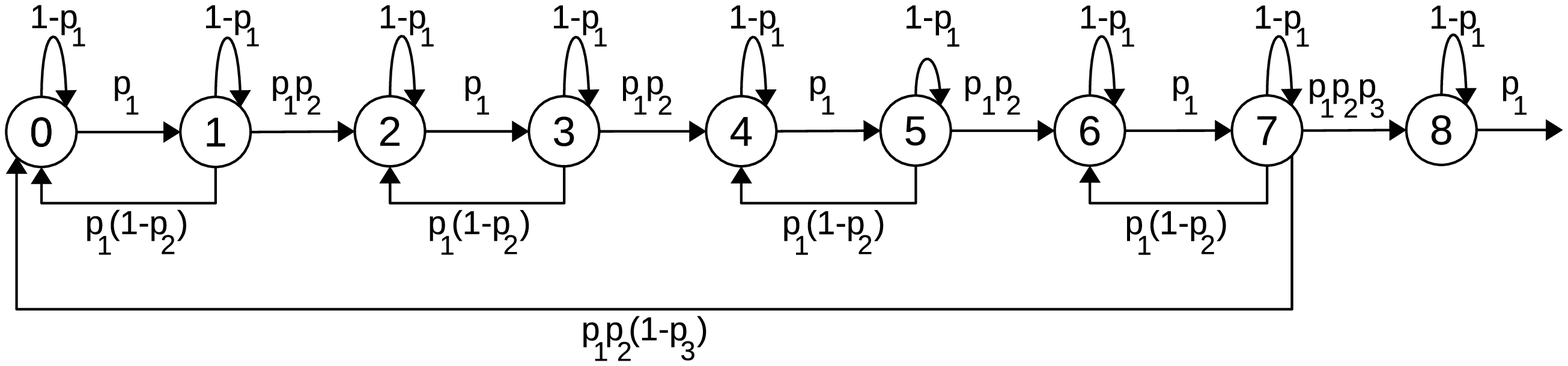}
		\includegraphics[scale=0.8]{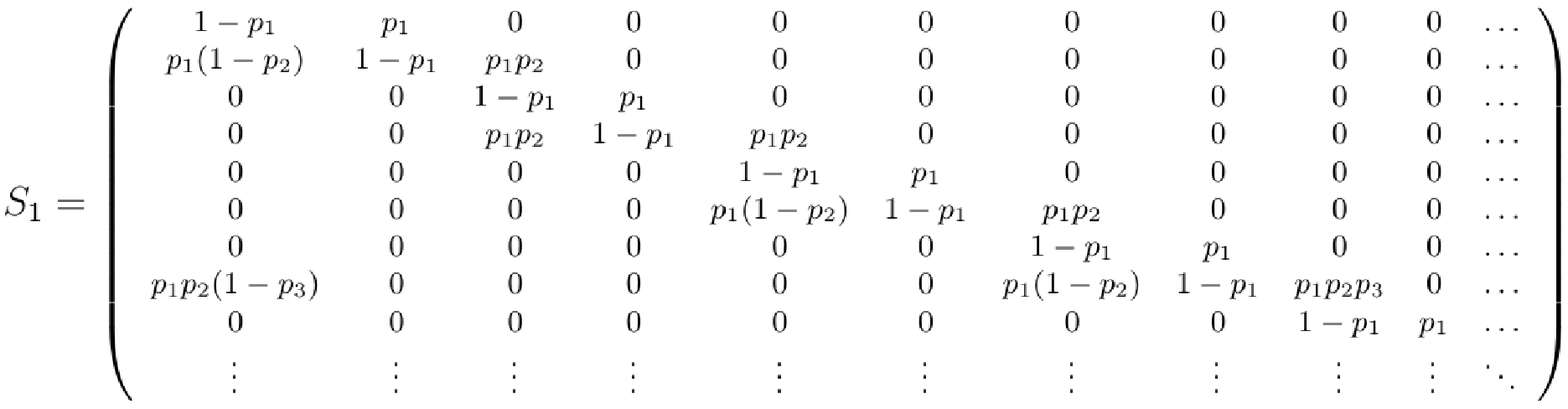}
		\caption{Initial parts of the transition graph and matrix of the BV stochastic adding machine with incidence matrices $M_j=(d_j)$ where $d_1 = 2$, $d_2 = 4$ and $d_3 = 6$.}
		\label{grafotrans2j}
	\end{figure}
	
\begin{figure}[!h]
		\centering
		\includegraphics[scale=0.8]{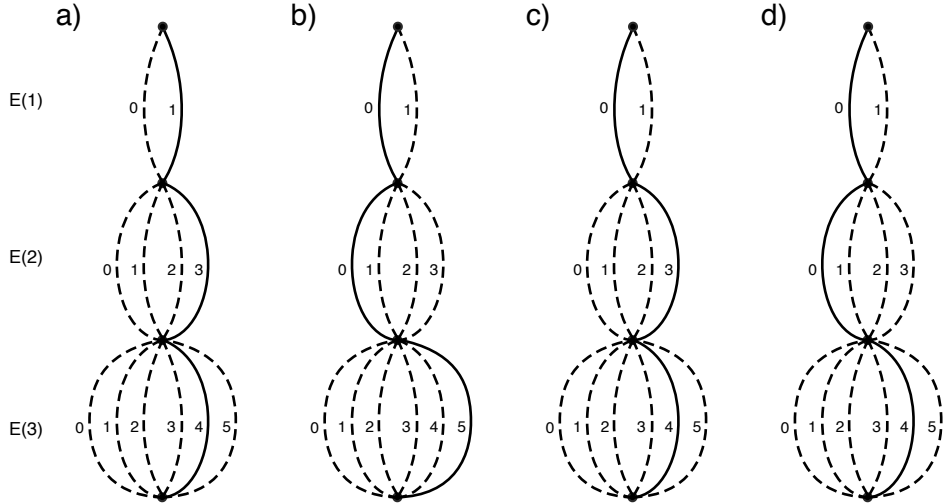}
		\caption{Representation paths in a Bratteli diagram with incidence matrices $M_j=(d_j)$ where $j\geq 1$, $d_1 = 2$, $d_2 = 4$ and $d_3 = 6$.}
		\label{exemplodn}
	\end{figure}
\end{Example}

\begin{remark}
In Example \ref{example:cantor}, if $d_j=2$ for all $j\geq 1$, then we obtain the stochastic adding machine defined by Killeen and Taylor \cite{kt1}.
\end{remark}

\begin{Example} \label{exemplo24}
	Consider $B$ as the stationary Bratteli diagram with consecutive ordering and incidence matrix $M_1=\left(\begin{array}{cc} 2 & 1 \\ 3 & 1 \end{array}\right)$. This diagram satisfies Hypothesis A. 
	
	Let $x=(e_1,e_2,e_3,e_4,e_5,\ldots)\in X_B$ be an infinite path, where $e_1=(2,3,2)$, $e_2=(2,2,1)$, $e_3=(1,1,2)$, $e_4=(2,2,1)$ and $e_j = (1,0,1)$ for $j\ge 5$. The representation of $x$ in the diagram is given by the path in item $(a)$ of Figure \ref{exemplo}.
	
	Here we have $\zeta(x)=3$ and $V_B(x)=(f_1,f_2,f_3,e_4,e_5,\ldots)$ where $f_1 = (1,0,1)$, $f_2 = (1,0,1)$ and $f_3=(1,2,2)$. (see item $(b)$ of Figure \ref{exemplo}).
	
	Moreover, we have $A(x)=\{1,2\}$ and $y_{1}(x)=((1,0,2),e_2,e_3,\ldots)$ and $y_{2}(x)=((1,0,1),(1,0,1),e_3,e_4,\ldots)$ (see the $(c)$ and $(d)$ of Figure \ref{exemplo}, respectively).
	
	Hence, we have that $x$ is mapped to $V_B(x)$ with probability $p_1p_2p_3$, $x$ is mapped to $x$ with probability $1-p_1$, $x$ is mapped to $y_{1}(x)$ with probability $p_1(1-p_2)$ and $x$ is mapped to $y_{2}(x)$ with probability $p_1p_2(1-p_3)$.
	
	Thus, its transition graph and transition operator are represented in Figure \ref{grafotrans2131}.
	
	\begin{figure}[!h]
		\centering
		\includegraphics[scale=0.46]{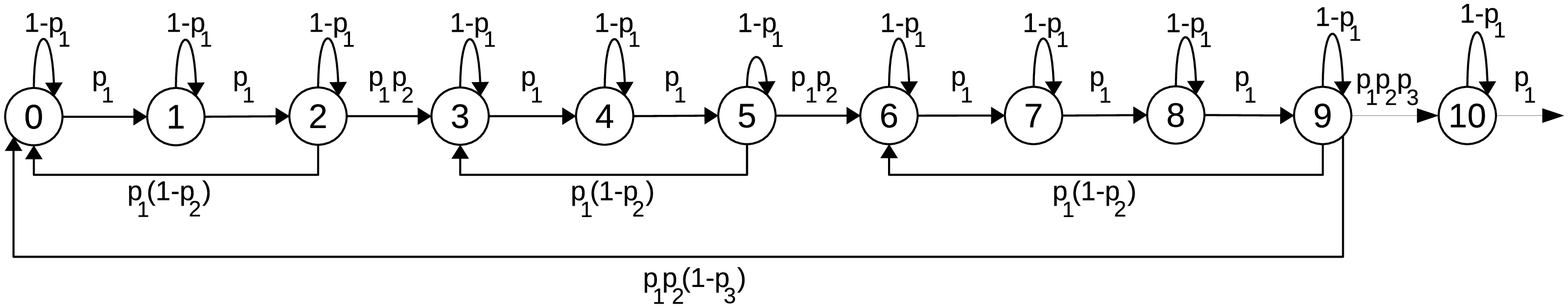}
		\includegraphics[scale=0.8]{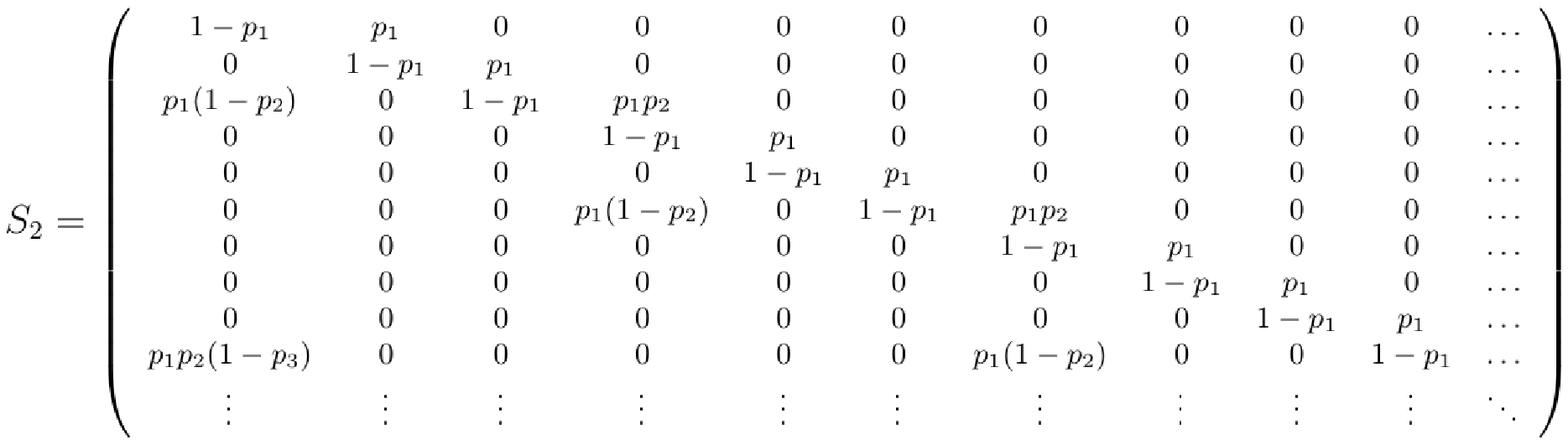}
\caption{Initial parts of the transition graph and matrix of the BV stochastic adding machine associated with a stationary Bratteli diagram with incidence matrix $M_1$.}
		\label{grafotrans2131}
	\end{figure}
	
\begin{figure}[h!]
		\centering
		\includegraphics[scale=0.8]{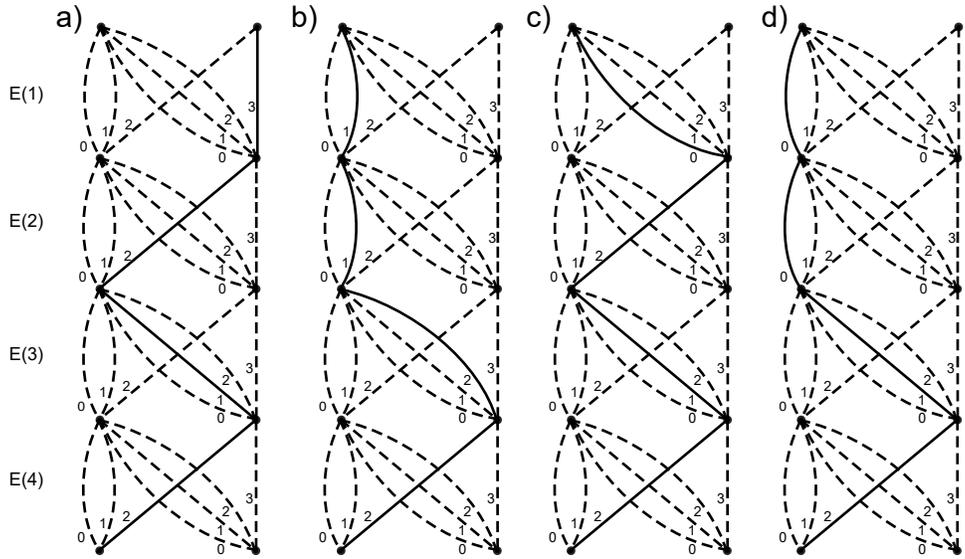}
\caption{Representation of paths in a stationary Bratteli diagram with incidence matrix $M_1$.}\label{exemplo}
	\end{figure}
\end{Example}

\begin{Example} \label{example:fibo}
	(The Fibonacci case) 
	
	Consider the stationary ordered Bratteli diagram $B$ with the canonical consecutive ordering and incidence matrix $M_F=\left(\begin{array}{cc} 1 & 1 \\ 1 & 0 \end{array}\right)$. In this case $B$ does not satisfy Hypothesis A. Again $X^{min}_B$ is unitary and given $(p_j)_{j\ge 1}$ there is a unique associated BV stochastic adding machine. These stochastic adding machines is associated with the Fibonacci system of numeration and have been introduced in \cite{ms}
	
	Let $x=(e_1,e_2,e_3,e_4,\ldots)\in X_B$ be an infinite path in the Bratteli diagram, where $e_1=(2,1,1)$, $e_2=(1,0,2)$, $e_3=(2,1,1)$, and $e_j = (1,0,1)$ for all $j\ge 4$. The representation of $x$ in the diagram is given by the continuous path in item $(a)$ of Figure \ref{diagramfibonacci}. We have $\zeta(x)=4$ and $V_B(x)=(f_1,f_2,f_3,f_4,e_5,\ldots)$ where $f_4=(2,1,1)$ and $(f_1,f_2,f_3)$ is the minimal edge in $E(1)\circ E(2)\circ E(3)$ with range equal to $s(f_4)$. (see the item $(b)$ of Figure \ref{diagramfibonacci}). 
	
	We have $A(x)=\{1,3\}=\{n_1,n_2\}$ and $y_{n_1}(x)=((1,0,1),(1,0,2),(2,1,1),e_4,\ldots)$ and $y_{n_2}(x)=((1,0,1),(1,0,1),(1,0,1),e_4,\ldots)$ (see the items $(c)$ and $(d)$ of Figure \ref{diagramfibonacci}, respectively).
	
	Hence, we have that $x$ transitions to $V_B(x)$ with probability $p_1p_2p_3$, $x$ transitions to $x$ with probability $1-p_1$, $x$ transitions to $y_{n_1}(x)$ with probability $p_1(1-p_2)$ and $x$ transitions to $y_{n_2}(x)$ with probability $p_1p_2(1-p_3)$.
	
	\begin{figure}[!h]
		\centering
		\includegraphics[scale=0.8]{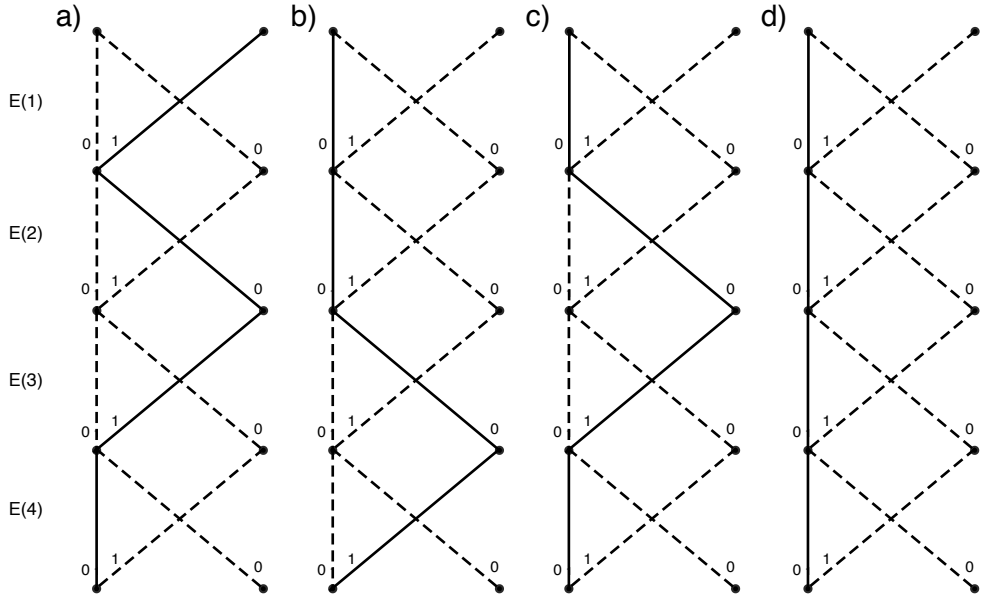}
\caption{Representation of paths in a stationary Bratteli diagram with incidence matrix $M_F$.}
		\label{diagramfibonacci}
	\end{figure}
	\end{Example}

\begin{remark} \label{remark:uniq}
Two distinct ordered Bratteli diagrams can generate the same stochastic adding machine. For instance consider two stationary ordered Bratteli diagrams with consecutive ordering and incidence matrices $M=(2)$ and $M^\prime =\left(\begin{array}{cc} 1 & 1 \\ 1 & 1 \end{array}\right)$. Both diagrams generate a unique BV stochastic adding machine that corresponds to the stochastic machine studied by Killeen and Taylor in \cite{kt1}.
\end{remark}

Before we discuss the probabilistic properties of the BV stochastic adding machines, we present some basic definitions from the theory of Markov chains and we recommend \cite{bre} to the unfamiliar reader. Let $Y = (Y_n)_{n\geq 0}$ be a Markov Chain on a probability space $(\Omega, \mathcal{O},P)$. We denote by $\mathbb{E}[\cdot]$ the expectation with respect to $P$. We say that $Y$ is irreducible if 
for any pair of states $i$ and $j$ there exists $m \ge 1$ such that
$$
P(Y_m = j | Y_0 = i ) \, >0.
$$
An irreducible Markov chain $Y$ is transient if every state $i$ is transient, i.e. 
\begin{center}
$P(Y_n=i \textrm{ for some } n |Y_0=i) < 1$,
\end{center}
If an irreducible Markov chain is not transient we say that it is recurrent and this means that every state $i$ is recurrent, i.e.
$$
P(Y_n=i \textrm{ for some } n |Y_0=i) = 1.
$$
Furthermore, a recurrent Markov chain is called positive recurrent if for each state $i$, the expected return time $m_i = \mathbb{E}[R_i|Y_0=i] < \infty$, where $R_i=\min\{n\geq 1: Y_n=i\}$.
Otherwise, if $m_i=+\infty$, then the Markov chain is called null recurrent.

\begin{Proposition}
\label{prop:irredutivel}
Let $(p_i)_{i\geq 1}$ be a sequence of nonnull probabilities such that $\# \{ i: p_i < 1 \} = \infty$. Every BV stochastic adding machine associated to $(p_i)_{i\geq 1}$ is an irreducible Markov chain. Furthermore the stochastic machine is transient if and only if $\prod_{j=1}^{\infty} p_j > 0$. 
\end{Proposition}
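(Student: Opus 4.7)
The plan is to verify irreducibility directly and then to characterize transience via an independent-coin representation together with a telescoping recursion for hitting probabilities.

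\textbf{Irreducibility.} Since every $p_i>0$, from any state $x_m$ the one-step transition $x_m\mapsto V_B(x_m)=x_{m+1}$ has positive probability $p_{k_{x_m,1}}\cdots p_{\zeta(x_m)}$, so by iteration $x_m$ reaches every $x_n$ with $n>m$ with positive probability. For the backward direction I would exhibit, for each $m\ge 1$, a ``maximal-prefix'' state $z^{(K)}\in\widetilde{X}_B^{x_0}$ and a positive-probability one-step transition $z^{(K)}\to x_0$. Using $\#\{i:p_i<1\}=\infty$, pick $K$ large with $p_{K+1}<1$ and let $z^{(K)}$ be the unique path whose first $K$ edges form the maximal path ending at $r(e_K(x_0))$ and whose tail from level $K+1$ coincides with that of $x_0$. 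Then $z^{(K)}$ is cofinal with $x_0$ and not in $\widehat{X}_B^{\max}$, so $z^{(K)}\in\widetilde{X}_B^{x_0}$. By construction $y_{\theta(z^{(K)})}(z^{(K)})=x_0$, and this transition has positive probability $p_{k_{z^{(K)},1}}\cdots p_{k_{z^{(K)},\theta(z^{(K)})}}(1-p_{K+1})>0$. Choosing $K$ so that the index of $z^{(K)}$ exceeds $m$ yields a positive-probability path $x_m\to z^{(K)}\to x_0$.

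\textbf{Transience characterization.} I would realize the dynamics on a common probability space via independent coins $B_{n,j}\sim\mathrm{Bern}(p_j)$, jointly independent in $n$ and $j$, setting $T_n:=\min\{j:B_{n,j}=0\}$. At time $n$ the chain makes a $+1$ move $Y_n\mapsto Y_n+1$ iff $T_n>\zeta(Y_n)$; otherwise $Y_{n+1}$ is $Y_n$ (if $T_n=1$) or the state $y_{T_n-1}(Y_n)$ of strictly smaller index. The events $\{T_n=\infty\}$ are independent with probability $q:=\prod_j p_j$. The tail-preserving property of $y_j$ leads, by conditioning on the first move out of $z^{(K)}$, to the telescoping recursion
\[
1-\phi(z^{(K)}) = p_{K+1}\bigl(1-\phi(z^{(K+1)})\bigr),
\]
where $\phi(x):=P_x(\text{hit }x_0)$. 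Iterating and passing to the limit gives
\[
1-\phi(z^{(1)}) = \Bigl(\prod_{j\ge 2} p_j\Bigr)(1-\phi^{\ast}),
\]
where $\phi^{\ast}:=\lim_K\phi(z^{(K)})\in[0,1]$. If $q=0$ the right-hand side vanishes, so $\phi(z^{(1)})=1$ and irreducibility propagates this to $\phi\equiv 1$, i.e.\ recurrence. If $q>0$, an application of the second Borel--Cantelli lemma to the independent events $\{T_n=\infty\}$ (infinitely many guaranteed $+1$ moves a.s.) combined with control over the intervening downward excursions yields $\phi^{\ast}<1$, so $\phi(z^{(1)})<1$ and the chain is transient.

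The main obstacle is establishing $\phi^{\ast}<1$ when $q>0$: downward jumps have unbounded magnitude, so one cannot compare directly with a nearest-neighbor walk. The strategy is to combine the positive density $q$ of guaranteed $+1$ moves coming from the independent events $\{T_n=\infty\}$ with the tail-preserving structure of the $y_j$ transitions (which prevents the chain from ``unwinding'' itself faster than the $+1$ moves build up) to obtain a uniform-in-$K$ lower bound on the escape probability from $z^{(K)}$.
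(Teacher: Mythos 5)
Your irreducibility argument and your identification of the ``maximal-prefix'' states $z^{(K)}$ coincide with the paper's proof (these are the states the paper calls $x_{n_j}$), and your one-step recursion $1-\phi(z^{(K)})=p_{K+1}\bigl(1-\phi(z^{(K+1)})\bigr)$ is exactly the content of the paper's structural observations: only the states $z^{(K)}$ can jump to $x_0$; once past $z^{(K)}$ the chain cannot revisit it without first visiting $x_0$; and, conditioned on the terminal decision at $z^{(K)}$, the chain advances with probability $p_{K+1}$. With that recursion the implication ``$\prod_j p_j=0\Rightarrow$ recurrence'' is complete, since $1-\phi(z^{(1)})\le\prod_{j=2}^{M}p_j\to 0$.

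The genuine gap is in the converse direction. Your recursion only yields $1-\phi(z^{(1)})=\bigl(\prod_{j\ge 2}p_j\bigr)\bigl(1-\phi^{\ast}\bigr)$, which is circular unless one independently shows $\phi^{\ast}<1$, and the route you sketch for this does not work. The second Borel--Cantelli lemma applied to $\{T_n=\infty\}$ gives infinitely many guaranteed forward moves almost surely whenever $q>0$, but this event is compatible with returning to $x_0$ (indeed the chain returns with probability at least $1-p_1>0$ even when $q>0$), so no amount of ``control over downward excursions'' extracted from Borel--Cantelli alone can separate transience from recurrence; the dichotomy is not about the density of forward moves but about whether the chain ever fails at a gate. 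The correct completion, and what the paper does, is to compute $\phi(z^{(K)})$ exactly by decomposing the hitting event of $x_0$ according to the first gate at which the chain fails: by the strong Markov property and your own properties of the gates,
\[
\phi\bigl(z^{(K)}\bigr)=\sum_{m\ge K}\Bigl(\prod_{j=K+1}^{m}p_j\Bigr)\bigl(1-p_{m+1}\bigr)=1-\prod_{j>K}p_j ,
\]
so $\phi^{\ast}=\lim_K\bigl(1-\prod_{j>K}p_j\bigr)=0$ when $q>0$, and the escape probability from $x_0$ is $\prod_{j\ge 1}p_j$. This exact evaluation uses nothing beyond the gate properties you already invoked, so the coin coupling and Borel--Cantelli machinery can be dropped entirely. (A minor additional caveat: your coupling $Y_{n+1}=y_{T_n-1}(Y_n)$ with coins indexed by $1,2,\dots$ reproduces the transition probabilities of Remark 3.4 only under Hypothesis A; for a general BV machine the coins must be indexed by the set $A(Y_n)$, though this does not affect the gate analysis.)
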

\begin{proof}
Let $(Y_n)_{n\ge 0}$ be a BV stochastic adding machine associated to $(p_i)_{i\geq 1}$, an ordered Bratteli diagram $B = (V, E, \geq)$ and $x_0 \in \widehat{X}_B \cap X_B^{\min}$. 

We have some special states $x_{n_1}$, $x_{n_2}$, ..., which are cofinal to $x_0$ by hypothesis, determined by the following:  
$e_k (x_{n_j}) = e_k (x_0)$ for $k\ge j+1$ and $(e_1 (x_{n_j}),...,e_j (x_{n_j}))$ is the maximal edge in $E(1) \circ ... \circ E(j)$ with range equal so $s(e_{j+1} (x_0))$.

Concerning irreducibility, we just point out that 
\begin{enumerate} 
\item[(i)] for every $n$ the chain can reach $x_n$ with positive probability by making the transitions $x_0 \mapsto x_1$, $x_0 \mapsto x_1$, ..., $x_{n-1} \mapsto x_n$;
\item[(ii)] for $j+1 \in \{ i: p_i < 1 \}$, we can make the transition $x_{n_j} \mapsto x_0$ with probability $(1-p_{j+1}) \prod_{i=1}^j p_j > 0$.
\end{enumerate}
By (i) and (ii), it is clear that $(Y_n)_{n\ge 0}$ is irreducible. 

\smallskip

Now we consider the transience/recurrence of the chain. We rely on some additional properties of the chain related to the special states $x_{n_j}$, $j\ge 1$. We have
\begin{enumerate} 
\item[(iii)] Once the chain arrives at $x_{n_j +1}$, the successor of $x_{n_j}$, it can only visit $x_{n_j}$ again if it visits $x_0$ first. 
\item[(iv)] If transition $x \mapsto x_0$ is possible with positive probability, then $x = x_{n_j}$. 
\item[(v)] Given that a transition from $x_{n_j}$ to $x_{n_j +1}$ or $x_0$ occurs, the next state of the chain is $x_{n_j +1}$ with probability $p_{j+1}$, i.e
$$
P\big( Y_{n+1} = x_{n_j +1} \big| Y_n = x_{n_j}, Y_{n+1} \in \{ x_0 , x_{n_j +1} \} \big) = p_{j+1} \, .
$$
\end{enumerate}
The verification of (iii), (iv), (v) follows directly from the definition of $(Y_n)_{n\ge 0}$. By the Markov property and properties (i)-(v) above, the probability that the $(Y_n)_{n\ge 0}$ never returns to $x_0$ coincide with the event that $(Y_n)_{n\ge 0}$ reach $x_{n_j}$ before it returns to $x_0$ for every $j\ge 1$ which has probability $\prod_{j=1}^{\infty} p_j > 0$.
\end{proof}

\begin{remark}
Let $(Y_n)_{n\ge 0}$ be an irreducible BV stochastic adding machine. If $p_1 <1$ then clearly $(Y_n)_{n\ge 0}$ is aperiodic since $P(Y_1 = x_0 |Y_0 = x_0) = 1-p_1 > 0$. However, when $p_1 = 1$ the chain can be periodic or aperiodic depending on the Bratteli diagram.
\end{remark}

\begin{Proposition}
\label{prop:rec-}
Let B be an ordered Bratteli diagram satisfying Hypothesis A and $(p_i)_{i\geq 1}$ be a sequence of nonnull probabilities such that $\# \{ i: p_i < 1 \} = \infty$ and $\prod_{j=1}^{\infty} p_j = 0$. Then every BV stochastic adding machine associated to $(p_i)_{i\geq 1}$ is null recurrent. 
\end{Proposition}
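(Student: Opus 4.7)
The plan is to exhibit the counting measure on $\widetilde X_B$ as an invariant measure for the transition operator $S$ and then invoke the standard dichotomy for irreducible recurrent Markov chains. Note first that the hypothesis $\prod_{j\ge1}p_j=0$ forces $\#\{i:p_i<1\}=\infty$ (otherwise the tail of $(p_j)$ would be identically $1$ and the product could not vanish), so Proposition~\ref{prop:irredutivel} gives that $(Y_n)_{n\ge 0}$ is irreducible and recurrent. If counting measure is invariant, then by uniqueness up to a scalar of the invariant measure of an irreducible recurrent chain, every invariant measure has infinite total mass, which is equivalent to null recurrence.

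The whole proof thus reduces to verifying that $S$ is bistochastic, i.e.\ $\sum_{n\ge 0}S(x_n,x_m)=1$ for every $m$. By Remark~\ref{remark:condA}, the transition law under Hypothesis A splits the column at $x_m$ into three contributions: the self-loop $S(x_m,x_m)=1-p_1$; the Vershik predecessor $S(x_{m-1},x_m)=p_1\cdots p_{\zeta(x_{m-1})}$, present only when $m\ge 1$; and the ``collapse'' terms $S(x_n,x_m)=p_1\cdots p_j(1-p_{j+1})$, one for each pair $(n,j)$ with $y_j(x_n)=x_m$.

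The combinatorial heart of the argument is to parametrize those collapse pairs. Let $L=L(x_m)$ be the number of initial minimal edges of $x_m$ (so $L=\infty$ when $x_m=x_0$). I claim that for each $j\in\{1,\ldots,L\}$ there is exactly one path $x_n$ with $y_j(x_n)=x_m$: the one whose first $j$ edges form the unique upward maximal path ending at $s(e_{j+1}(x_m))$ and whose edges from level $j+1$ onward coincide with those of $x_m$. For $j>L$, the construction forces the first $j$ edges of $y_j(x_n)$ to be minimal, which is impossible since the $(L+1)$-th edge of $x_m$ is by definition not minimal. A parallel inspection of $V_B^{-1}$, using that Hypothesis A forces the successor of every non-maximal edge to be non-minimal (because each vertex has at least two incoming edges), gives $\zeta(x_{m-1})=L+1$ whenever $m\ge 1$.

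Granting these facts, the telescoping identity
$$
\sum_{j=1}^{L}p_1\cdots p_j(1-p_{j+1})=p_1-p_1\cdots p_{L+1}
$$
combines with the predecessor term to yield column sum $(1-p_1)+p_1\cdots p_{L+1}+(p_1-p_1\cdots p_{L+1})=1$ whenever $m\ge 1$. For $m=0$ the predecessor term is absent and the collapse sum runs over all $j\ge 1$; here $p_1\cdots p_{L+1}$ is replaced by $\lim_{j\to\infty}p_1\cdots p_{j+1}=0$, which is precisely where the assumption $\prod_{j\ge 1}p_j=0$ is used, and one again obtains $(1-p_1)+p_1=1$. The most delicate step is the uniqueness-of-$x_n$-per-$j$ claim: it rests on Hypothesis A (to separate min from max at every vertex) together with the uniqueness of upward maximal paths in any ordered Bratteli diagram. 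Once bistochasticity is established, null recurrence follows from the preliminary observations.
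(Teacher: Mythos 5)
Your proof is correct, but it takes a genuinely different route from the paper's. The paper also gets irreducibility and recurrence from Proposition \ref{prop:irredutivel}, but then establishes null recurrence by a direct first-return-time estimate: it decomposes the return time $T$ to $x_0$ according to the last state $x_{n_j}$ visited before the return, proves by induction (using a negative-binomial/geometric argument) that $\mathbb{E}[T\mid Y_{T-1}=x_{n_j}]\ge 1+\sum_{i=1}^{j}\bigl(\prod_{r=1}^{i}p_r\bigr)^{-1}$, and sums against $P(Y_{T-1}=x_{n_j})=\bigl(\prod_{i=1}^{j}p_i\bigr)(1-p_{j+1})$ to get $\mathbb{E}[T]\ge\sum_{m\ge 0}\bigl(1-\prod_{j>m}p_j\bigr)=\infty$. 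You instead exhibit the counting measure on $\widetilde{X}_B^{x_0}$ as invariant and invoke uniqueness (up to scalar) of the invariant measure of an irreducible recurrent chain. Your combinatorial parametrization of the incoming transitions at $x_m$ is sound: the self-loop, the Vershik predecessor when $m\ge 1$ (with $\zeta(x_{m-1})=L+1$, since $V_B$ produces a minimal prefix of length $\zeta-1$ and the incremented edge, being a successor, is never minimal), and exactly one collapse term for each $j\le L$ coming from the unique all-maximal path into $s(e_{j+1}(x_m))$; the telescoping sum then gives column sum $1$, with $\prod_j p_j=0$ needed only at the column of $x_0$ (a positive product would make counting measure strictly excessive there, consistent with transience). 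Each approach buys something: yours is more structural and yields the extra fact that $S$ is bistochastic, generalizing the Killeen--Taylor observation; the paper's is more elementary and its quantitative bounds on $\mathbb{E}[T]$ are reused in the proof of Proposition \ref{prop:recpos2x2}. Two small points to tidy: with the paper's convention $S_{m,n}=P(Y_1=x_n\mid Y_0=x_m)$ your identity should read $\sum_{n}S_{n,m}=1$, and you should note (via Proposition \ref{representation} or the enumeration of finite paths) that every path cofinal with $x_0$ lies in $\widetilde{X}_B^{x_0}$, so the incoming states you construct really are states of the chain.
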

\begin{proof}
Let $(Y_n)_{n\ge 0}$ be a BV stochastic adding machine associated to $(p_i)_{i\geq 1}$, an ordered Bratteli diagram $B = (V, E, \geq)$ and $x_0 \in \widehat{X}_B \cap X_B^{\min}$. Suppose that $B = (V, E, \geq)$ satisfies Hypothesis A, $\# \{ i: p_i < 1 \} = \infty$ and $\prod_{j=1}^{\infty} p_j = 0$. By Proposition \ref{prop:irredutivel}, the chain is irreducible and recurrent.

Put $T = \inf\{ n \ge 1 : Y_n = x_0 \}$, i.e the first return time to $x_0$. We are going to show that the expected value of $T$, $\mathbb{E}[T]$, is infinite and then the chain is null recurrent.

To compute $\mathbb{E}[T]$ we need to recall the definition of the special states $x_{n_j}$, $j\ge 1$, and their properties from the proof of Proposition \ref{prop:irredutivel}. Also recall the definition of the transition probabilities under Hypothesis A from Remark \ref{remark:condA}.

Put $x_{n_0}:=x_0$ and consider the following decomposition
$$
T = \sum_{n=0}^\infty T I_{\{Y_{T-1} = x_{n_j}\}} \, ,
$$
where $I_W$ is the indicator function of the event $W$. We obtain that 
\begin{equation}
\label{eq:ET}
\mathbb{E}[T] = \sum_{n=0}^\infty \mathbb{E}[T | I_{\{Y_{T-1} = x_{n_j}\}}] P(Y_{T-1} = x_{n_j}) \, .
\end{equation}
Clearly on $\{Y_{T-1} = x_{n_0}\}$ we have $T=1$ and $P(Y_{T-1} = x_{n_0}) = 1 - p_1$. Using item (v) in the proof of Proposition \ref{prop:irredutivel} we get that
\begin{equation}
\label{eq:PT}
P(Y_{T-1} = x_{n_j}) = \Big( \prod_{i=1}^j p_j \Big) (1- p_{j+1}) \, .
\end{equation}
We also have that 
\begin{equation}
\label{eq:ET1}
\mathbb{E}[T | I_{\{Y_{T-1} = x_{n_0}\}}] = 1 \, .
\end{equation}

\textbf{Claim:} For every $j\ge 1$
$$
\mathbb{E}[T | I_{\{Y_{T-1} = x_{n_j}\}}] \ge 1 + \sum_{i=1}^j \Big( \prod_{r=1}^i p_r \Big)^{-1}
$$ 

Suppose that the claim holds. Then by \eqref{eq:ET} and \eqref{eq:PT} we have that
\begin{eqnarray}
\mathbb{E}[T] & \ge & (1-p_1) \\
& & + \Big( 1 + \frac{1}{p_1} \Big) p_1 (1-p_2) \\
& & + \Big( 1 + \frac{1}{p_1} + \frac{1}{p_1 p_2} \Big) p_1 p_2 (1-p_3) + \cdots 
\end{eqnarray}
Rearranging terms and putting $p_0=1$ we obtain 
\begin{eqnarray}
\mathbb{E}[T] & \ge & \sum_{m=0}^\infty \sum_{j=1}^\infty p_{m} .... p_{m+j-1} (1-p_{m+j})   \\
& = & \sum_{m=0}^\infty (1- \prod_{j\ge m+1} p_j) = \sum_{m=0}^\infty 1 = \infty \, . 
\end{eqnarray}
Thus the chain is null recurrent.

It remains to prove the Claim. We prove it by induction. Suppose the claim holds for $j-1$ (the case $j=0$ is \eqref{eq:ET1}). Given $\{Y_{T-1} = x_{n_j}\}$ write $T= T_1 + T_2$ where $T_1$ is the time of the first visit of the chain to $x_{n_{j-1}+1}$ and $T_2$ the time spent on $\{x_{n_{j-1}+1},...,x_{n_{j}}\}$ until it arrives at $x_0$. By the induction hypothesis
$$
\mathbb{E}[T_1 | I_{\{Y_{T-1} = x_{n_j}\}}] \ge 1 + \sum_{i=1}^{j-1} \Big( \prod_{r=1}^i p_r \Big)^{-1} .
$$ 
It remains to prove that
$$
\mathbb{E}[T_2 | I_{\{Y_{T-1} = x_{n_j}\}}] \ge \Big( \prod_{r=1}^j p_r \Big)^{-1} .
$$
Time $T_2$ is greater or equal to the number of transitions to get to $x_{0}$ from $x_{n_{j}}$, and this is bounded below by the necessary number of trials from $j$ independent Bernoulli random variables with parameters $p_1$, ... ,$p_j$ to obtain $j$ successes. It is an exercise in probability theory using geometric random variables to prove that this number of trials have expected value equal to $\Big( \prod_{r=1}^j p_r \Big)^{-1}$. 
\end{proof}

From the proof of Proposition \ref{prop:rec-} we see that we can drop Hypothesis $A$ if the sequence $(p_i)_{i\geq 1}$ is constant and the Bratteli diagram is stationary.

\begin{Proposition} \label{prop:rec-p}
Let B be a stationary ordered Bratteli diagram. If $p_i=p \in (0,1)$ for every $i\ge 1$, then every BV stochastic adding machine associated to $(p_i)_{i\geq 1}$ is null recurrent.
\end{Proposition}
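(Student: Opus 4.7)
The approach is to adapt the proof of Proposition 3.5 directly, exploiting the observation already made in the remark between Propositions 3.5 and 3.6: when $p_i = p$ is constant, the products $p_{k_{x,1}} \cdots p_{k_{x,j}}$ that appear in the transition probabilities of Definition 3.1 collapse to $p^j$, independent of which indices $k_{x,i}$ arise. Thus the simplification that Hypothesis A provided in Remark 3.2 is achieved for free, and the argument goes through with every $p_i$ replaced by $p$.

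First, since $p \in (0,1)$ we have $\prod_{i\ge 1} p_i = 0$, so Proposition 3.4 gives that the chain is irreducible and recurrent; it suffices to show $\mathbb{E}[T] = \infty$ where $T = \inf\{n \ge 1 : Y_n = x_0\}$. Second, we define the special states $x_{n_j}$ as in the proof of Proposition 3.4: $x_{n_j}$ agrees with $x_0$ from level $j{+}1$ on, and its first $j$ edges form the maximal path with range $s(e_{j+1}(x_0))$. Properties (i)--(v) from that proof still hold; in particular the cancellation
\[
\frac{P(Y_{n+1}=x_{n_j+1}\mid Y_n = x_{n_j})}{P(Y_{n+1}\in\{x_0,x_{n_j+1}\}\mid Y_n = x_{n_j})} \;=\; \frac{p^{\theta(x_{n_j})+1}}{p^{\theta(x_{n_j})+1}+p^{\theta(x_{n_j})}(1-p)} \;=\; p
\]
is independent of $\theta(x_{n_j})$, so property (v) reads $P(Y_{n+1}=x_{n_j+1}\mid Y_n=x_{n_j},Y_{n+1}\in\{x_0,x_{n_j+1}\})=p$ whether or not Hypothesis A holds.

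Third, by property (iv) the only states from which $x_0$ is reachable in one step are the $x_{n_j}$'s, so we may decompose
\[
\mathbb{E}[T] \;=\; \sum_{j\ge 0} \mathbb{E}[T\mid Y_{T-1} = x_{n_j}]\, P(Y_{T-1}=x_{n_j}),
\]
and the computation of Proposition 3.5 applied with $p_i\equiv p$ yields $P(Y_{T-1}=x_{n_j}) = p^j(1-p)$. The same induction as in Proposition 3.5 gives
\[
\mathbb{E}[T\mid Y_{T-1}=x_{n_j}] \;\ge\; 1+\sum_{i=1}^{j} p^{-i},
\]
using that the expected number of independent Bernoulli($p$) trials needed for $j$ successes is $p^{-j}$. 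Rearranging exactly as in that proof,
\[
\mathbb{E}[T] \;\ge\; \sum_{j\ge 0} p^j(1-p)\Bigl(1+\sum_{i=1}^{j}p^{-i}\Bigr) \;=\; \sum_{m\ge 0}(1-p^{m+1}) \;=\; +\infty,
\]
which proves null recurrence.

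The main obstacle lies in verifying that the proof of Proposition 3.5 truly uses Hypothesis A only through the simplification of Remark 3.2, and not in some subtler combinatorial way. The one place this needs care is property (v) of the list of $x_{n_j}$: without Hypothesis A one may have $\theta(x_{n_j})<j$ (some early levels being ``forced'' single edges), and the raw transition probabilities $x_{n_j}\to x_0$ and $x_{n_j}\to x_{n_j+1}$ do depend on $\theta(x_{n_j})$. The cancellation displayed above shows that their \emph{ratio} does not, which is precisely what the rest of the argument requires; beyond that, the geometric bookkeeping for $\mathbb{E}[T]$ is identical to Proposition 3.5. (In the possibly degenerate situation where the maximal and minimal paths to $s(e_{j+1}(x_0))$ coincide for some small $j$, simplicity and stationarity of $B$ guarantee that distinct $x_{n_j}$ exist for all large enough $j$, and restricting the sum above to such a subsequence still yields divergence.)
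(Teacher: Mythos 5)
The paper offers no separate proof of this proposition (only the one-sentence remark that the proof of Proposition \ref{prop:rec-} extends), so your write-up must stand on its own, and it has a genuine gap at its central estimate. You correctly notice that without Hypothesis A one may have $\theta(x_{n_j})<j$, and you correctly verify that this does not disturb property (v): the ratio computation gives survival probability $p$ regardless of $\theta(x_{n_j})$, so $P(Y_{T-1}=x_{n_j})=p^j(1-p)$ is fine. But the same phenomenon \emph{does} disturb the other half of the argument, the lower bound $\mathbb{E}[T\mid Y_{T-1}=x_{n_j}]\ge 1+\sum_{i=1}^j p^{-i}$, and this is exactly the half you wave through. The justification you import from Proposition \ref{prop:rec-} is that the jump $x_{n_j}\to x_0$ requires $j$ simultaneous Bernoulli$(p)$ successes, hence on the order of $p^{-j}$ attempts. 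Without Hypothesis A that jump is the transition $y_{\theta(x_{n_j})}(x_{n_j})$ and requires only $\theta(x_{n_j})$ successes, so this reasoning yields only $p^{-\theta(x_{n_j})}$ expected visits to $x_{n_j}$. In the stationary Fibonacci diagram ($a=b=c=1$, $d=0$), the maximal path to level $j$ alternates the forced edge into vertex $2$ (which is minimal, hence not in $A$) with the non-minimal edge into vertex $1$, so $\theta(x_{n_j})=\lceil j/2\rceil$. What your argument actually establishes is then $\mathbb{E}[T]\ge\sum_j p^j(1-p)\bigl(1+\sum_{i\le j}p^{-\lceil i/2\rceil}\bigr)$, whose general term is of order $p^{j/2}$ and which \emph{converges}. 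Worse, the inequality you assert is simply false in this example for small $p$: a direct computation of the expected traversal time of the block $\{x_{n_{j-1}+1},\ldots,x_{n_j}\}$ shows it is of order $p^{-(j-1)}$, not $p^{-j}$, so already $\mathbb{E}[T\mid Y_{T-1}=x_{n_4}]$ is of order $2p^{-3}$ while your bound claims at least $p^{-4}$.

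The proposition is nevertheless true, but the divergence comes from a mechanism your proof does not capture: the time spent in the $j$-th block is large not because $x_{n_j}$ must be visited $p^{-j}$ times (it need only be visited about $p^{-\theta(x_{n_j})}$ times), but because each failed attempt at the top of the block throws the chain back to the bottom of the block, and --- by stationarity --- the interior of block $j$ is an isomorphic copy of the whole chain restricted to $\{x_0,\ldots,x_{n_{j-2}}\}$, so each re-ascent costs a recursively defined amount of time. Writing $A_j$ for the expected time to exit block $j$ from its bottom, this self-similarity gives $A_j=\sigma_{j-2}$ where $\sigma_i$ is the expected first-passage time from $x_0$ to $x_{n_i+1}$, and the standard attempts decomposition yields $\sigma_i\ge p^{-(i+1)}$ (there are about $p^{-i}$ attempts, each beginning with a Geometric$(p)$ sojourn). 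Hence $p^jA_j\ge p$ and $\mathbb{E}[T]=1+\sum_j p^jA_j=\infty$. Note that this is where the stationarity hypothesis --- which your proof never invokes, although the proposition assumes it and the surrounding discussion shows it is not decorative --- actually enters. To repair your proof you would need to replace the ``$j$ Bernoulli successes'' step by an argument of this recursive type.
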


Although we have Propositions \ref{prop:rec-} and \ref{prop:rec-p}, a BV stochastic adding machine associated to $(p_i)_{i\ge 1}$ such that $\prod_{j=1}^{\infty} p_j = 0$ can be positive recurrent. So Hypothesis A is necessary. In Example \ref{example:fibo} we describe a stationary BV stochastic adding machine associated to an ordered Bratteli diagram with consecutive ordering which can be positive recurrent for a sufficiently fast decreasing sequence $(p_i)_{i\ge 1}$.


\section{Stochastic machines of stationary $2\times 2$ Bratteli diagrams}
\label{sec:spectrum}

Let $B=(V,E,\geq)$ be a stationary simple ordered Bratteli diagram with incidence matrix $M=\left(\begin{array}{cc} a & b \\ c & d \end{array}\right)$. 

Since $B$ is simple, we have necessarily $b>0$ and $c>0$, moreover either $a>0$ or $d>0$. We can change the labels of vertices in $B$ if necessary and suppose that $a>0$. Therefore $a+b > 1$ and Hypothesis A is equivalent to $c+d > 1$.

\smallskip

We start with a proposition that gives a condition on $2\times 2$ Bratteli diagrams that allows the existence of positive recurrent BV stochastic adding machines.

\begin{Proposition} \label{prop:recpos2x2}
Let $B=(V,E,\geq)$ be a $2\times 2$ stationary simple ordered Bratteli diagram with $a=c=1$, $b>0$ and $d=0$. Then the BV stochastic adding machine associated to $(p_j)_{j\ge 1}$ is positive recurrent if $p_j$ decreases to zero sufficiently fast as $j\rightarrow \infty$.
\end{Proposition}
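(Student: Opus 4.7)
The aim is to establish $\mathbb{E}_{x_0}[T] < \infty$ for the first return time $T := \min\{n \ge 1 : Y_n = x_0\}$; together with irreducibility (Proposition \ref{prop:irredutivel}), this implies positive recurrence. The strategy reverses the direction of the estimates in the proof of Proposition \ref{prop:rec-} to yield an upper bound on $\mathbb{E}_{x_0}[T]$, taking advantage of the fact that Hypothesis A fails at vertex $2$ (row sum $c+d=1$).

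Let $\lambda = (1+\sqrt{1+4b})/2 > 1$ be the Perron eigenvalue of $M$, and for each $k \ge 0$ set
\[
\mathcal{C}_k := \{x \in \widetilde{X}_B^{x_0} : e_j(x) = e_j(x_0) \text{ for all } j > k\}.
\]
A standard Perron--Frobenius estimate yields $|\mathcal{C}_k| \le C_0 \lambda^k$ for some $C_0 > 0$ depending only on $M$. Define the excursion depth
\[
L := \max\big\{k \ge 0 : e_k(Y_n) \neq e_k(x_0) \text{ for some } 1 \le n \le T-1\big\},
\]
so that $\mathbb{E}_{x_0}[T] = \sum_{k \ge 0} \mathbb{E}_{x_0}[T\, I_{\{L = k\}}]$ and every excursion with $L = k$ remains in $\mathcal{C}_k$.

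The heart of the proof is an estimate of the form $P_{x_0}(L \ge k) \le p_1 p_2 \cdots p_k$ (possibly with a polynomial prefactor in $k$). Inspecting Definition \ref{defvbs} and Remark \ref{remark:condA}, the only way for the chain to first alter the $k$-th edge of its current path is for $\widehat{V}_B$ to realize the $V_B(Y_n)$ alternative at a state with $\zeta(Y_n) \ge k$; the conditional probability of that alternative is $p_{k_{x,1}} \cdots p_{k_{x,\theta(x)}}\, p_{\zeta(x)}$. Collecting these factors along all admissible sequences of transitions leading to such a state, using the independence of the Bernoulli trials across distinct time-steps, yields the bound. A routine companion estimate, based on the confinement of the chain to $\mathcal{C}_k$ on $\{L = k\}$ and each state being visited a bounded-expectation number of times per excursion, gives $\mathbb{E}_{x_0}[T\, I_{\{L = k\}}] \le C_1 |\mathcal{C}_k|\, P_{x_0}(L \ge k)$ for some $C_1 > 0$ depending on $p_1$. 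Combining,
\[
\mathbb{E}_{x_0}[T] \le C_0 C_1 \sum_{k \ge 0} \lambda^k\, p_1 p_2 \cdots p_k,
\]
which is finite provided $(p_j)$ decays fast enough, for instance $p_j \le 1/(2\lambda)$ for all $j$ sufficiently large.

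The principal obstacle is the probabilistic estimate on $P_{x_0}(L \ge k)$. The hypothesis $d = 0$ is what makes this viable: it forces vertex $2$ to have a single incoming edge, which keeps the combinatorial growth rate $\lambda$ small (thereby bounding $|\mathcal{C}_k|$) and simultaneously restricts the intermediate paths the chain can use to reach a state with large $\zeta$, ensuring that the cumulative Bernoulli cost $p_1 \cdots p_k$ is genuinely incurred. Converting this intuition into a rigorous quantitative estimate requires a careful combinatorial and probabilistic analysis of $\widehat{V}_B$ on $\widetilde{X}_B^{x_0} \cap \mathcal{C}_k$, most cleanly organized via a coupling with independent Bernoulli trials indexed by time and level.
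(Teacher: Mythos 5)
Your decomposition over the excursion depth $L$ and the bound $P_{x_0}(L\ge k)=p_1\cdots p_k$ are fine (this identity in fact holds for \emph{every} BV stochastic adding machine by properties (iii)--(v) in the proof of Proposition \ref{prop:irredutivel}; it has nothing to do with $d=0$, contrary to your closing discussion). The genuine gap is the ``routine companion estimate'' $\mathbb{E}_{x_0}[T\,I_{\{L=k\}}]\le C_1|\mathcal{C}_k|\,P_{x_0}(L\ge k)$ with $C_1$ depending only on $p_1$. This is false, and your own conclusion exposes it: it would give $\mathbb{E}_{x_0}[T]\le C_0C_1\sum_k(\lambda p)^k<\infty$ for the \emph{constant} sequence $p_j=p<1/(2\lambda)$, i.e.\ positive recurrence, which contradicts Proposition \ref{prop:rec-p} (the diagram here is stationary, so constant $p$ gives null recurrence).

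The estimate fails because the expected occupation time of a state $y$ at depth $j$, given that it is visited, is not $O(1)$. First, the holding time at such a state is geometric of parameter $p_j$ (mean $1/p_j$), not $1/p_1$. More seriously, the number of \emph{distinct} arrivals at a fall-back target state during one excursion is unbounded: each time the chain fails to advance past level $j+1$ it is sent back to the bottom of the current band and must re-traverse it, so the number of restarts is geometric with mean of order $(p_1\cdots p_j)^{-1}$. This multiplicative restart structure is precisely what the paper's proof controls, via the recursion $\mathbb{E}[T\mid Y_{T-1}=x_{n_{j+2}}]\le \mathbb{E}[T\mid Y_{T-1}=x_{n_j}]+(\mathbb{E}[T\mid Y_{T-1}=x_{n_{j+2}}]-1)\,b/p_{j+2}$, in which the conditional expectation reappears on the right as the cost of each of the $b$ geometric$(p_{j+2})$ attempts; the hypotheses $a=c=1$, $d=0$ enter exactly here, by forcing the edge $(j+1,1,0,2)$ to be the unique (hence minimal) edge into vertex $2$, so that only $b$ edges at level $j+2$ need changing and the retraversal cost stays of size $\prod(b/p_i)$ rather than worse. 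Without replacing your bounded-visits claim by such a recursive estimate, the argument does not close; with it, you essentially recover the paper's proof, and the resulting condition on $(p_j)$ is genuinely a decay condition (e.g.\ $\sum_j 2^{-j}p_j^{-1}\cdot p_1\cdots p_j<\infty$ type), not merely $p_j\le 1/(2\lambda)$.
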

\begin{proof}
Recall the definitions from the proof of Proposition \ref{prop:rec-}. In order to prove that the stochastic machine is positive recurrent we have to show that $\mathbb{E}[T] < \infty$.

We claim that there exists $(C_j)_{j\ge 1}$ depending on $b$ but not on $(p_j)_{j\ge 1}$ such that
\begin{equation}
\label{eq:ET<}
\mathbb{E}[T] \le C_1 + \sum_{j=1}^{+\infty} C_j \max \{ p_{j-1} , p_j \} \, .
\end{equation}
From the previous inequality, one simply need to choose $p_j \le r_j / (C_j + C_{j+1})$ with $\sum_{j=1}^{+\infty} r_j < \infty$.

To prove \eqref{eq:ET<} we use \eqref{eq:ET} and \eqref{eq:PT}. So we need to bound from above the conditional expectation $\mathbb{E}[T | I_{\{Y_{T-1} = x_{n_j}\}}]$. The particular form of $x_{n_j}$ is important here. We have that
$$
x_{n_1} = \big( (2,b,1) , (1,0,1) , (1,0,1) , ... \big) \, ,
$$
thus the time to get to $x_{n_1} + 1$ from $x_0$ given $Y_{T-1} = x_{n_j}$ is equal to one plus a negative binomial distribution with parameters $b$ and $p_1$ because the chain uses one unit of time to leave $x_0$ and then spend a geometric time of parameter $p_1$ on each of the last $b$ edges of $E(1)$ with range $1\in V(1)$. Therefore
$$
\mathbb{E}[T | I_{\{Y_{T-1} = x_{n_1}\}}] = 1 + \frac{b}{p_1} 
$$ 
and 
$$
\mathbb{E}[T | I_{\{Y_{T-1} = x_{n_1}\}}] P(Y_{T-1} = x_{n_1}) \le 1+ b = C_1 \, .
$$
Before we can use induction on $j$ we still need to deal with $\mathbb{E}[T | I_{\{Y_{T-1} = x_{n_2}\}}]$ and we need to compute the mean time to get to $x_{n_2} + 1$ from $x_{n_1} + 1$. We have
$$
x_{n_1} + 1 = \big( (1,0,2) , (2,1,1) , (1,0,1) , (1,0,1) ... \big) \, ,
$$
where the first edge is the unique edge in $E(1)$ with range $2$. So from $x_{n_1} + 1$ we only need to change $b$ edges in $E(2)$ to get to $x_{n_2} + 1$ and on each of these edges we spend a geometric time of parameter $p_2$. Therefore 
$$
\mathbb{E}[T | I_{\{Y_{T-1} = x_{n_2}\}}] = \mathbb{E}[T | I_{\{Y_{T-1} = x_{n_1}\}}] + \frac{b}{p_2} =
1 + \frac{b}{p_1} + \frac{b}{p_2} \, ,
$$
and $\mathbb{E}[T | I_{\{Y_{T-1} = x_{n_2}\}}] P(Y_{T-1} = x_{n_2})$ is bounded above by
$$
p_1 p_2 + b p_2 + b p_1 \le (1+2b) \max \{ p_1 , p_2\} 
= C_2 \max \{ p_1 , p_2\}  \, .
$$
Analogous estimates allow us to show that $\mathbb{E}[T | I_{\{Y_{T-1} = x_{n_3}\}}] P(Y_{T-1} = x_{n_3})$ is bounded above by
$$
p_1 p_2 p_3 + p_2 p_3 b + p_1 p_3 b + p_2 b^2 \le (1+2b+b^2) \max \{ p_2 , p_3\} \, .  
$$

Now Suppose that 
$$
\mathbb{E}[T | I_{\{Y_{T-1} = x_{n_j}\}}] P(Y_{T-1} = x_{n_j}) \le C_j \max \{ p_{j-1} , p_j \} \, ,
$$ 
and we are going to estimate $\mathbb{E}[T | I_{\{Y_{T-1} = x_{n_{j+2}}\}}]$. Using the fact that $a=c=1$ to go from $x_{n_{j+1}} + 1$ to $x_{n_{j+2}} + 1$ we need to change $b$ edges in $E(j+2)$ without change the edge $(j+1,1,0,2) \in E(j+1)$ but considering all edges in $E(1) \circ ... \circ E(j)$ with range $1 \in V(j)$. Thus
$$
\mathbb{E}[T | I_{\{Y_{T-1} = x_{n_{j+2}}\}}] \le \mathbb{E}[T | I_{\{Y_{T-1} = x_{n_{j}}\}}] + ( \mathbb{E}[T | I_{\{Y_{T-1} = x_{n_{j+2}}\}}] - 1 ) \frac{b}{p_{j+2}} \, .
$$
Thus $\mathbb{E}[T | I_{\{Y_{T-1} = x_{n_{j+2}}\}}] P(Y_{T-1} = x_{n_{j+2}})$ is bounded above by
$$
C_j \max \{ p_{j-1} , p_j \} p_{j+2} + C_j \max \{ p_{j-1} , p_j \} p_{j+1} \le 2 C_j \max \{ p_{j+1} , p_{j+2} \}  \, .
$$
So we just need to take $C_{j+2} = 2 C_j$. 
\end{proof}

From the proof of Proposition \ref{prop:recpos2x2} we can also see that it is enough to have $2^{-j} p_j$ summable to obtain a positive recurrent stochastic machine from the hypothesis of the proposition.

\medskip

As a corollary we get the result from \cite{cap} about the existence of positive recurrent Fibonacci stochastic adding machines.

\begin{Corollary}
The Fibonacci stochastic adding machines associated to $(p_j)_{j\ge 1}$ are positive recurrent if $p_j$ decreases to zero sufficiently fast as $j\rightarrow \infty$.
\end{Corollary}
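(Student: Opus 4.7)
The plan is simply to verify that the Fibonacci stochastic adding machine falls under the hypothesis of Proposition \ref{prop:recpos2x2} and then invoke that proposition directly. First I would observe that the Fibonacci incidence matrix
$$
M_F=\left(\begin{array}{cc} 1 & 1 \\ 1 & 0 \end{array}\right)
$$
has exactly the form required in Proposition \ref{prop:recpos2x2}: with the labelling used there ($a,b$ in the top row and $c,d$ in the bottom row) we read off $a=1$, $b=1>0$, $c=1$, and $d=0$. In particular $a=c=1$ and $b>0$, so the ordered Bratteli diagram associated to the Fibonacci system is a stationary simple $2\times 2$ diagram meeting the hypothesis.

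Next I would note that the canonical consecutive ordering used in Example \ref{example:fibo} is consistent with the ordering used implicitly in Proposition \ref{prop:recpos2x2}, so the special states $x_{n_j}$ and the transition mechanism described in its proof apply verbatim in the Fibonacci setting. In fact, this is the very case the proof of Proposition \ref{prop:recpos2x2} already handles when $b=1$, so no re-derivation is needed.

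Therefore, by Proposition \ref{prop:recpos2x2}, the BV stochastic adding machine associated to the Fibonacci diagram and to the probability sequence $(p_j)_{j\ge 1}$ is positive recurrent whenever $p_j$ decreases to zero sufficiently fast (for instance, it suffices by the remark after Proposition \ref{prop:recpos2x2} that $\sum_{j\ge 1} 2^{-j} p_j < \infty$, which is automatic as soon as $p_j \to 0$ is summable, and can be arranged by choosing $p_j \le r_j / (C_j + C_{j+1})$ with $\sum_j r_j < \infty$ as in the proof). Since this is a direct specialisation, there is no real obstacle; the only thing to check is the alignment of the matrix entries and the ordering conventions, which is immediate.
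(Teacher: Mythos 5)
Your argument is correct and coincides with the paper's: the corollary is stated there with no separate proof precisely because $M_F$ has $a=c=b=1$ and $d=0$, so it is literally the case $b=1$ of Proposition \ref{prop:recpos2x2}, and all you need to do is read off the entries and note that the canonical consecutive ordering is the one used in that proposition's proof. The only flaw is in your quantitative aside: summability of $2^{-j}p_j$ is automatic for \emph{every} bounded sequence $(p_j)$ (in particular for constant $p$, where the chain is null recurrent by Proposition \ref{prop:rec-p}), so that condition cannot be the intended one --- the constants satisfy $C_{j+2}=2C_j$ and hence grow geometrically, so the sufficient condition should be summability of $2^{j}p_j$ (equivalently $C_jp_j$); this slip does not affect the corollary, whose hypothesis is only that $p_j$ decreases to zero sufficiently fast.
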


To continue the study of BV stochastic machines of $2\times 2$ Bratteli diagrams, we need to introduce some notation related to systems of numeration associated to the $2\times 2$ Bratteli diagrams.

Let us denote $M^n$ by 
$$
M^n =\left(\begin{array}{cccc}
a_n & b_n \\
c_n & d_n
\end{array}\right),
$$
for all $n\geq 0$, where $M^0=I$ is the identity matrix. For each $n\geq 0$, put
$$
F_n=a_n+b_n \, , \ \ G_n=c_n+d_n  \ \ \forall n\ge 1 \, .
$$
This gives $F_0=G_0=1$, $F_1 = a+b$ and $G_1 = c+d$.

\begin{Remark} \label{caminhos}
For each nonnegative integer $n\geq 1$, $F_n$ is the number of paths from $V(0)$ to the vertex $(n,1)$ at the Bratteli diagram. Respectively, $G_n$ is the number of paths from $V(0)$ to the vertex $(n,2)$.
\end{Remark}

\begin{Lemma} \label{lemmaseq}
We have $F_{n+1}=(a+d)F_n-(ad-bc)F_{n-1}$ and $G_{n+1}=(a+d)G_n-(ad-bc)G_{n-1}$, for all $n\geq 1$.\label{case2}
\end{Lemma}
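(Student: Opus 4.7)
The plan is to read the recurrences off from the Cayley--Hamilton identity for the $2\times 2$ incidence matrix $M$. The characteristic polynomial of $M$ is $\lambda^{2}-(a+d)\lambda+(ad-bc)$, so Cayley--Hamilton gives
$$
M^{2}=(a+d)M-(ad-bc)I.
$$
Multiplying this identity on the right (or left) by $M^{n-1}$ for any $n\geq 1$ yields
$$
M^{n+1}=(a+d)M^{n}-(ad-bc)M^{n-1},
$$
and reading off the four entries produces the componentwise recurrences
$$
a_{n+1}=(a+d)a_{n}-(ad-bc)a_{n-1},\qquad b_{n+1}=(a+d)b_{n}-(ad-bc)b_{n-1},
$$
and similarly for $c_{n}$ and $d_{n}$. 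This uses only the convention $M^{0}=I$ that was fixed just before the statement, so the relation makes sense already at $n=1$.

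To conclude I would simply add the identities for $a_n$ and $b_n$, and separately those for $c_n$ and $d_n$. Since $F_n=a_n+b_n$ and $G_n=c_n+d_n$ by definition, linearity of the recurrence immediately gives
$$
F_{n+1}=(a+d)F_{n}-(ad-bc)F_{n-1}\quad\text{and}\quad G_{n+1}=(a+d)G_{n}-(ad-bc)G_{n-1},
$$
for all $n\geq 1$, which is the claim.

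There is no real obstacle here: the lemma is essentially a restatement of Cayley--Hamilton for $M$, combined with the fact that the row sums $F_n$ and $G_n$ are linear in the entries of $M^n$ and therefore inherit the same scalar recurrence. If one prefers to avoid invoking Cayley--Hamilton by name, the same argument can be carried out by hand: expand $M^{n+1}=M\cdot M^{n}$ and $M^{n}=M\cdot M^{n-1}$ entrywise and eliminate the off-diagonal entries, which is a short computation relying only on the identity $\det M=ad-bc$ and the trace $a+d$.
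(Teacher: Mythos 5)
Your proof is correct and follows essentially the same route as the paper: the paper applies the Cayley--Hamilton recurrence $M^{n+1}=(a+d)M^{n}-(ad-bc)M^{n-1}$ to the vector $(1,1)^{T}$, noting $(F_n,G_n)^{T}=M^{n}(1,1)^{T}$, while you read off the entrywise recurrences and sum the rows --- the same computation packaged slightly differently. (Incidentally, your sign convention for the characteristic polynomial is the correct one; the paper's statement of it contains a sign typo.)
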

\begin{proof}
It comes from the fact that $\left(\begin{array}{c}
F_n \\ G_n
\end{array}\right) = M^n \left(\begin{array}{c}
1 \\ 1 
\end{array}\right)$ for all $n\geq 0$ and the characteristic polynomial of $M$ is $p(x)=x^2+(a+d)x-(ad-bc)$.
\end{proof}

\subsection{$2\times 2$ case under Hypothesis A and consecutive ordering}

From now on we assume that $abc>0$, $c+d>1$ and that $B$ is endowed with the consecutive ordering. Thus $B$ is simple and satisfies Hypothesis A. Moreover $X^{min}_B = \{x_0\}$ is a unitary set and for each $x\in\widetilde{X}_B$ we have $A(x)=\{1,\ldots,\zeta(x)-1\}$. The aim of this section is the study of the spectrum of BV stochastic machines under these conditions.

We first need to establish a proper notation to deal with the possible transitions of the chain in $\widetilde{X}_B = \widetilde{X}_B^{x_0}$. Define $\vec{0}_j$ as the minimum edge of $E(j)$ with range $1$, i.e.
$\vec{0}_j = (j,1,0,1)$. For convenience we will sometimes not write the level index $j$ simply writing $\vec{0}=(1,0,1)$.  Let $x = (e_j)_{j\ge 1} =((s_j,m_j,r_j))_{j\ge 1} \in \widetilde{X}_B$. Recall that $x_0 = (\vec{0}_j)_{j\ge 1}$ and $x \neq x_0$ is cofinal with $x_0$, thus there exists $N \in\mathbb{N}$ such that $x_N = V_B^N(x_0)=x$. Put
$$
\xi(x) = \min \{ j \ge 1: e_l = \vec{0} \ \textrm{ for all } \ l> j \} \, .
$$ 
The reader should recall the definition of $\zeta(x)$ and note that $\zeta(x)$ and $\xi(x)$ play a different role.

\subsection{Numeration systems associated to Bratteli diagrams}

\begin{Definition}\label{defdeltagamma}
Let $x \in \widetilde{X}_B$. For each $j\in\{1,\ldots,\xi(x)\}$, define $\delta_j(x)=\delta_j$ and $\gamma_j(x)=\gamma_j$ according to the following four cases:
\begin{enumerate}
\item[(i)] If $s_j = 1$ and $r_j =1$  then $\delta_j = m_j\in\{0,\ldots,a-1\}$ and $\gamma_j = 0$;
\item[(ii)] If $s_j = 2$ and $r_j = 1$ then $\delta_j = a$ and $\gamma_j = m_j - a \in\{0,\ldots, b-1\}$;
\item[(iii)] If $s_j = 1$ and $r_j = 2$ then $\delta_j = m_j\in\{0,\ldots,c-1\}$ and $\gamma_j = 0$;
\item[(iv)] If $s_j=2$ and $r_j = 2$ then $\delta_j = c$ and $\gamma_j = m_j - c \in\{0,\ldots,d-1\}$.
\end{enumerate}
For $x_N = V_B^N(x_0)=x$ we also denote $\delta_j = \delta_j(N)$ and $\gamma_j = \gamma_j(N)$.
\end{Definition}

Observe that $m_j=\delta_j+\gamma_j$, for all $j\geq 1$. Moreover, if $d=0$, then $(s_j,r_j)\neq (2,2)$, for all $j\geq 1$.

\begin{Example} \label{exdeltagamma}
	Consider the consecutive ordering Bratteli diagram $B$ represented by the matrix $M=\left(\begin{array}{cc} 1 & 3 \\ 1 & 4 \end{array}\right)$. By Lemma \ref{lemmaseq}, we have
	\begin{center}$ \begin{array}{llllll}
	F_0=1, & F_1=4, & F_2=19, & F_3=91, &  \ldots \\
	G_0=1, & G_1=5, & G_2=24, & G_3=115,  & \ldots 
	\end{array}$ \end{center} 
	
Consider $x,y\in B$ where $x=(x_j)_{j\geq 1} = ((2,3,2),(2,4,2),(2,2,1),\vec{0},\vec{0},...)$ and $y=(y_j)_{j\geq 1}=((2,2,1),(1,0,2),(2,3,1),\vec{0},\vec{0},...)$. The representation of $x$ and $y$ in the Bratteli diagram is given respectively in the items $(a)$ and $(b)$ of Figure \ref{digdigbrat}.
	\begin{figure}[h!]
		\centering
		\includegraphics[scale=0.8]{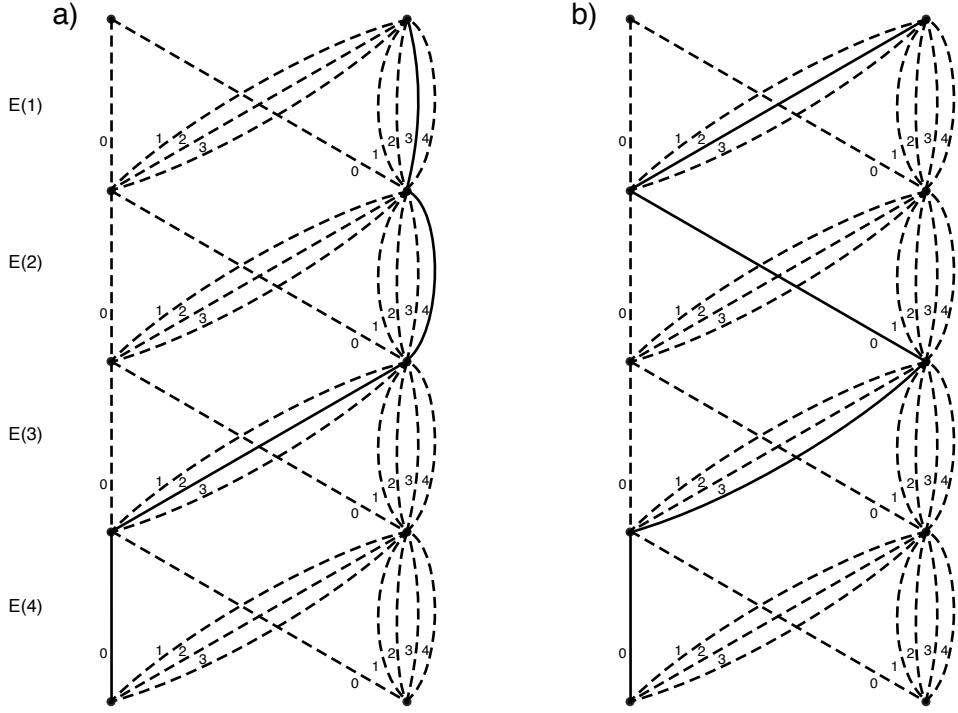}
		\caption{Representation of the paths in the Bratteli diagram described in Example \ref{exdeltagamma}.}
		\label{digdigbrat}
	\end{figure}
	
	By Definition \ref{defdeltagamma}, we have that
\begin{center}	$\begin{array}{l}
	\delta_1(x)=1, \textrm{ } \gamma_1(x)=2;   \\
	\delta_2(x)=1, \textrm{ } \gamma_2(x)=3;   \\
	\delta_3(x)=1, \textrm{ } \gamma_3(x)=1;   \\
	\delta_i(x)=\gamma_i(x)=0 \textrm{, for all } i\geq 4.
	\end{array} \textrm{ and } \begin{array}{l}
	\delta_1(y)=1, \textrm{ } \gamma_1(y)=1;   \\
	\delta_2(y)=0, \textrm{ } \gamma_2(y)=0;   \\
	\delta_3(y)=1, \textrm{ } \gamma_3(y)=2;   \\
	\delta_i(y)=\gamma_i(y)=0 \textrm{, for all } i\geq 4.
	\end{array}$ \end{center} 
\end{Example}
\begin{Proposition} \label{representation}
Let $N$ be a nonnegative integer and $x \in \widetilde{X}_B$ such that $V_B^N(x_0) = x$. Then, $N=\sum_{j=0}^{\xi(x)}\delta_{j+1} F_j+\gamma_{j+1} G_j$, where $\delta_i(N)=\delta_i$ and $\gamma_i(N)=\gamma_i$ are defined in Definition \ref{defdeltagamma}, for all $i\geq 1$.
\end{Proposition}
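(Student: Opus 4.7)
The plan is to identify $N$ with the lex-rank of $x|_{[1,\xi(x)]}$ among the finite paths from $V(0)$ to the vertex $(\xi(x),1)$ in $E(1)\circ\cdots\circ E(\xi(x))$, and then count that rank combinatorially. Since $s(\vec 0)=1$, the requirement $e_l(x)=\vec 0$ for $l>\xi(x)$ forces $r(e_{\xi(x)}(x))=1$, so $x|_{[1,\xi(x)]}$ really is such a path, and $x_0|_{[1,\xi(x)]}=(\vec 0,\ldots,\vec 0)$ is the minimum element of this finite ordered set.

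The main technical step is to show that the Vershik orbit $V_B^k(x_0)$, $0\le k\le N$, never escapes the region $\{y:\xi(y)\le\xi(x)\}$, so that each iterate (padded with $\vec 0$'s up to level $\xi(x)$) is a length-$\xi(x)$ path ending at $(\xi(x),1)$. For $y\in\widetilde{X}_B$ with $\zeta(y)\le\xi(y)$, the map $V_B$ alters only levels $\le\xi(y)$, so $e_l(V_B(y))=\vec 0$ for $l>\xi(y)$ and at $l=\xi(y)$ the edge is either unchanged or replaced by its successor, which is non-$\vec 0$; hence $\xi(V_B(y))=\xi(y)$. If instead $\zeta(y)>\xi(y)$, then because $abc>0$ and $c+d>1$ imply that $\vec 0$ is never a maximal edge (the maximum with range $1$ being $(2,a+b-1,1)$), we must have $\zeta(y)=\xi(y)+1$, and the push increases $\xi$ by exactly one. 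Thus $\xi$ is monotone non-decreasing along the orbit and stays $\le\xi(x)$. Restricted to length-$\xi(x)$ paths ending at $(\xi(x),1)$, the map $V_B$ is exactly the successor in the induced lex order; starting at the minimum, we conclude that $N$ equals the number of such paths strictly smaller than $x|_{[1,\xi(x)]}$.

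To count them, a path $(f_1,\ldots,f_{\xi(x)})$ is smaller than $(e_1,\ldots,e_{\xi(x)})$ iff there is a unique level $i$ with $f_j=e_j$ for $j>i$, $r(f_i)=r(e_i)$, $f_i<e_i$ in $r^{-1}(r(e_i))$, and $(f_1,\ldots,f_{i-1})$ an arbitrary path from $V(0)$ to $s(f_i)$. By Remark \ref{caminhos} the last factor gives $F_{i-1}$ or $G_{i-1}$ according to whether $s(f_i)=1$ or $2$. In the canonical consecutive ordering, the edges of $r^{-1}(r_i)$ are listed with all source-$1$ edges first and then all source-$2$ edges. A case check against Definition \ref{defdeltagamma} then shows that the level-$i$ contribution is $\delta_i F_{i-1}+\gamma_i G_{i-1}$: cases (i) and (iii) give $\delta_i=m_i$ source-$1$ edges below $e_i$ and no source-$2$ edges, while cases (ii) and (iv) give all $\delta_i$ source-$1$ edges plus $\gamma_i=m_i-\delta_i$ source-$2$ edges with smaller label. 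Summing over $i=1,\ldots,\xi(x)$ yields the stated identity, since the extra $i=\xi(x)+1$ term vanishes because $e_{\xi(x)+1}(x)=\vec 0$ puts us in case (i) with $m=0$, hence $\delta_{\xi(x)+1}=\gamma_{\xi(x)+1}=0$. The main obstacle is the monotonicity-of-$\xi$ argument in the second paragraph; the rest is a routine combinatorial expansion.
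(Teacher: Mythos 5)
Your proof is correct, but it takes a genuinely different route from the paper's. The paper argues ``from the top digit down'': it first computes $V_B^{F_{k-1}}(x_0)$ explicitly (its relation \eqref{vfnx0}), then sets $N_{\xi(x)}=\delta_{\xi(x)}F_{\xi(x)-1}+\gamma_{\xi(x)}G_{\xi(x)-1}$ and $N_j=\delta_jF_{j-1}+\gamma_jG_{j-1}+N_{j+1}$, and shows by downward induction on $j$ that $V_B^{N_j}(x_0)$ is the path agreeing with $x$ from level $j$ upward and minimal below; taking $j=1$ gives $V_B^{N_1}(x_0)=x$ and hence $N_1=N$. You instead interpret $N$ as the rank of $x|_{[1,\xi(x)]}$ in the induced linear order on paths from $V(0)$ to $(\xi(x),1)$, justify this by showing $\xi$ is non-decreasing along the orbit (so $V_B$ restricted to the orbit up to $x$ is the successor map on that finite linearly ordered set), and then count the smaller paths level by level using Remark \ref{caminhos} and Definition \ref{defdeltagamma}. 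Your monotonicity lemma for $\xi$ --- including the observation that $\vec{0}$ is never maximal because $a+b>1$, so $\zeta(y)\leq\xi(y)+1$ --- is exactly the point the paper leaves implicit, and your version makes it explicit; in exchange, the paper's construction yields as a by-product the explicit description of the intermediate iterates $V_B^{N_j}(x_0)$, which it reuses later (e.g.\ in the proof of Proposition \ref{proprop}). Both arguments rely on the canonical consecutive ordering built into Definition \ref{defdeltagamma}, and both implicitly use that $n\mapsto V_B^n(x_0)$ is injective; no gap in either.
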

\begin{proof} Fix a nonnegative integer $N$ and let $x = V_B^N(x_0) = (e_1,e_2,e_3,...)$, with $e_i=(s_i,m_i,r_i)$ for all $i\geq 1$. 

From Remark \ref{caminhos}, we have that for each nonnegative integer $k \geq 2$
\begin{equation} \label{vfnx0}
V_B^{F_{k-1}}(x_0)=(\underbrace{\vec{0},\ldots,\vec{0}}_{(k-2) \textrm{ times}},\tilde{e},\tilde{f},\vec{0},\vec{0},\ldots) 
\end{equation}
where either $\tilde{e} = \vec{0}$ and $\tilde{f}=(1,1,1)$ if $a>1$ or $\tilde{e} = (1,0,2)$ and $\tilde{f}=(2,1,1)$ if $a=1$. Thus, since $e_k=(s_k,m_k,r_k)$, it follows that if $s_k = 1$ and $r_k = 1$, then $a> 1$, $m_k\in\{0,\ldots,a-1\}$ and  
\begin{eqnarray}
\label{eq:VFG1}
V_B^{\delta_kF_{k-1}+\gamma_kG_{k-1}}(x_0) & = & V_B^{m_k F_{k-1}}(x_0) \nonumber \\
& = & (\underbrace{\vec{0},\ldots,\vec{0}}_{(k-2) \textrm{ times}},\tilde{e},(1,m_k,1),\vec{0},\vec{0},\ldots) \, ,
\end{eqnarray}
and if $s_k = 2$ and $r_k = 1$, then $m_k\in\{a,\ldots,a+b-1\}$ and  
\begin{eqnarray}
\label{eq:VFG2}
V_B^{\delta_kF_{k-1}+\gamma_kG_{k-1}}(x_0) & = & V_B^{aF_{k-1}+(m_k-a)G_{k-1}}(x_0) \nonumber \\
& = & (\underbrace{\vec{0},\ldots,\vec{0}}_{(k-2) \textrm{ times}},\tilde{e},(2,m_k,1),\vec{0},\vec{0},\ldots) \, .
\end{eqnarray}

Now, consider $k = \xi(x)= \min \{ j \ge 1: e_l = \vec{0} \ \textrm{ for all } \ l> j \} \,$ and put $N_k=\delta_k F_{k-1}+\gamma_k G_{k-1}$. For each $j\in\{1,\ldots,k-1\}$, let $N_j=\delta_jF_{j-1}+\gamma_jG_{j-1}+N_{j+1}$ and $x(j+1):=(\underbrace{\vec{0},\ldots,\vec{0}}_{j-1 \textrm{ times}},\tilde{e},e_{j+1},e_{j+2},\ldots,e_k,\vec{0},\vec{0},\ldots)$. Suppose that $V_B^{N_{j+1}}(x_0)=x(j+1)$, for some $j\in\{1,\ldots,k-1\}$.

Here, we need to consider four cases:
$$\begin{array}{rlrl}
i) & s_{j}=1  \textrm{ and } r_j= 1;  & iii) & s_{j}=1  \textrm{ and } r_j= 2;     \\ 
ii) & s_{j}=2  \textrm{ and } r_j= 1; & iv) & s_{j}=2  \textrm{ and } r_j= 2.     \\
\end{array}
$$

For example, in the case $ii)$ we have $\tilde{e}=(1,0,1)$, $m_j\in\{a,\ldots,a+b-1\}$ and  $V_B^{N_j}(x_0)= V_B^{aF_{j-1}+(m_j-a)G_{j-1}}\left(V_B^{N_{j+1}}(x_0)\right) = V_B^{aF_{j-1}+(m_j-a)G_{j-1}}(x(j+1))=x(j)$. In the same way, we can check that $V_B^{N_j}(x_0)=x(j)$ for the other cases.

By induction we have $V_B^{N_1}(x_0)=x(1) = x$ and since $\delta_i=\gamma_i=0$, for all $i>k$, it follows that $N_1=\displaystyle \sum_{i=1}^k(\delta_iF_{i-1}+\gamma_iG_{i-1})=\sum_{i=1}^{+\infty}(\delta_iF_{i-1}+\gamma_iG_{i-1})=N$.
\end{proof}

\begin{Remark}
We believe that the last proposition is another formulation of Lemma 4 in \cite{bdm}, which gives a formula of the first entrance time map.
\end{Remark}

\begin{Remark}
We call $((\delta_1,\gamma_1),(\delta_2,\gamma_2),\ldots)$ the $(F,G)$-representation of $N$ and we put $N=\sum_{j=0}^{\xi(x)}\delta_{j+1} F_j+\gamma_{j+1} G_j=((\delta_1,\gamma_1),(\delta_2,\gamma_2),\ldots)$. The set of $(F,G)$-representations is recognized by a finite graph called automaton (see Figure \ref{transdutor}).  \label{fgrepresentation}
\end{Remark}

\begin{figure}[h!]
	\centering
	\includegraphics[scale=0.6]{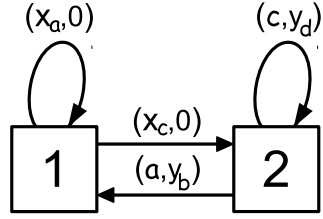}
	\caption{Automaton related to the $(F,G)$-representation of $N=((\delta_1,\gamma_1),(\delta_2,\gamma_2),\ldots)$, where $\delta_i\in\{x_a,x_c\}$ and $\gamma_i\in\{y_b,y_d\}$, for all $i\geq 1$ with $x_a\in\{0,\ldots,a-1\}$, $x_c\in\{0,\ldots,c-1\}$, $y_b\in\{0,\ldots,b-1\}$ and $y_d\in\{0,\ldots,d-1\}$.}\label{transdutor}
\end{figure}

\begin{Remark}
In Example \ref{exdeltagamma}, it follows by Proposition \ref{representation} that $L(N_x)=x$ and $L(N_y)=y$ where $N_x=F_0+2G_0+F_1+3G_1+F_2+G_2=65$ and $N_y=F_0+G_0+F_2+2G_2=69$.
\end{Remark}

\begin{Example}
If $M=(d)$ for $d\geq 2$, by Proposition \ref{representation}, we obtain the numeration in base $d$, with digits $\{0,1,\ldots,d-1\}$.
\end{Example}

\begin{Remark}
We can define the sequences $(\delta_i(x))_{i\geq 1}$ and $(\gamma_i(x))_{i\geq 1}$ for all $x\in \tilde{X}_B$ as done in Definition \ref{defdeltagamma}, in the case where the Bratteli diagram does not satisfies Hypothesis A, i.e. the incidence matrix $M=\left( \begin{array}{cc} a & b \\ 1 & 0 \end{array}\right)$, where $ab>0$. Furthermore, in this case $\delta_i\in\{0,\ldots,a\}$ and $\gamma_i\in\{0,\ldots,b-1\}$, for all $i\geq 1$. Moreover, by Lemma \ref{lemmaseq}, we have that $G_n=F_{n-1}$, for all $n\geq 1$. By Proposition \ref{representation}, the $(F,G)$-representation of $N$ is given by the automaton represented in Figure \ref{fgrepresentationc1d0}.
\end{Remark}

\begin{figure}[h!]
	\centering
	\includegraphics[scale=0.5]{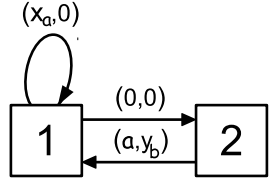}
	\caption{Automaton related to the $(F,G)$-representation, where $x_a\in\{0,\ldots,a-1\}$ and $y_b\in\{0,\ldots,b-1\}$.}\label{fgrepresentationc1d0}
\end{figure}

Observe in Figure \ref{fgrepresentationc1d0} that when $b=1$, the representation of $N$ is $((\delta_1,0)(\delta_2,0),\ldots)$, with $\delta_i\delta_{i-1}<_{lex} a1$, for all $i\geq 2$.


\subsection{Spectra of the stochastic machines of $2\times 2$ Bratteli diagrams}

We are finally in position to compute the spectrum of the transition operator (acting in $l^\infty(\mathbb{N})$) of the BV stochastic adding machines associated to a $2\times 2$ stationary Bratteli diagram endowed with the consecutive ordering. We denote the spectrum, point spectrum and approximate point spectrum of the transition operator $S$ respectively by $\sigma(S)$, $\sigma_{pt}(S)$ and $\sigma_a(S)$. Recall that $\lambda$ belongs to $\sigma(S)$ (resp. $\sigma_{pt}(S)$) if $S-\lambda I$ is not bijective (resp. not one-to-one). Also, $\lambda\in\sigma_a(S)$ if there exists a sequence $(v_n)_{n\geq 0}$ such that $\|v_n\|=1$, for all $n\geq 0$ and $(S-\lambda I)v_n$ converges to $0$ when $n$ goes to infinity.

For each $\lambda\in\mathbb{C}$, let $(u_{F_n}(\lambda))_{n\geq
0}=(u_{F_n})_{n\geq 0}$ and $(w_{F_n}(\lambda))_{n\geq 0}=(w_{F_n})_{n\geq 0}$ be the sequences defined by
$u_{F_0}=w_{F_0}=\frac{\lambda-(1-p_1)}{p_1}$ and for all $n\geq 1$.

\begin{equation}  \label{deffib}
u_{F_n}=\frac{1}{p_{n+1}}u_{F_{n-1}}^aw_{F_{n-1}}^b-\frac{1-p_{n+1}}{p_{n+1}} \textrm{, } w_{F_n}=\frac{1}{p_{n+1}}u_{F_{n-1}}^cw_{F_{n-1}}^d-\frac{1-p_{n+1}}{p_{n+1}}.
\end{equation}

\noindent From this, let $(v_n)_{n\geq 1}$ be the sequence defined by $v_n=\prod_{i=0}^{\xi(n)-1}u_{F_i}^{\delta_{i+1}}w_{F_i}^{\gamma_{i+1}}$, where $\delta_j=\delta_j(n)$ and $\gamma_j=\gamma_j(n)$, $j\in\{1,\ldots,\xi(n)\}$, are given in Definition \ref{defdeltagamma}. Since $v_{F_n}=u_{F_n}$, for all $n\geq 0$, we will denote $v_n$ by $u_n$.

\begin{Theorem} Let $S$ be the transition operator of a BV stochastic machine associated to a $2 \times 2$ Bratteli diagram $B$. Then, acting in $l^\infty(\mathbb{N})$, we have that the set of eigenvalues of $S$ is
\begin{center}
$\sigma_{pt}(S)=\{\lambda\in\mathbb{C}:(u_n(\lambda))_{n\geq 1}
\textrm{ is bounded}\}$.
\end{center}
\label{teorema431}
\end{Theorem}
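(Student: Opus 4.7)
Set $v_n := u_n(\lambda)$ for $n \geq 1$ with $v_0 := 1$ (consistent with empty products). The plan is to establish the pointwise identity $(Sv)_m = \lambda v_m$ for all $m \geq 0$ and all $\lambda \in \mathbb{C}$, with the boundedness of $u(\lambda)$ entering only at the very end. Once this is done the forward inclusion is immediate (a bounded $u(\lambda)$ is a nonzero eigenvector, since $u_0 = 1$), and the reverse follows by uniqueness of the eigenvector up to scaling.

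\textbf{Step 1 (the equation at $x_m$).} Fix $m$ and put $k := \zeta(x_m)$. By Definition \ref{defvbs} and Remark \ref{remark:condA}, the equation $(Sv)_m = \lambda v_m$ is equivalent to
\[
(\lambda - (1-p_1)) v_m = \sum_{j=1}^{k-1} p_1 \cdots p_j (1-p_{j+1}) v_{y_j(x_m)} + p_1 \cdots p_k \, v_{V_B(x_m)}.
\]
The consecutive ordering gives a clean description of how the two moves act on the $(F,G)$-representation: $x_m \mapsto y_j(x_m)$ zeros out digits $1,\ldots,j$ (because a minimum path uses only edges of the form $(1,0,r)$) while preserving all later digits; and $x_m \mapsto V_B(x_m)$ zeros digits $1,\ldots,k-1$ and increments digit $k$, introducing one extra factor $u_{F_{k-1}}$ or $w_{F_{k-1}}$ according to whether the successor stays in source $1$ or crosses to source $2$. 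Consequently all terms share a common tail $\prod_{i\geq k} u_{F_i}^{\delta_{i+1}(m)} w_{F_i}^{\gamma_{i+1}(m)}$ that can be divided out, reducing the identity to one purely in $u_{F_0},w_{F_0},\ldots,u_{F_{k-1}},w_{F_{k-1}}$.

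\textbf{Step 2 (telescoping via \eqref{deffib}).} I would verify the reduced identity by downward induction on $i = k-1, k-2, \ldots, 0$. At level $i$, a sub-sum of the form $T[(1-p_{i+1}) + p_{i+1}(u_{F_i} \text{ or } w_{F_i})]$ collapses via \eqref{deffib} into $T\cdot u_{F_{i-1}}^a w_{F_{i-1}}^b$ (or the $w$-variant), which then merges with the next summand of the telescope; the choice between the two recurrences at each step is dictated by the range label $r_i$ of the maximal edge $e_i$ of $x_m$. The boundary level $i=0$ relies on the identity $u_{F_0} = w_{F_0}$ built into the definition, which is precisely what allows both branches ($r=1$ and $r=2$) to agree at the bottom.

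\textbf{Step 3 (uniqueness and conclusion).} For the reverse inclusion, take $v \in l^\infty(\mathbb{N}) \setminus \{0\}$ with $Sv = \lambda v$. Solving the equation for $v_{m+1}$ yields
\[
p_1 \cdots p_k \, v_{m+1} = (\lambda-(1-p_1)) v_m - \sum_{j=1}^{k-1} p_1 \cdots p_j (1-p_{j+1}) v_{m_j},
\]
where $y_j(x_m)=x_{m_j}$ with $m_j < m$ (the strict inequality follows from Proposition \ref{representation}, since the maximality of $e_1,\ldots,e_{k-1}$ forces $\delta_i F_{i-1}+\gamma_i G_{i-1} > 0$ for every $1 \leq i \leq k-1$). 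Hence $v$ is determined recursively by $v_0$, and by the pointwise identity of Steps 1--2 one has $v = v_0 \cdot u(\lambda)$; since $v \not\equiv 0$, $v_0 \neq 0$ and $u(\lambda) = v/v_0 \in l^\infty(\mathbb{N})$. The hardest part is the telescoping in Step 2: the bookkeeping is controlled by the source/range labels of the maximal edges $e_1,\ldots,e_{k-1}$ and by which of the two recurrences in \eqref{deffib} is applied at each level, with $u_{F_0} = w_{F_0}$ being the essential ingredient that reconciles the two branches at level zero.
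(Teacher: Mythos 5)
Your proposal is correct and follows essentially the same route as the paper: the paper likewise reduces the eigenvalue equation at each $N$ to the explicit form of $(Sz)_N$ (its Lemma \ref{36}), telescopes the resulting sum using the recurrences \eqref{deffib} together with $u_{F_0}=w_{F_0}$, and concludes that any eigenvector is determined by $z_0$ via $z_N=u_Nz_0$. The only cosmetic difference is that you treat the telescoping uniformly (letting the range labels $r_i$ select which recurrence to apply), whereas the paper writes out the cases $\zeta_N=1$, $\zeta_N=2$, $\zeta_N\ge 3$ and $d>0$ versus $d=0$ separately.
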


\begin{Remark} \label{obsfib}
From Theorem \ref{teorema431}, we deduce that $$\sigma_{pt}(S)\subset\{\lambda\in\mathbb{C}: (u_{F_n}(\lambda))_{n\geq 0} \textrm{ is bounded}\}.$$ Moreover, if $\det M\leq 0$, we can show (see Proposition \ref{detmenor}) that $$\sigma_{pt}(S)\subset\mathcal{E}:=\{\lambda\in\mathbb{C}: (u_{F_n}(\lambda),w_{F_n}(\lambda)) \textrm{ is bounded}\}.$$	Since, $g_n\circ\ldots\circ g_1(u_{F_0},u_{F_0})=(u_{F_n},w_{F_n})$, for all $n\geq 1$, where $g_n:\mathbb{C}^2\longrightarrow\mathbb{C}^2$ are polynomials defined by 
	\begin{center}
		$g_n(x,y)=\left(\frac{1}{p_{n+1}}x^ay^b-\frac{1-p_{n+1}}{p_{n+1}}, \frac{1}{p_{n+1}}x^cy^d-\frac{1-p_{n+1}}{p_{n+1}}\right)$, for all $n\geq 1$,
	\end{center}

\noindent if follows that $\sigma_{pt}(S)$ is contained in the set $\{\lambda\in\mathbb{C}:(\frac{\lambda-1+p_1}{p_1},\frac{\lambda-1+p_1}{p_1})\in \mathcal{K}\}$ where $$\mathcal{K}:=\{(x,y)\in\mathbb{C}^2: (g_n\circ\ldots\circ g_1(x,y))_{n\geq 1} \textrm{ is bounded} \}.$$ This set is the 2-dimensional fibered filled Julia set associated to $(g_n)_{n\ge 1}$ (for more on fibered Julia sets see \cite{s} and references therein). In particular, if $(p_i)_{i\geq 1}$ is constant, then $\mathcal{K}$ is a 2-dimensional filled Julia set.
\end{Remark}

For the proof of Theorem \ref{teorema431}, we need the following lemma.

\begin{Lemma} \label{36}
For all $z=(z_i)_{i\geq 0}\in l^\infty$,
\begin{center}
		$(Sz)_N=\displaystyle\left(\prod_{j=1}^{\zeta_N}p_j\right)z_{N+1}+(1-p_1)z_N +\displaystyle\sum_{r=1}^{\zeta_N-1}\left(\prod_{j=1}^{r}p_j\right)(1-p_{r+1})z_{N-\sum_{j=0}^{r-1}\delta_{j+1}F_j+\gamma_{j+1}G_j}$,
\end{center}
if $\zeta_N\geq 2$ and $(Sz)_N=p_1z_{N+1}+(1-p_1)z_N$ if $\zeta_N=1$, where $\delta_i,\gamma_i$ are given in Definition \ref{defdeltagamma}, for all $i\in\{1,\ldots,\zeta_N\}$. 
\end{Lemma}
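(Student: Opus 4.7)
The plan is to compute $(Sz)_N$ directly from the definition of the transition operator as the conditional expectation of $z$ at the next state:
\[
(Sz)_N \;=\; \sum_{n \ge 0} P\bigl(\widehat{V}_B(x_N) = x_n\bigr)\, z_n \, .
\]
Under Hypothesis A, Remark \ref{remark:condA} lists the possible destinations of the chain starting at $x_N$: with probability $1 - p_1$ it stays at $x_N$, with probability $\prod_{j=1}^{\zeta_N} p_j$ it moves to $V_B(x_N) = x_{N+1}$, and for each $r \in \{1, \ldots, \zeta_N - 1\}$ it moves to $y_r(x_N)$ with probability $\bigl(\prod_{j=1}^r p_j\bigr)(1 - p_{r+1})$. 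The identification $V_B(x_N) = x_{N+1}$ is immediate from the definition of the enumeration $\widetilde{X}_B = \{x_0, x_1, \ldots\}$ via iteration of $V_B$.

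The essential step is to identify $y_r(x_N)$ as $x_{M_r}$ for the correct index $M_r$. From equation \eqref{yjx}, $y_r(x_N)$ is obtained from $x_N$ by leaving the edges $e_{r+1}, e_{r+2}, \ldots$ unchanged and replacing the first $r$ edges by the unique minimal path in $E(1) \circ \cdots \circ E(r)$ with range $s(e_{r+1}(x_N))$. This minimal path consists of edges of the form $\vec{0} = (1,0,1)$, so by Definition \ref{defdeltagamma} its contribution to the $(F,G)$-representation is $(\delta_i, \gamma_i) = (0,0)$ for $1 \le i \le r$, while the $(F,G)$-digits of $y_r(x_N)$ from position $r+1$ onward coincide with those of $x_N$. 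Applying Proposition \ref{representation} to $y_r(x_N)$ then yields
\[
M_r \;=\; \sum_{i=r}^{\xi(x_N)-1}\bigl(\delta_{i+1}\, F_i + \gamma_{i+1}\, G_i\bigr) \;=\; N \,-\, \sum_{i=0}^{r-1}\bigl(\delta_{i+1}\, F_i + \gamma_{i+1}\, G_i\bigr) \, ,
\]
where the second equality is the $(F,G)$-representation of $N$ itself. Substituting $y_r(x_N) = x_{M_r}$ into the sum produces the stated formula for $(Sz)_N$ when $\zeta_N \ge 2$.

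The boundary case $\zeta_N = 1$ is immediate: then $\theta(x_N) = 0$, so $A(x_N)$ is empty and the only possible transitions are $x_N \mapsto x_N$ with probability $1 - p_1$ and $x_N \mapsto V_B(x_N) = x_{N+1}$ with probability $p_1$. The only genuinely nontrivial step in the argument is the identification of $y_r(x_N)$ with $x_{M_r}$ through the $(F,G)$-representation; everything else is direct bookkeeping from Remark \ref{remark:condA} and the construction of the enumeration of $\widetilde{X}_B$.
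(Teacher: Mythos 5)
Your proposal is correct and follows essentially the same route as the paper: identify the possible destinations from Remark \ref{remark:condA}, and use Proposition \ref{representation} to locate $y_r(x_N)$ in the enumeration as $x_{M_r}$ with $M_r = N - \sum_{i=0}^{r-1}(\delta_{i+1}F_i+\gamma_{i+1}G_i)$. One small imprecision: when $s(e_{r+1}(x_N))=2$ the minimal path in $E(1)\circ\cdots\circ E(r)$ ends with the edge $(1,0,2)$ rather than $\vec{0}=(1,0,1)$, so it does not consist entirely of edges of the form $\vec{0}$; however, by case (iii) of Definition \ref{defdeltagamma} that edge also contributes $(\delta,\gamma)=(0,0)$, so your conclusion about the $(F,G)$-digits, and hence the whole argument, stands.
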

\begin{proof}
Let $N\in\mathbb{N}$ and $V^N_B(x_0)=x=(e_1,e_2,e_3,\ldots)$. All we need to do is identify $S_{N, \tilde{N}}$ for $\tilde{N} \in \mathbb{N}$.

Let $\xi(x)=k$ and $\zeta(x)=\zeta_N$. Thus, $x=(e_1,\ldots,e_{\zeta_N-1},e_{\zeta_N},e_{\zeta_N+1},\ldots,e_k,\vec{0},\vec{0},\ldots)$ and under Hypothesis A, we have that $A(x)=\{1,\ldots,\zeta_N-1\}$.

From Definition \ref{defvbs} and Remark \ref{remark:condA}, we have that $S_{N,N}=1-p_1$, $S_{N,N+1}=\prod_{j=1}^{\zeta_N}p_j$ and $S_{N,\tilde{N}}=0$ if $\tilde{N}\notin\{N,N+1\}$.
Thus, if $\zeta_N=1$, we are done. Suppose that $\zeta_N\geq 2$.

For each $i\in A(x)$, consider $y_i(x)$ defined by relation (\ref{yjx}). We can check that
\begin{center} 
$y_i(x)=(\underbrace{\vec{0},\ldots,\vec{0}}_{i-1 \textrm{ times}},\tilde{e},e_{i+1},e_{i+2},\ldots,e_{\zeta_N-1},e_{\zeta_N},e_{\zeta_N+1},\ldots,e_k,\vec{0},\vec{0},\ldots)$,
\end{center}

\noindent where $\tilde{e}=(1,0,1)=\vec{0}$ if $s_{i+1}=1$ and $\tilde{e}=(1,0,2)$ if $s_{i+1}=2$.

For each $i\in A(x)$, let $N_i\in \mathbb{N}$, such that $V^{N_i}_B(x_0)=y_i(x)$. Thus, from Proposition \ref{representation}, we have that $N_i=N-\sum_{j=0}^{i-1}\delta_{j+1}F_j+\gamma_{j+1}G_j$. Hence, from Remark \ref{remark:condA}, we have that $S_{N,N_i}=\prod_{j=1}^ip_j(1-p_{i+1})$. Furthermore $S_{N,\tilde{N}} = 0$ if $\tilde{N} \notin \{N,N+1,N_i, i \in A(x)\}$ and the proof is finished.
\end{proof}

Our next step is to prove Theorem \ref{teorema431}. The proof uses the same idea of the case $M=(d)$, for $d\geq 2$ done in \cite{msv}. However, the extension is far from elementary.

\begin{proof}[Proof of Theorem \ref{teorema431}] Let $z=(z_N)_{N\geq 0}$ be a sequence of complex numbers such that $(Sz)_N=\lambda z_N$ for every $N\geq 0$. We shall prove that $z_N=u_N z_0$ for all $N\geq 1$. For this we need to have in mind the representation of $N$ as a path in $\widetilde{X}_B$, i.e. $x = V^N(x_0) = (e_1,...,e_{\xi(x)},\vec{0},\vec{0},...)$ where $e_j = (s_j,m_j,r_j)$, $1\le j \le \xi(x)$.

The proof is based on the representation of Lemma \ref{36}. We use induction on $N\in\mathbb{N}$.

For $N=1$ we have by definition that $\delta_1=1$, $\gamma_1=0$ and $\delta_j=\gamma_j=0$ for all $j\geq 2$. Furthermore,
\begin{center}
$\lambda z_0=(1-p_1)z_0+p_1z_1$ $\Rightarrow$ $z_1=\displaystyle\dfrac{\lambda-1+p_1}{p_1}z_0=u_1z_0$.
\end{center}

Now fix $N=((\delta_1,\gamma_1),(\delta_2,\gamma_2),\ldots)\geq1$ and suppose that $z_j=u_jz_0$ for all $j\in\{1,\ldots,N\}$. Suppose that $\zeta_N=1$. Since
\begin{center}
$z_N=z_0\prod_{i=0}^{\xi (x)-1}u_{F_i}^{\delta_{i+1}}w_{F_i}^{\gamma_{i+1}}=
u_{F_0}^{\delta_1}w_{F_0}^{\gamma_1}z_0\prod_{i=1}^{\xi(x)-1}u_{F_i}^{\delta_{i+1}}w_{F_i}^{\gamma_{i+1}}$,
\end{center}
\noindent $(Sz)_N=p_1z_{N+1}+(1-p_1)z_N=\lambda z_N$ and

\begin{equation}
u_{F_0}=w_{F_0}=\frac{\lambda-(1-p_1)}{p_1}, \label{uzerowzero}
\end{equation}
we have
\begin{center}
$z_{N+1}=u_{F_0}z_N =
u_{F_0}^{\delta_1+\gamma_1+1}z_0\prod_{i=1}^{\xi(x)-1}u_{F_i}^{\delta_{i+1}}w_{F_i}^{\gamma_{i+1}}$.
\end{center}

From here, we need to consider two cases:

\smallskip

\noindent \textbf{Case 1}: if $s_1=1$, then $0\leq \delta_1< a$ if $r_1=1$ and $0\leq \delta_1<c$ if $r_1 = 2$. Furthermore, $\gamma_1=0$. Thus, $N+1=((\delta_1+1,\gamma_1),(\delta_2,\gamma_2),\ldots)$ and $$u_{N+1}=u_{F_0}^{\delta_1+1}w_{F_0}^{\gamma_1}\prod_{i=1}^{\xi(x)-1}u_{F_i}^{\delta_{i+1}}w_{F_i}^{\gamma_{i+1}}= u_{F_0}^{\delta_1+\gamma_1+1}\prod_{i=1}^{\xi(x)-1}u_{F_i}^{\delta_{i+1}}w_{F_i}^{\gamma_{i+1}}.$$

\smallskip

\noindent \textbf{Case 2}: if $s_1=2$, then $\delta_1=a$ and $0\leq \gamma_1<b-1$ if $r_1=1$ and $\delta_1=c$ and $0\leq \gamma_1<d-1$ if $r_1=2$. Thus, $N+1=((\delta_1,\gamma_1+1),(\delta_2,\gamma_2),\ldots)$ and $$u_{N+1}=u_{F_0}^{\delta_1}w_{F_0}^{\gamma_1+1}\prod_{i=1}^{\xi(x)-1}u_{F_i}^{\delta_{i+1}}w_{F_i}^{\gamma_{i+1}}= u_{F_0}^{\delta_1+\gamma_1+1}\prod_{i=1}^{\xi(x)-1}u_{F_i}^{\delta_{i+1}}w_{F_i}^{\gamma_{i+1}}.$$

Hence, in both cases we have that $z_{N+1}=u_{N+1}z_0$.

Now for $\zeta_N \ge 2$ we consider separately the cases $d>0$ and $d=0$.

\smallskip 

\noindent \textbf{Case d>0:}

\smallskip

First, suppose that $\zeta_N=2$, i.e. $e_1=(s_1,m_1,r_1)$ is a maximal edge and $e_2$ is not maximal. Thus, by Lemma \ref{36} and the fact that $(Sz)_N=\lambda z_N$, we have

\begin{center}
	$z_{N+1}=\displaystyle \frac{1}{p_1p_2}\left((\lambda-(1-p_1))z_N - p_1(1-p_2)z_{N-\delta_1F_0-\gamma_1 G_0}\right)$.
\end{center}

Hence,

\begin{center}
$\dfrac{z_{N+1}}{z_0\prod_{r=2}^{\xi(x)-1}u_{F_r}^{\delta_{r+1}}w_{F_r}^{\gamma_{r+1}}}=
\dfrac{(\lambda-(1-p_1)) u_{F_0}^{\delta_1}w_{F_0}^{\gamma_1}u_{F_1}^{\delta_2} w_{F_1}^{\gamma_2}}{p_1p_2}- \dfrac{1-p_2}{p_2}u_{F_1}^{\delta_2}w_{F_1}^{\gamma_2}$.
\end{center}

Since $e_1$ is a maximal edge, it follows that $s_1=2$. If $r_1=1$, then $\delta_1=a$ and $\gamma_1=b-1$ and if $r_1=2$ then $\delta_1=c$ and $\gamma_1 = d-1$. Thus,

\begin{center}
$\dfrac{z_{N+1}}{z_0\prod_{r=2}^{\xi(x)-1}u_{F_r}^{\delta_{r+1}}w_{F_r}^{\gamma_{r+1}}}=\left\{\begin{array}{l}
\dfrac{\lambda-(1-p_1)}{p_1}\cdot\dfrac{u_{F_0}^aw_{F_0}^{b-1}u_{F_1}^{\delta_2} w_{F_1}^{\gamma_2}}{p_2}- \dfrac{1-p_2}{p_2}u_{F_1}^{\delta_2}w_{F_1}^{\gamma_2}  \textrm{, if } r_1 = 1 , \\
\dfrac{\lambda-(1-p_1)}{p_1}\cdot\dfrac{u_{F_0}^cw_{F_0}^{d-1}u_{F_1}^{\delta_2} w_{F_1}^{\gamma_2}}{p_2}- \dfrac{1-p_2}{p_2}u_{F_1}^{\delta_2}w_{F_1}^{\gamma_2} \textrm{, if } r_1 = 2 .\\
\end{array}\right.$\end{center}
By (\ref{uzerowzero}), we deduce
\begin{center}
$\dfrac{z_{N+1}}{z_0\prod_{r=2}^{\xi(x)-1}u_{F_r}^{\delta_{r+1}}w_{F_r}^{\gamma_{r+1}}}=\left\{\begin{array}{l}
\left(\dfrac{1}{p_2} u_{F_0}^{a+b}- \dfrac{1-p_2}{p_2}\right)u_{F_1}^{\delta_2}w_{F_1}^{\gamma_2}=u_{F_1}^{\delta_2+1} w_{F_1}^{\gamma_2}  \textrm{, if } r_1 = 1 , \\
\left(\dfrac{1}{p_2}w_{F_0}^{c+d}- \dfrac{1-p_2}{p_2}\right)u_{F_1}^{\delta_2}w_{F_1}^{\gamma_2}=u_{F_1}^{\delta_2} w_{F_1}^{\gamma_2+1} \textrm{, if } r_1 = 2, \\
\end{array}\right.$\end{center}
\noindent and so
\begin{equation}
\label{zn+1}
z_{N+1}=
	\left\{\begin{array}{l}
	z_0u_{F_1}^{\delta_2+1} w_{F_1}^{\gamma_2}\prod_{r=2}^{\xi(x)-1}u_{F_r}^{\delta_{r+1}}w_{F_r}^{\gamma_{r+1}} \textrm{, if } r_1 = 1,\\
	z_0u_{F_1}^{\delta_2} w_{F_1}^{\gamma_2+1}\prod_{r=2}^{\xi(x)-1}u_{F_r}^{\delta_{r+1}}w_{F_r}^{\gamma_{r+1}} \textrm{, if } r_1 = 2.\\
	\end{array}\right.
\end{equation}
Since
$$N=((\delta_1,\gamma_1)(\delta_2,\gamma_2)\ldots)=\left\{\begin{array}{cc}
((a,b-1)(\delta_2,\gamma_2)\ldots ) \textrm{, if } r_1=1, \\
((c,d-1)(\delta_2,\gamma_2)\ldots ) \textrm{, if } r_1=2, \\
\end{array}\right.$$
it follows that
$$N+1=\left\{\begin{array}{cc}
((0,0)(\delta_2+1,\gamma_2)\ldots) \textrm{, if } r_1=1, \\
((0,0)(\delta_2,\gamma_2+1)\ldots) \textrm{, if } r_1=2, \\
\end{array}\right.$$
and from \eqref{zn+1}, we have that $z_{N+1}=u_{N+1}z_0$.

\smallskip

Finally we have to consider $\zeta_N \geq 3$. In this case, since $(e_1,\ldots,e_{\zeta_N-1})$ is a maximal element of $E(1)\circ E(2)\circ\ldots\circ E(\zeta_N-1)$ and $d>0$, it follows that $s_j=r_j=2$ for all $j\in\{1,\ldots,\zeta_N-2\}$. Therefore, $m_j=c+d-1$ (i.e $\delta_j=c$ and $\gamma_j=d-1$) for all $j\in\{1,\ldots,\zeta_N-2\}$. Furthermore, we have two subcases: 
\begin{enumerate}
\item[(1)] if $r_{\zeta_N-1} = 1$ then $m_{\zeta_N-1}=a+b-1$ (i.e $\delta_{\zeta_N-1}=a$ and $\gamma_{\zeta_N-1}=b-1$),  
\item[(2)] if $r_{\zeta_N-1}=2$ then $m_{\zeta_N-1}=c+d-1$ (i.e $\delta_{\zeta_N-1}=c$ and $\gamma_{\zeta_N-1}=d-1$).
\end{enumerate}
Thus, by Lemma \ref{36} and Definition \ref{defdeltagamma}, since $(Sz)_N=\lambda z_N$, we have that
\begin{equation}
\dfrac{z_{N+1}}{z_0\prod_{r=\zeta_N}^{\xi(x)-1}u_{F_r}^{\delta_{r+1}}w_{F_r}^{\gamma_{r+1}}}= \label{3.7}
\end{equation}
\begin{equation}
\dfrac{[\lambda-(1-p_1)]\left[\prod_{r=0}^{\zeta_N-3}u_{F_r}^cw_{F_r}^{d-1}\right]u_{F_{\zeta_N-2}}^{\delta_{\zeta_N-1}} w_{F_{\zeta_N-2}}^{\gamma_{\zeta_N-1}}u_{F_{\zeta_N-1}}^{\delta_{\zeta_N}} w_{F_{\zeta_N-1}}^{\gamma_{\zeta_N}}}{\prod_{j=1}^{\zeta_N}p_j}- \label{3.8}
\end{equation}
\begin{center}
$\dfrac{(1-p_2)\left[\prod_{r=1}^{\zeta_N-3}u_{F_r}^cw_{F_r}^{d-1}\right]u_{F_{\zeta_N-2}}^{\delta_{\zeta_N-1}} w_{F_{\zeta_N-2}}^{\gamma_{\zeta_N-1}}u_{F_{\zeta_N-1}}^{\delta_{\zeta_N}} w_{F_{\zeta_N-1}}^{\gamma_{\zeta_N}}}{\prod_{j=2}^{\zeta_N}p_j}-\ldots- $
\end{center}
\begin{center}
$\dfrac{(1-p_{\zeta_N-1})u_{F_{\zeta_N-2}}^{\delta_{\zeta_N-1}} w_{F_{\zeta_N-2}}^{\gamma_{\zeta_N-1}}u_{F_{\zeta_N-1}}^{\delta_{\zeta_N}} w_{F_{\zeta_N-1}}^{\gamma_{\zeta_N}}}{\prod_{j=\zeta_N-1}^{\zeta_N}p_j} -\dfrac{1-p_{\zeta_N}}{p_{\zeta_N}}u_{F_{\zeta_N-1}}^{\delta_{\zeta_N}} w_{F_{\zeta_N-1}}^{\gamma_{\zeta_N}}$.
\end{center}

By (\ref{uzerowzero}), the first term in (\ref{3.8}) is equal to
\begin{center}
$\dfrac{w_{F_0}^{c+d}\left[\prod_{r=1}^{\zeta_N-3}u_{F_r}^cw_{F_r}^{d-1}\right]u_{F_{\zeta_N-2}}^{\delta_{\zeta_N-1}} w_{F_{\zeta_N-2}}^{\gamma_{\zeta_N-1}}u_{F_{\zeta_N}-1}^{\delta_{\zeta_N}} w_{F_{\zeta_N-1}}^{\gamma_{\zeta_N}}}{\prod_{j=2}^{\zeta_N}p_j}.$
\end{center}

Summing with the second term, we get

\begin{center}
$\dfrac{w_{F_0}^{c+d}-(1-p_2)}{p_2}\cdot\dfrac{\left[\prod_{r=1}^{\zeta_N-3}u_{F_r}^cw_{F_r}^{d-1}\right]u_{F_{\zeta_N-2}}^{\delta_{\zeta_N-1}} w_{F_{\zeta_N-2}}^{\gamma_{\zeta_N-1}} u_{F_{\zeta_N-1}}^{\delta_{\zeta_N}} w_{F_{\zeta_N-1}}^{\gamma_{\zeta_N}}}{\prod_{j=3}^{\zeta_N}p_j}$
\end{center}

\noindent which is equal to
\begin{eqnarray*}
\lefteqn{w_{F_1}\dfrac{\left[\prod_{r=1}^{\zeta_N-3}u_{F_r}^cw_{F_r}^{d-1}\right]u_{F_{\zeta_N-2}}^{\delta_{\zeta_N-1}} w_{F_{\zeta_N-2}}^{\gamma_{\zeta_N-1}}u_{F_{\zeta_N-1}}^{\delta_{\zeta_N}} w_{F_{\zeta_N-1}}^{\gamma_{\zeta_N}}}{\prod_{j=3}^{\zeta_N}p_j}
= } \\
& & u_{F_1}^cw_{F_1}^d\dfrac{\left[\prod_{r=2}^{\zeta_N-3}u_{F_r}^cw_{F_r}^{d-1}\right]u_{F_{\zeta_N-2}}^{\delta_{\zeta_N-1}} w_{F_{\zeta_N-2}}^{\gamma_{\zeta_N-1}} u_{F_{\zeta_N-1}}^{\delta_{\zeta_N}} w_{F_{\zeta_N-1}}^{\gamma_{\zeta_N}}}{\prod_{j=3}^{\zeta_N}p_j}.
\end{eqnarray*}
By induction, we have that the sum of the first $\zeta_N-1$ terms in (\ref{3.8}) is equal to
\begin{center}
$\dfrac{u_{F_{\zeta_N-2}}^{\delta_{\zeta_N-1}} w_{F_{\zeta_N-2}}^{\gamma_{\zeta_N-1}+1} u_{F_{\zeta_N-1}}^{\delta_{\zeta_N}} w_{F_{\zeta_N-1}}^{\gamma_{\zeta_N}}}{p_{\zeta_N}}$.
\end{center}
Finally, summing the previous expression with the last term in (\ref{3.8}) we have that (\ref{3.7}) is equal to
\begin{center}
$\left\{\begin{array}{l}
\dfrac{u_{F_{\zeta_N-2}}^aw_{F_{\zeta_N-2}}^b-(1-p_{\zeta_N})}{p_{\zeta_N}}u_{F_{\zeta_N-1}}^{\delta_{\zeta_N}} w_{F_{\zeta_N-1}}^{\gamma_{\zeta_N}}= u_{F_{\zeta_N-1}}^{\delta_{\zeta_N}+1} w_{F_{\zeta_N-1}}^{\gamma_{\zeta_N}} \textrm{, if } r_{\zeta_N - 1} = 1;\\
\dfrac{u_{F_{\zeta_N-2}}^cw_{F_{\zeta_N-2}}^d-(1-p_{\zeta_N})}{p_{\zeta_N}}u_{F_{\zeta_N-1}}^{\delta_{\zeta_N}} w_{F_{\zeta_N-1}}^{\gamma_{\zeta_N}}= u_{F_{\zeta_N-1}}^{\delta_{\zeta_N}} w_{F_{\zeta_N-1}}^{\gamma_{\zeta_N}+1} \textrm{, if } r_{\zeta_N - 1} = 2.
\end{array}\right.$
\end{center}

Therefore,

\begin{center}
$z_{N+1}=\left\{\begin{array}{l}
z_0u_{F_{\zeta_N-1}}^{\delta_{\zeta_N}+1}w_{F_{\zeta_N-1}}^{\gamma_{\zeta_N}}\prod_{r=\zeta_N}^{\xi(x)-1}u_{F_r}^{\delta_{r+1}}w_{F_r}^{\gamma_{r+1}} \textrm{, if } r_{\zeta_N - 1} = 1\\
z_0u_{F_{\zeta_N-1}}^{\delta_{\zeta_N}}w_{F_{\zeta_N-1}}^{\gamma_{\zeta_N}+1}\prod_{r=\zeta_N}^{\xi(x)-1}u_{F_r}^{\delta_{r+1}}w_{F_r}^{\gamma_{r+1}} \textrm{, if } r_{\zeta_N - 1} = 2\\
\end{array}\right.=u_{N+1}z_0$,
\end{center}

\noindent where the next equality comes from the fact that $\delta_i(N+1)=\gamma_i(N+1)=0$, for all $i\in\{1,\ldots,\zeta_N-1\}$.

\smallskip

\noindent \textbf{Case $d=0$:}

\smallskip

Suppose that $r_1=1$ and $\zeta_N$ is an odd number (the proof for the cases $r_1=2$ or $\zeta_N$ even can be dealt in the same way).

Thus, since $(e_1,\ldots,e_{\zeta_N-1})$ is a maximal element of $E(1)\circ E(2)\circ\ldots\circ E(\zeta_N-1)$, we have that $r_{2i-1} = 1$, $r_{2i} = 2$, $s_{2i} = 1$ and $s_{2i-1}= 2$, for all $i\in \{1,\ldots,\frac{\zeta_N-1}{2}\}$. Therefore, $m_{2i-1}=a+b-1$ (i.e $\delta_{2i-1}=a$ and $\gamma_{2i-1}=b-1$) and $m_{2i}=c-1$ (i.e $\delta_{2i}=c-1$ and $\gamma_{2i}=0$) for all $i\in \{1,\ldots,\frac{\zeta_N-1}{2}\}$.

For each $i\in \{0,\ldots,\zeta_N-2\}$, let $\mathbb{P}_i$ be the product defined by $\mathbb{P}_i:=\prod_{r=i}^{\zeta_N-2}u_{F_r}^{\delta{r+1}}w_{F_r}^{\gamma_{r+1}}$. Thus, we have either $\mathbb{P}_i=u_{F_i}^aw_{F_i}^{b-1}\mathbb{P}_{i+1}$ if $i$ is an even number or $\mathbb{P}_i=u_{F_i}^{c-1}\mathbb{P}_{i+1}$ if $i$ is an odd number.

By Lemma \ref{36}, since $(Sv)_N=\lambda v_N$, we have that

\begin{equation}
\dfrac{v_{N+1}}{v_0\prod_{r=\zeta_N}^{\xi(N+1)-1}u_{F_r}^{\delta_{r+1}}w_{F_r}^{\gamma_{r+1}}}= \label{4.4}
\end{equation}
\begin{equation}
\dfrac{[\lambda-(1-p_1)]\mathbb{P}_0u_{F_{\zeta_N-1}}^{\delta_{\zeta_N}} w_{F_{\zeta_N-1}}^{\gamma_{\zeta_N}}}{\prod_{j=1}^{\zeta_N}p_j}- \dfrac{(1-p_2)\mathbb{P}_1u_{F_{\zeta_N-1}}^{\delta_{\zeta_N}} w_{F_{\zeta_N-1}}^{\gamma_{\zeta_N}}}{\prod_{j=2}^{\zeta_N}p_j}-  \label{4.5}
\end{equation}
\begin{center}
	$\dfrac{(1-p_3)\mathbb{P}_2u_{F_{\zeta_N-1}}^{\delta_{\zeta_N}} w_{F_{\zeta_N-1}}^{\gamma_{\zeta_N}}}{\prod_{j=3}^{\zeta_N}p_j}-\ldots-  \dfrac{(1-p_{\zeta_N-2})\mathbb{P}_{\zeta_N-3}u_{F_{\zeta_N-1}}^{\delta_{\zeta_N}} w_{F_{\zeta_N-1}}^{\gamma_{\zeta_N}}}{\prod_{j=\zeta_N-2}^{\zeta_N}p_j}-$
\end{center}
\begin{center}
	$\dfrac{(1-p_{\zeta_N-1})\mathbb{P}_{\zeta_N-2}u_{F_{\zeta_N-1}}^{\delta_{\zeta_N}} w_{F_{\zeta_N-1}}^{\gamma_{\zeta_N}}}{\prod_{j=\zeta_N-1}^{\zeta_N}p_j} -\dfrac{1-p_{\zeta_N}}{p_{\zeta_N}}u_{F_{\zeta_N-1}}^{\delta_{\zeta_N}} w_{F_{\zeta_N-1}}^{\gamma_{\zeta_N}}$.
\end{center}

Since $u_{F_0}=w_{F_0}=\dfrac{\lambda-1+p_1}{p_1}$, the first term in (\ref{4.5}) is equal to
\begin{center}
	$\dfrac{u_{F_0}\mathbb{P}_0u_{F_{\zeta_N-1}}^{\delta_{\zeta_N}} w_{F_{\zeta_N-1}}^{\gamma_{\zeta_N}}}{\prod_{j=2}^{\zeta_N}p_j}= \dfrac{u_{F_0}^{a+b}\mathbb{P}_1u_{F_{\zeta_N-1}}^{\delta_{\zeta_N}} w_{F_{\zeta_N-1}}^{\gamma_{\zeta_N}}}{\prod_{j=2}^{\zeta_N}p_j}.$
\end{center}

Summing with the second term, we get
\begin{center}
	$\dfrac{u_{F_0}^{a+b}-(1-p_2)}{p_2}\dfrac{\mathbb{P}_1u_{F_{\zeta_N-1}}^{\delta_{\zeta_N}} w_{F_{\zeta_N-1}}^{\gamma_{\zeta_N}}}{\prod_{j=3}^{\zeta_N}p_j}=u_{F_1} \dfrac{\mathbb{P}_1u_{F_{\zeta_N-1}}^{\delta_{\zeta_N}} w_{F_{\zeta_N-1}}^{\gamma_{\zeta_N}}}{\prod_{j=3}^{\zeta_N}p_j}= u_{F_1}^c \dfrac{\mathbb{P}_2u_{F_{\zeta_N-1}}^{\delta_{\zeta_N}} w_{F_{\zeta_N-1}}^{\gamma_{\zeta_N}}}{\prod_{j=3}^{\zeta_N}p_j}$.
\end{center}

Summing with the third term, we get
\begin{center}
	$\dfrac{u_{F_1}^c-(1-p_3)}{p_3}\dfrac{\mathbb{P}_2u_{F_{\zeta_N-1}}^{\delta_{\zeta_N}} w_{F_{\zeta_N-1}}^{\gamma_{\zeta_N}}}{\prod_{j=4}^{\zeta_N}p_j}=w_{F_2} \dfrac{\mathbb{P}_2u_{F_{\zeta_N-1}}^{\delta_{\zeta_N}} w_{F_{\zeta_N-1}}^{\gamma_{\zeta_N}}}{\prod_{j=4}^{\zeta_N}p_j}= u_{F_2}^aw_{F_2}^b \dfrac{\mathbb{P}_3u_{F_{\zeta_N-1}}^{\delta_{\zeta_N}} w_{F_{\zeta_N-1}}^{\gamma_{\zeta_N}}}{\prod_{j=4}^{\zeta_N}p_j}$.
\end{center}

By induction we have that the sum of the first $\zeta_N-1$ terms in (\ref{4.5}) is equal to

\begin{center}
	$\dfrac{u_{F_{\zeta_N-3}}^aw_{F_{\zeta_N-3}}^b-(1-p_{\zeta_N-1})}{p_{\zeta_N-1}}\dfrac{\mathbb{P}_{\zeta_N-2}u_{F_{\zeta_N-1}}^{\delta_{\zeta_N}} w_{F_{\zeta_N-1}}^{\gamma_{\zeta_N}}}{\prod_{j=\zeta_N}^{\zeta_N}p_j}=u_{F_{\zeta_N-2}}^c \dfrac{u_{F_{\zeta_N-1}}^{\delta_{\zeta_N}} w_{F_{\zeta_N-1}}^{\gamma_{\zeta_N}}}{p_{\zeta_N}}$.
\end{center}

Finally, summing the previous expression with the last term in (\ref{4.5}) we have that (\ref{4.4}) is equal to
\begin{center}
	$\dfrac{u_{F_{\zeta_N-2}}^c-(1-p_{\zeta_N})}{p_{\zeta_N}}u_{F_{\zeta_N-1}}^{\delta_{\zeta_N}} w_{F_{\zeta_N-1}}^{\gamma_{\zeta_N}}= u_{F_{\zeta_N-1}}^{\delta_{\zeta_N}} w_{F_{\zeta_N-1}}^{\gamma_{\zeta_N}+1}$.
\end{center}

Therefore,

\begin{center}
	$v_{N+1}=v_0u_{F_{\zeta_N-1}}^{\delta_{\zeta_N}}w_{F_{\zeta_N-1}}^{\gamma_{\zeta_N}+1}\prod_{r=\zeta_N}^{\xi(N+1)-1}u_{F_r}^{\delta_{r+1}}w_{F_r}^{\gamma_{r+1}}=u_{N+1}v_0$,
\end{center}

\noindent where the last equality comes from the fact that $\delta_i(N+1)=\gamma_i(N+1)=0$, for all $i\in\{1,\ldots,\zeta_N-1\}$.
\end{proof}

\begin{Proposition} \label{proprop}
Let $\mathcal{F}:=\{\lambda\in\mathbb{C}:(u_{F_n}(\lambda))_{n\geq 0} \textrm{ is bounded}\}$. Then $\sigma_{pt}(S)\subset\mathcal{F}\subset\sigma_a(S)$.
\end{Proposition}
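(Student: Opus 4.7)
The plan is to establish the two inclusions separately. The first, $\sigma_{pt}(S)\subset\mathcal{F}$, is immediate from Theorem \ref{teorema431}: the sequence $(u_{F_n}(\lambda))$ is a subsequence of $(u_n(\lambda))$ (since $v_{F_n}=u_{F_n}$), so boundedness of the latter implies boundedness of the former. For the inclusion $\mathcal{F}\subset\sigma_a(S)$ I would fix $\lambda\in\mathcal{F}$, say $|u_{F_n}(\lambda)|\le C$ for all $n$, and first dispense with the easy case in which the whole sequence $(u_N(\lambda))_N$ is bounded: Theorem \ref{teorema431} then gives $\lambda\in\sigma_{pt}(S)\subset\sigma_a(S)$ directly. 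Otherwise $M_n:=\max_{0\le N<F_n}|u_N|$ tends to infinity as $n\to\infty$, and I would construct approximate eigenvectors by truncation: define $v^{(n)}\in\ell^\infty(\mathbb{N})$ by $v^{(n)}_N=u_N/M_n$ for $N<F_n$ and $v^{(n)}_N=0$ for $N\ge F_n$, so that $\|v^{(n)}\|_\infty=1$. The aim becomes $\|(S-\lambda I)v^{(n)}\|_\infty\to 0$.

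I would estimate $(Sv^{(n)}-\lambda v^{(n)})_N$ in three regimes. For $0\le N\le F_n-2$, Lemma \ref{36} expresses $(Sv^{(n)})_N$ as a combination of entries $v^{(n)}_M$ with $M\le N+1\le F_n-1$, entirely inside the truncation window; the identity $(Sv^{(n)})_N=\lambda v^{(n)}_N$ then follows from the same manipulations carried out in the proof of Theorem \ref{teorema431}. For $N=F_n-1$, the path $V_B^{F_n-1}(x_0)$ is the maximal path ending at $(n,1)$, so $\zeta=n+1$ and every back-transition $y_r(x)$ lands in $\{0,\ldots,F_n-1\}$; only the forward step to index $F_n$ escapes the window, producing the single error
$$(Sv^{(n)})_{F_n-1}-\lambda v^{(n)}_{F_n-1}=-\Big(\prod_{j=1}^{n+1}p_j\Big)\frac{u_{F_n}}{M_n},$$
bounded in absolute value by $C/M_n$.

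The technical heart is the regime $N\ge F_n$. Writing $x=V_B^N(x_0)$, one has $\xi(x)\ge n+1$, and under Hypothesis A, $\zeta(x)\le\xi(x)+1$ because $\vec{0}$ is never a maximal edge. I would prove the key claim that among the back-transition targets $y_r(x)$ indexed by $r\in A(x)=\{1,\ldots,\zeta(x)-1\}$, those with index $<F_n$ are extremely restricted: if $r<\xi(x)$, the nontrivial edge $e_{\xi(x)}$ of level greater than $n$ persists in the tail of $y_r(x)$, and Definition \ref{defdeltagamma} together with Proposition \ref{representation} force the target index to be at least $F_{\xi(x)-1}\ge F_n$; hence the only possibility is $r=\xi(x)$ with $\zeta(x)=\xi(x)+1$, in which case $y_r(x)=x_0$. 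Since forward and stay transitions contribute zero (as $v^{(n)}$ vanishes past index $F_n-1$), this yields
$$|(Sv^{(n)})_N|\le\frac{1}{M_n}\prod_{j=1}^{\xi(x)}p_j(1-p_{\xi(x)+1})\le\frac{1}{M_n}.$$
Combining the three regimes gives $\|(S-\lambda I)v^{(n)}\|_\infty\le(C+1)/M_n\to 0$, so $\lambda\in\sigma_a(S)$. The main obstacle is precisely this back-transition analysis: one must carefully track how $y_r$ redistributes the $(F,G)$-digits of a path while preserving its tail above level $n$, and identify $x_0$ as the only target that can leak into the truncation window.
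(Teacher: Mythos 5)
Your proposal is correct and follows essentially the same route as the paper: truncate the formal eigenvector at index $F_n$, normalize, and show the truncation is an approximate eigenvector because $(u_{F_n})$ stays bounded while the normalizing maximum blows up, the only leakage for rows $N\ge F_n$ being the back-transition to $x_0$ (the paper reaches the same conclusion by a case analysis on $a>1$ versus $a=1$, while you argue via the $(F,G)$-digit representation, but the content is identical). The one cosmetic difference is that the paper keeps the index $F_n$ inside the truncated vector, which removes your separate treatment of the row $N=F_n-1$ at the cost of an extra $|u_{F_n}|$ term in the final bound.
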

\begin{proof} By Theorem \ref{teorema431}, $\sigma_{pt}(S)\subset\mathcal{F}$ and we only have to prove that $\mathcal{F}\subset\sigma_a(S)$. 

Let $\lambda\in \mathcal{F}$ and suppose that
	$\lambda\notin\sigma_{pt}(S)$. We will prove that
	$\lambda\in\sigma_a(S)$. In fact, for each $k\geq 2$, consider
	
	\begin{center}
		$x^{(k)}=(x^{(k)}_0,x^{(k)}_1,x^{(k)}_2,\ldots,x^{(k)}_k,0,0,\ldots)=
		(1,u_1(\lambda),u_2(\lambda),\ldots,u_k(\lambda),0,0,\ldots)$,
	\end{center}
	
	\noindent where $(u_n(\lambda))_{n \geq 1}=(u_n)_{n \geq 1}$ is the
	sequence defined in relation (\ref{deffib}). Define $$y^{(k)}:=\frac{x^{(k)}}{\lVert
		x^{(k)}\rVert_{\infty}}.$$
	
	\noindent \textbf{Claim:} $\displaystyle\lim_{n\to+\infty}\lVert(S-\lambda
	I)y^{(F_n)}\rVert_\infty=0$.
	
	\noindent In fact, for all $i\in\{0,\ldots,k-1\}$, we have $((S-\lambda
	I)y^{(k)})_i=0$ and $y_i=0$, for all $i>k$. Hence, note that
	\begin{center}
		$\|(S-\lambda I)y^{(k)}\|_\infty= \displaystyle\sup_{i\geq
			0}\left|\sum_{j=0}^{+\infty}(S-\lambda I)_{ij}y_j^{(k)}\right|=
		\displaystyle\sup_{i\geq
			k}\left\{\frac{\left|\sum_{j=0}^k(S-\lambda
			I)_{ij}x_j^{(k)}\right|}{\| x^{(k)}\|_\infty}\right\}$.
	\end{center}
	
Let $n>1$, $k=F_n\geq 2$ and $i\geq k$. We consider two cases:

\smallskip
	
\noindent \textbf{Case $a>1$:}

\begin{itemize}
\item If $i=F_n$, then since $a>1$, by relation (\ref{vfnx0}) we have that  $V_B^{F_n}(x_0)=(\underbrace{\vec{0},\ldots,\vec{0}}_{n},(1,1,1),\vec{0},\vec{0},\ldots)$. Since $n>1$, it follows that $S_{i,j}=0$, for all $j\in\{0,\ldots, F_n-1\}$ and $S_{i,i}=1-p_1$. Therefore,
	$\left|\sum_{j=0}^{F_n}(S-\lambda
	I)_{ij}x_j^{(F_n)}\right|=|1-p_1-\lambda||u_{F_n}|$.
	
\item If $F_n<i\leq 2F_n-1$, then since $a>1$, by the proof of Proposition \ref{representation}, we have that  $V_B^i(x_0)=(e_1,\ldots,e_n,(1,1,1),\vec{0},\vec{0},\ldots)$. Hence $S_{i,j}=0$, for all
$j\in\{0,\ldots, F_n-1\}$. Furthermore, since $S_{i,i}=1-p_1$ and $S$ is a stochastic matrix, it follows that $S_{i,j}\leq p_1$, for $j=F_n$.
	Therefore, $\left|\sum_{j=0}^{F_n}(S-\lambda I)_{ij}x_j^{(F_n)}\right|\leq
	p_1|u_{F_n}|$.

\item  If $i\geq 2F_n$, then $V_B^i(x_0)=(e_1,\ldots,e_l,\vec{0},\vec{0},\ldots)$, with $e_l\neq \vec{0}$ and $l\geq n+1$. Since $a>1$, we have $m_l>0$ and so $S_{i,j}=0$, for all $j\in\{1,\ldots, F_n\}$. Furthermore, $$p_1\geq S_{i,0}=\left\{\begin{array}{cl}
p_1\ldots p_l(1-p_{l+1}), & \textrm{if } (e_1,\ldots,e_l) \textrm{ is a maximal path;} \\
0, & \textrm{if is not.}
\end{array}\right.$$ Therefore,
	$\left|\sum_{j=0}^{F_n}(S-\lambda I)_{ij}x_j^{(F_n)}\right|\leq
	p_1|x_0|=p_1$.
\end{itemize}

\noindent \textbf{Case $a=1$:}

\begin{itemize}
\item If $i=F_n$ then $S_{i,j}=0$, for all $j\in\{0,\ldots,
F_n-1\}$, and $S_{i,i}=1-p_1$. Therefore,
$\left|\sum_{j=0}^{F_n}(S-\lambda
I)_{ij}x_j^{(F_n)}\right|=|1-p_1-\lambda||u_{F_n}|$.

\item If $F_n<i<F_n+G_n-1$ then $S_{i,j}=0$, for all
$j\in\{0,\ldots, F_n-1\}$, and $S_{i,j}\leq p_1$, for $j=F_n$.
Therefore, $\left|\sum_{j=0}^{F_n}(S-\lambda I)_{ij}x_j^{(F_n)}\right|\leq
p_1|u_{F_n}|$.

\item If $i=F_n+G_n-1$ then $S_{i,j}=0$, for all
$j\in\{1,\ldots, F_n-1\}$, $S_{i,j}\leq p_1$ for $j=F_n$, $S_{i,0}\leq p_1$ if $b=1$ and $S_{i,0}=0$ if $b>1$. Therefore,
$\left|\sum_{j=0}^{F_n}(S-\lambda I)_{ij}x_j^{(F_n)}\right|\leq
p_1+p_1|u_{F_n}|$.

\item If $i\geq F_n+G_n$ then $S_{i,j}=0$, for all
$j\in\{1,\ldots, F_n\}$, and $S_{i,j}\leq p_1$, for $j=0$. Therefore,
$\left|\sum_{j=0}^{F_n}(S-\lambda I)_{ij}x_j^{(F_n)}\right|\leq
p_1$.
\end{itemize}

	Hence, from both cases it follows that
	\begin{equation}
	\|(S-\lambda I)y^{(F_n)}\|_\infty\leq
	\frac{|1-p_1-\lambda||u_{F_n}|+p_1|u_{F_n}|+p_1}{\|
		x^{(F_n)}\|_\infty}. \label{desi}
	\end{equation}
	Since $\lambda\in \mathcal{F}$ and $\lambda\notin\sigma_{pt}(S)$, it follows
	that $(u_{F_n})_{n\geq 0}$ is a bounded sequence and $(u_n)_{n\geq
		0}$ is not. Therefore, we have $\lim_{n\to+\infty}\lVert
	x^{(F_n)}\rVert_\infty=+\infty$, which implies from relation (\ref{desi}) that
	$\displaystyle\lim_{n\to+\infty}\lVert(S-\lambda
	I)y^{(F_n)}\rVert_\infty=0$. Therefore,
	$\lambda\in\sigma_a(S)\subset\sigma(S)$.
\end{proof}

\begin{Proposition}
If $\det M=ad-bc\leq0$, then $(u_{F_n})_{n\geq 0}$ is bounded if and only if $(w_{F_n})_{n\geq 0}$ is bounded. \label{detmenor}
\end{Proposition}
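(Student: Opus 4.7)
My plan rests on two algebraic identities coming from the matrix structure of $M$. Set $A_n := p_{n+1}u_{F_n}+(1-p_{n+1})$ and $B_n := p_{n+1}w_{F_n}+(1-p_{n+1})$. By the recursions \eqref{deffib}, $A_n = u_{F_{n-1}}^{a}\,w_{F_{n-1}}^{b}$ and $B_n = u_{F_{n-1}}^{c}\,w_{F_{n-1}}^{d}$, so matching exponents in $B_n^{b}/A_n^{d}$ and $A_n^{c}/B_n^{a}$ gives
\[
B_n^{\,b}=A_n^{\,d}\,u_{F_{n-1}}^{\,bc-ad},\qquad A_n^{\,c}=B_n^{\,a}\,w_{F_{n-1}}^{\,bc-ad},\qquad n\ge 1.
\]
Writing $e:=bc-ad$, the hypothesis $\det M=ad-bc\le 0$ is exactly $e\ge 0$. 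This is the only place where the sign condition enters: it guarantees that every exponent on the right-hand sides is a nonnegative integer, so taking absolute values preserves the identities without introducing unbounded negative powers.

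First I would prove the forward implication. Assume $|u_{F_n}|\le K$ for all $n$, where $K\ge 1$. Since $A_n$ is a convex combination of $u_{F_n}$ and $1$ with weight $p_{n+1}\in(0,1)$, the triangle inequality gives $|A_n|\le p_{n+1}K+(1-p_{n+1})\le K$. Plugging this and the bound on $|u_{F_{n-1}}|$ into the first identity produces the uniform estimate
\[
|B_n|^{b}\;\le\;|A_n|^{d}\,|u_{F_{n-1}}|^{e}\;\le\;K^{\,d+e},
\]
hence $(B_n)$ is uniformly bounded in $n$. Inverting $B_n=p_{n+1}w_{F_n}+(1-p_{n+1})$ then returns a uniform bound on $w_{F_n}$. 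The reverse implication is the mirror image: assuming $|w_{F_n}|$ is bounded one obtains a uniform bound on $B_n$, and the second identity with $e\ge 0$ yields a uniform bound on $A_n$, hence on $u_{F_n}$.

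The main obstacle, and the step I expect to require the most care, is the final inversion passing from a uniform bound on $B_n$ to one on $w_{F_n}$: the direct estimate $|w_{F_n}|\le (|B_n|+1)/p_{n+1}$ carries a $1/p_{n+1}$ factor that need not be uniformly controlled. To close this gap one has to upgrade the bound on $B_n$ to the finer scaling statement $|B_n-1|=O(p_{n+1})$, extracted from the decomposition
\[
B_n^{\,b}-1\;=\;A_n^{\,d}\bigl(u_{F_{n-1}}^{\,e}-1\bigr)+\bigl(A_n^{\,d}-1\bigr),
\]
combined with $A_n-1=p_{n+1}(u_{F_n}-1)$ and an inductive handle on $u_{F_{n-1}}-1$ via the same identity at level $n-1$. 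Once this cancellation of $p$-factors is in place, the recurrence closes and both directions of the equivalence fall out from the algebraic identities above.
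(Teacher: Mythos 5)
Your two identities are exactly the ones the paper uses (your $A_n,B_n$ are the paper's $R_n,S_n$), and the deduction that $e=bc-ad\ge 0$ forces $(B_n)$ to be uniformly bounded whenever $(u_{F_n})$ is --- via $|B_n|^b=|A_n|^d\,|u_{F_{n-1}}|^e\le K^{d+e}$, together with the symmetric estimate for the converse --- is the same computation the paper leaves implicit. Up to that point your argument is correct and coincides with the paper's. You have also put your finger on precisely the step the paper disposes of in a single clause (``since $ad-bc\le 0$ and $(p_n)_{n\ge 1}$ is bounded, we obtain the result''): recovering a bound on $w_{F_n}=(B_n-(1-p_{n+1}))/p_{n+1}$ from a bound on $B_n$ costs a factor $1/p_{n+1}$, which is harmless only if $\inf_n p_n>0$ (for instance constant $p$, as the paper assumes in the later topological section). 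Under that extra assumption your proof is complete and is essentially the paper's proof.

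The proposed repair for general $(p_n)$, however, does not work. In the decomposition $B_n^b-1=A_n^d\bigl(u_{F_{n-1}}^e-1\bigr)+\bigl(A_n^d-1\bigr)$ the second term is indeed $O(p_{n+1})$, because $A_n-1=p_{n+1}(u_{F_n}-1)$; but the first term is not. Boundedness of $(u_{F_n})$ gives no control on $|u_{F_{n-1}}^e-1|$ beyond $O(1)$: $u_{F_{n-1}}$ is just a bounded complex number with no reason to be near $1$, and the identity at level $n-1$ only tells you that $A_{n-1}$ (not $u_{F_{n-1}}$) lies within $O(p_n)$ of $1$, so the induction you sketch has nothing to propagate. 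When $e>0$ the term $A_n^d(u_{F_{n-1}}^e-1)$ is therefore generically of order $1$. There is also a secondary problem: even a bound $|B_n^b-1|=O(p_{n+1})$ would only place $B_n$ near \emph{some} $b$-th root of unity, not near $1$, so one could not conclude $|B_n-1|=O(p_{n+1})$ without a further separation argument. In short, your proof establishes the equivalence of boundedness of $(A_n)$ and of $(B_n)$, exactly as the paper does, but the passage from there to the stated equivalence for $(u_{F_n})$ and $(w_{F_n})$ remains open unless the $p_n$ are bounded away from zero --- a gap which, to be fair, the paper's own one-line conclusion glosses over at the very same spot.
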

\begin{proof}
Let $R_n=p_{n+1}u_{F_n}+1-p_{n+1}$ and $S_n=p_{n+1}w_{F_n}+1-p_{n+1}$, for all $n\geq 0$. By \eqref{deffib}, we have that
$$R_{n+1}^c=S_{n+1}^aw_{F_n}^{bc-ad} \textrm{ and } S_{n+1}^b=R_{n+1}^du_{F_n}^{bc-ad}.$$
Since $ad-bc\leq 0$ and $(p_n)_{n\geq 1}$ is bounded, we obtain the result.
\end{proof}

\begin{question}
If $\det M>0$, is $(u_{F_n})_{n\geq 0}$ bounded equivalent to $(w_{F_n})_{n\geq 0}$ bounded?
\end{question}

\begin{Remark} \label{remark417}
From Remark \ref{obsfib} and Propositions \ref{proprop} and \ref{detmenor}, we have that if $\det M\leq 0$, then \begin{center}$\sigma_{pt}(S)\subset\mathcal{E}=\{\lambda\in\mathbb{C}:(\frac{\lambda-1+p_1}{p_1},\frac{\lambda-1+p_1}{p_1})\in \mathcal{K}\}\subset \sigma_a(S)$.\end{center}
\end{Remark}

\begin{Remark}
	If $e:=a+b=c+d$, then we have $F_n=G_n=e^n$, for all $n\geq 0$. In this case, the Vershik map is related to addition of $1$ in base $e\geq 2$, see Remark \ref{remark:uniq} and Example \ref{example:cantor}. For this class, it was proved in \cite{msv} that the point spectrum of $S$ is equal to the fibered filled Julia set of $f_n(x)=\frac{1}{p_{n+1}}x^{a+b}-\left(\frac{1}{p_{n+1}}-1\right)$. In the next proposition we will prove the same result cited below.
\end{Remark}

\begin{Proposition} \label{teoremaab=cd}
	If $a+b=c+d$ then $\sigma_{pt}(S)=\mathcal{E}$. Furthermore, $\mathcal{E}=\{\lambda\in\mathbb{C}:(f_n\circ\ldots\circ f_1(u_{F_0}))_{n\geq 1} \textrm{ is bounded}\}$, where $f_n(x)=\frac{1}{p_{n+1}}x^{a+b}-\left(\frac{1}{p_{n+1}}-1\right)$, for all $n\geq 1$.
\end{Proposition}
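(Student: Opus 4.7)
My plan is to exploit the collapse of the two recursions in \eqref{deffib} caused by $a+b=c+d$, which reduces the setting to the base-$e$ Cantor numeration, and then to finish by a direct confinement argument for the fibered filled Julia set.

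First I would prove by induction that $u_{F_n}(\lambda)=w_{F_n}(\lambda)$ for every $n\ge 0$ and every $\lambda\in\mathbb{C}$. The case $n=0$ is the common definition, and if $u_{F_{n-1}}=w_{F_{n-1}}$ then, writing $e:=a+b=c+d$, both formulas in \eqref{deffib} become
\[
u_{F_n}=w_{F_n}=\frac{1}{p_{n+1}}u_{F_{n-1}}^{e}-\frac{1-p_{n+1}}{p_{n+1}}=f_n(u_{F_{n-1}}),
\]
which iterated gives $u_{F_n}=f_n\circ\cdots\circ f_1(u_{F_0})$. Combined with $u_{F_n}=w_{F_n}$, this immediately identifies $\mathcal{E}$ from Remark \ref{obsfib} with $\mathcal{F}$ and with $\{\lambda:(f_n\circ\cdots\circ f_1(u_{F_0}))_{n\ge 1}\text{ bounded}\}$, settling the second assertion of the proposition.

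For the identity $\sigma_{pt}(S)=\mathcal{E}$, Proposition \ref{proprop} already yields $\sigma_{pt}(S)\subset\mathcal{F}=\mathcal{E}$, so by Theorem \ref{teorema431} it only remains to prove that boundedness of $(u_{F_n})$ forces boundedness of $(u_n)$. Lemma \ref{lemmaseq} gives $F_i=G_i=e^i$, and a case-by-case check in Definition \ref{defdeltagamma} yields $\delta_{j+1}+\gamma_{j+1}\le e-1$. Combined with $u_{F_i}=w_{F_i}$, Proposition \ref{representation} produces the clean expansions
\[
N=\sum_{i=0}^{\xi(N)-1}(\delta_{i+1}+\gamma_{i+1})\,e^i, \qquad u_N=\prod_{i=0}^{\xi(N)-1}u_{F_i}^{\delta_{i+1}+\gamma_{i+1}},
\]
with exponents in $\{0,\ldots,e-1\}$.

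The main obstacle is that the exponents can sum to roughly $(e-1)\log_e N$, so a mere uniform bound on $(u_{F_n})$ does not automatically control the product; one needs the sharper conclusion $|u_{F_n}|\le 1$ for every $n$. I will establish this by the superexpanding estimate $|f_n(x)|\ge |x|^{e}$ on $\{|x|\ge 1\}$: the reverse triangle inequality gives $|f_n(x)|\ge |x|^{e}/p_{n+1}-(1-p_{n+1})/p_{n+1}$, and this last quantity dominates $|x|^{e}$ exactly when $(|x|^{e}-1)(1-p_{n+1})\ge 0$, which is trivial for $|x|\ge 1$ and $p_{n+1}<1$. Iterating, if $|u_{F_{n_0}}|>1$ for some $n_0$, then $|u_{F_{n_0+k}}|\ge |u_{F_{n_0}}|^{e^{k}}\to\infty$, contradicting boundedness. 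Hence $|u_{F_n}|\le 1$ for every $n$, and the product formula yields $|u_N|\le 1$, which completes the proof.
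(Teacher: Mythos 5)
Your proposal is correct and follows essentially the same route as the paper: both reduce to showing $u_{F_n}=w_{F_n}=f_n\circ\cdots\circ f_1(u_{F_0})$, both prove that boundedness forces $|u_{F_n}|\le 1$ by the same superexpansion estimate (if some $|u_{F_k}|=R>1$ then $|u_{F_{k+i}}|$ grows at least like $R^{e^i}$, giving a contradiction), and both then bound $|u_N|$ by the product formula. Your explicit remark that a mere uniform bound on $(u_{F_n})$ would not suffice, and the clean verification of $|f_n(x)|\ge|x|^{e}$ on $\{|x|\ge 1\}$, are nice expository touches but not a different argument.
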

\begin{proof}
From Theorem \ref{teorema431} and Remark \ref{obsfib} we have that $\sigma_{pt}(S)\subset \mathcal{E}$.
	
Let $\lambda\in\mathcal{E}$. Since $a+b=c+d$, it follows from \eqref{deffib} that $u_{F_n}(\lambda)=w_{F_n}(\lambda)$, for all $\lambda\in\mathbb{C}$ and $n\geq 1$. Thus, it follows that $|u_{F_n}(\lambda)|,|w_{F_n}(\lambda)|\leq 1$ for all $n\geq 0$, indeed let $R>1$ be a real number such that $|u_{F_k}|=|w_{F_k}|>R$. Since $abc>0$ and $c+d>1$, it follows that $\min\{a+b,c+d\} > 1$. Thus,
$|u_{F_{k+1}}|=|w_{F_{k+1}}|>\frac{1}{p_{k+2}}R^{a+b} -\frac{1-p_{k+2}}{p_{k+2}}>R^{a+b}\geq R^2$.

By induction we obtain that $|u_{F_{k+i}}|=|w_{F_{k+i}}|>R^{2^i}$, for all $i\geq 1$. Since $R>1$, it follows that $(u_{F_n})_{n\geq 0}$ and $(w_{F_n})_{n\geq 0}$ are unbounded and $\lambda \notin \mathcal{E}$ which yields a contradiction.

Therefore, if $\lambda\in\mathcal{E}$ and then $|u_{F_n}(\lambda)|,|w_{F_n}(\lambda)|\leq 1$ for all $n\geq 0$, by \eqref{deffib}, we have that $|u_n(\lambda)|\leq 1$, for all $n\geq 1$, i.e. $\lambda\in\sigma_{pt}(S)$.
	
To prove that $\mathcal{E}=\{\lambda\in\mathbb{C}:(f_n\circ\ldots\circ f_1(u_{F_0}))_{n\geq 1} \textrm{ is bounded}\}$, we just need to observe that $f_n\circ\ldots\circ f_1(u_{F_0}(\lambda))=u_{F_n}(\lambda)=w_{F_n}(\lambda)$, for all $n\geq 1$.
\end{proof}

\begin{Remark}
In Proposition \ref{teoremaab=cd} we have proved that if $\min\{|u_{F_i}|,|w_{F_i}|\}> 1$ for some integer $i$, then $\min\{|u_{F_n}|,|w_{F_n}|\}$ goes to $\infty$ when $n$ goes to infinity.
\end{Remark}

\begin{question}
If $\det M\leq 0$, can we prove that $\sigma_{pt}(S)=\mathcal{E}=\sigma(S)$?
\end{question}

\begin{Example} 
	Consider the consecutive ordering Bratteli diagram $B$ represented by the matrix $M=\left(\begin{array}{cc} 3 & 1 \\ 1 & 2 \end{array}\right)$. Below, we present some pictures describing the set $\mathcal{E}=\{\lambda\in\mathbb{C}: (u_{F_n}(\lambda),w_{F_N}(\lambda))_{n\geq 0} \textrm{ is bounded}\}$ for some choices of $(p_i)_{i\ge 1}$.
	\begin{figure}[h!]
		\centering
		\includegraphics[scale=0.37]{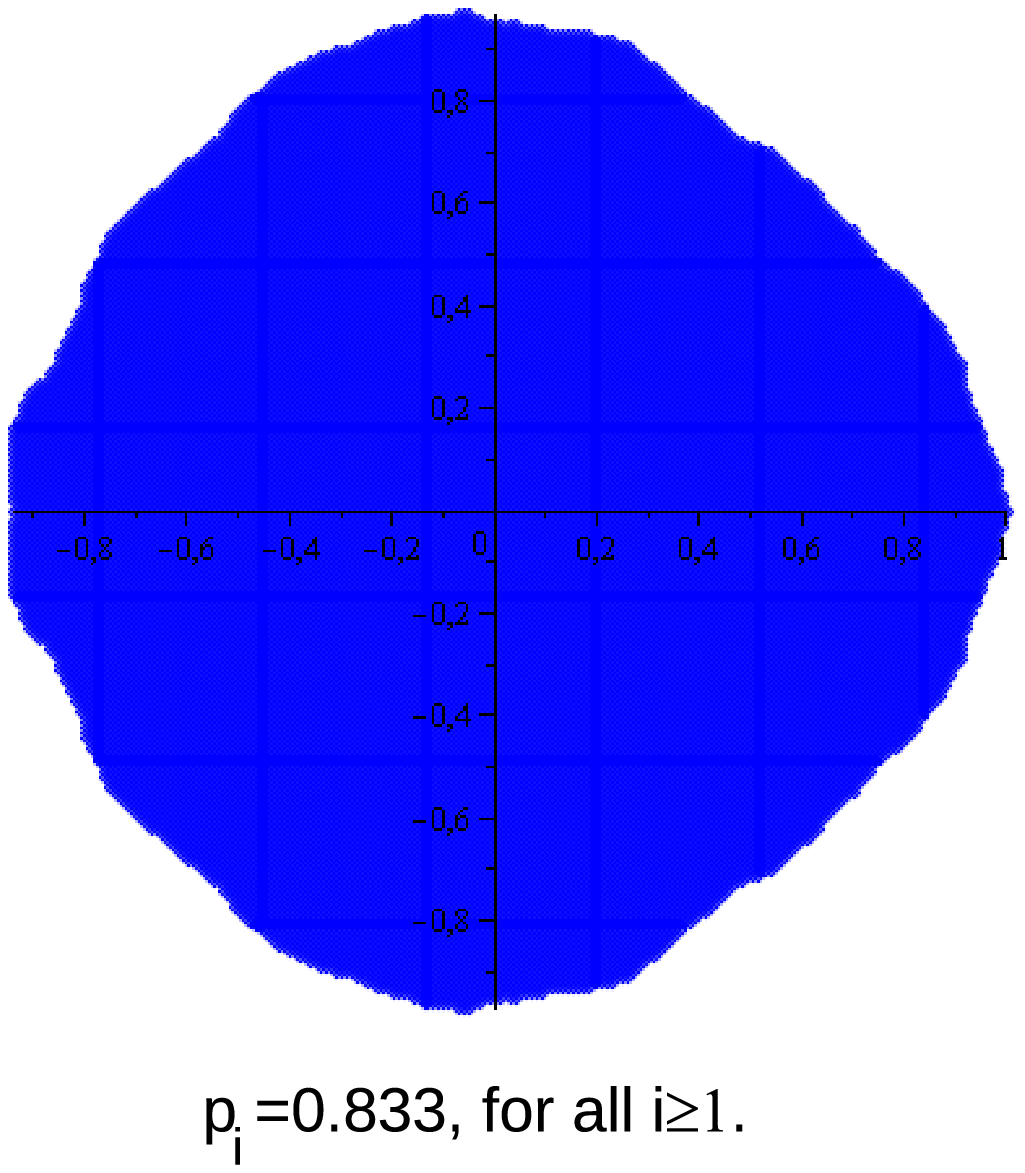}
		\includegraphics[scale=0.37]{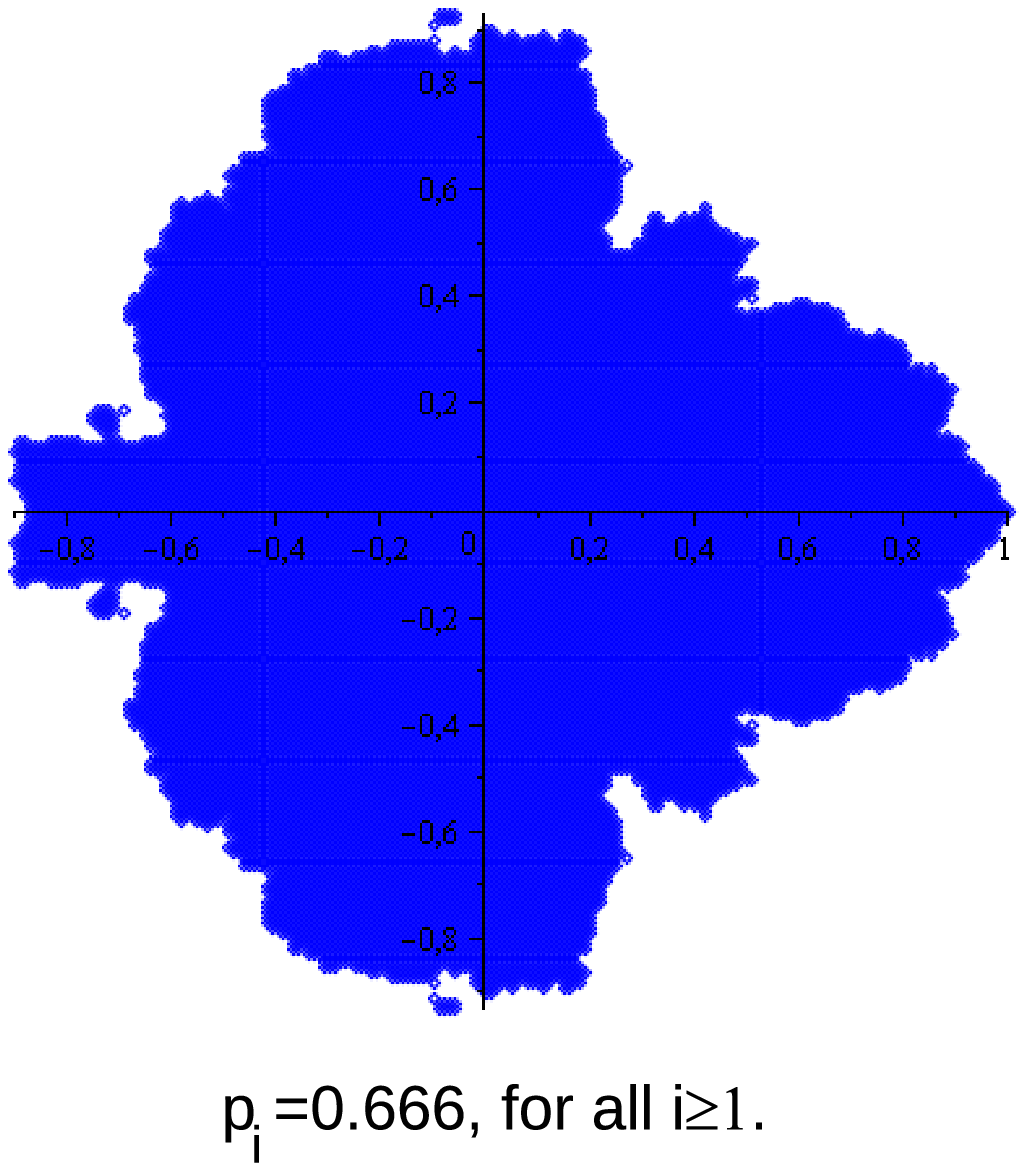}
		\includegraphics[scale=0.37]{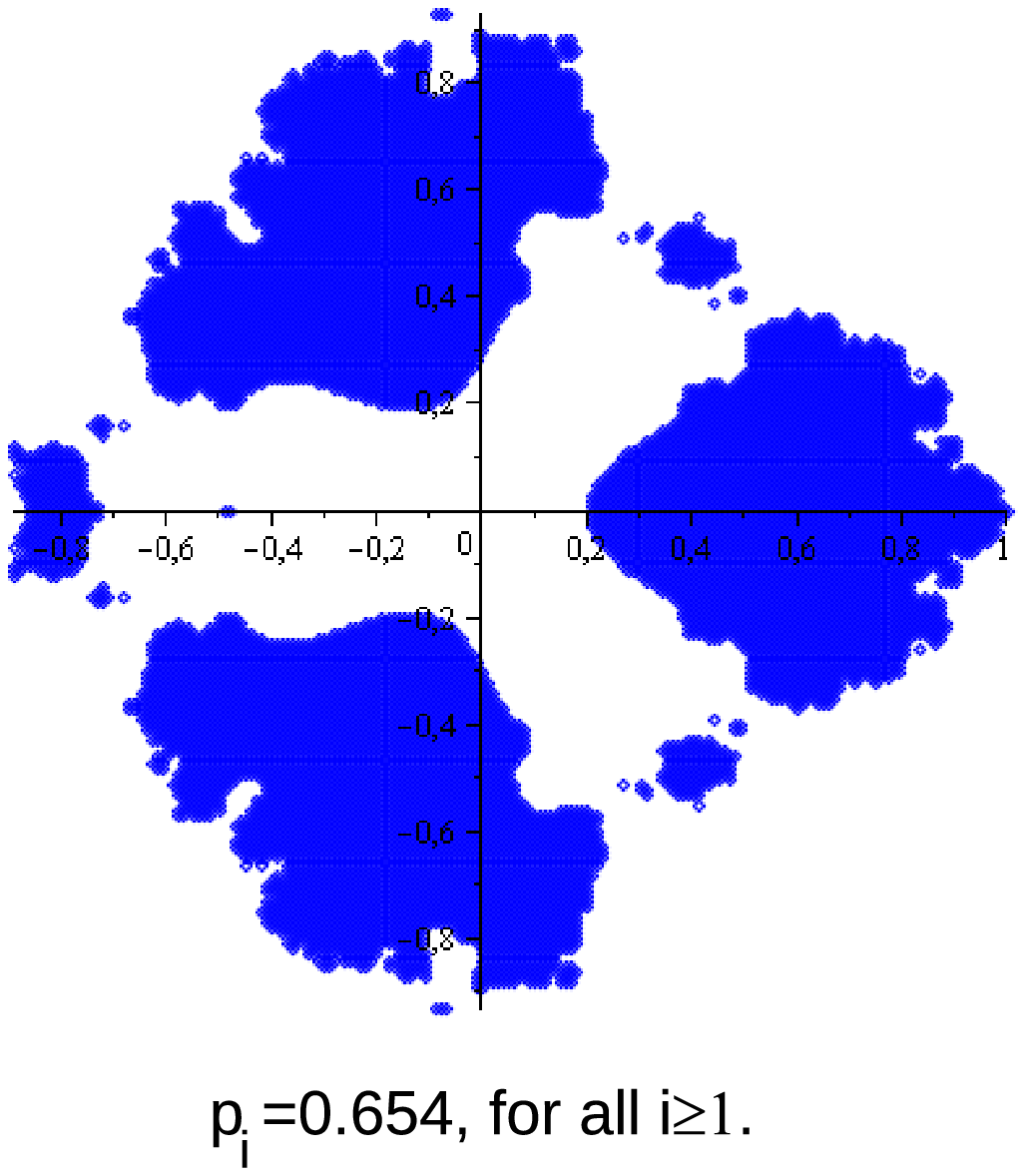}
	\end{figure}
\end{Example}

\begin{Example} 
	Consider the consecutive ordering Bratteli diagram $B$ represented by the matrix $M=\left(\begin{array}{cc} 2 & 1 \\ 3 & 1 \end{array}\right)$. Below, we present some pictures describing the set $\mathcal{E}=\mathcal{F}=\{\lambda\in\mathbb{C}: (u_{F_n}(\lambda))_{n\geq 0} \textrm{ is bounded}\}$ for some choices of $(p_i)_{i\ge 1}$.
	\begin{figure}[h!]
	\centering
	\includegraphics[scale=0.37]{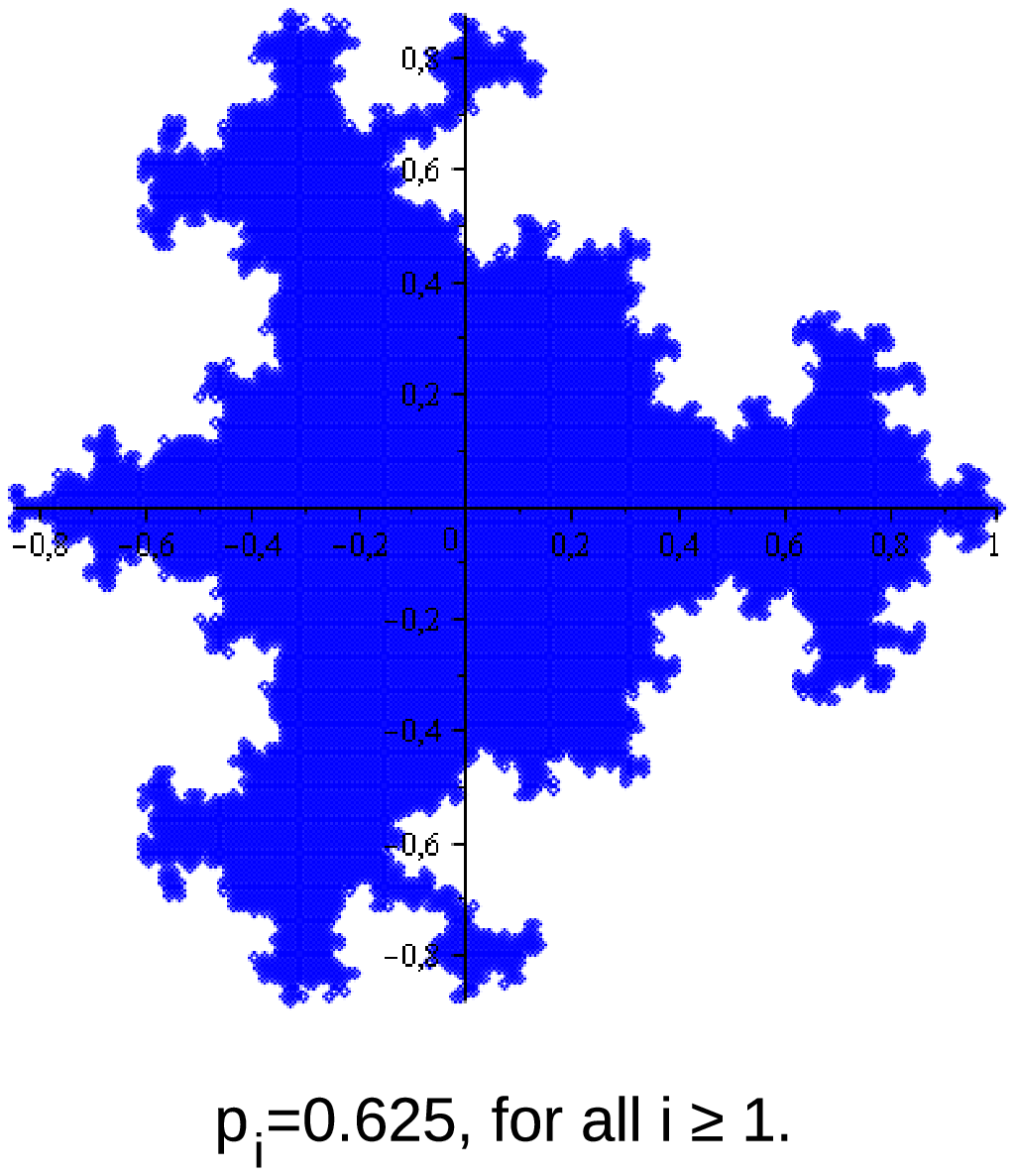}
	\includegraphics[scale=0.37]{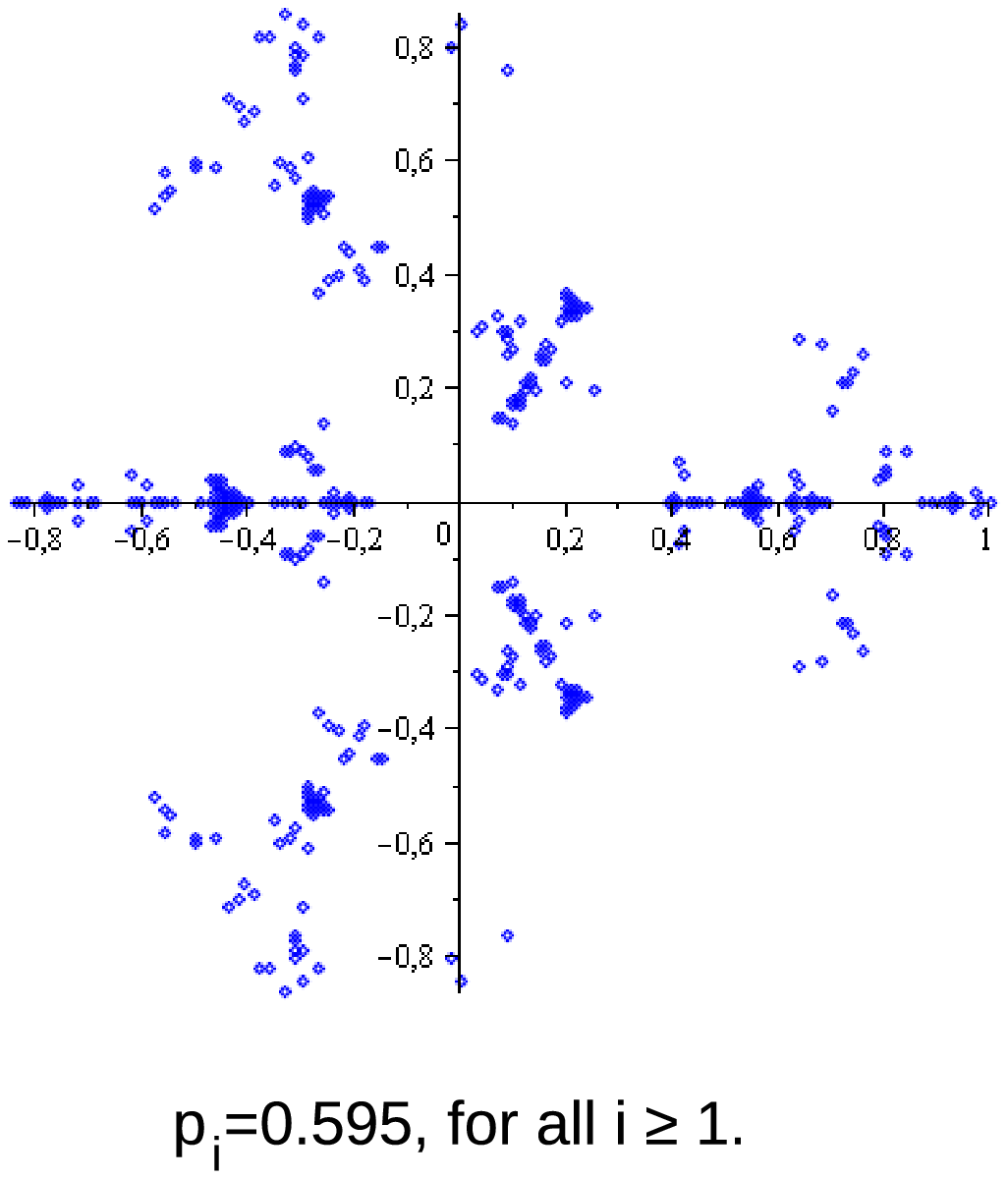}
	\includegraphics[scale=0.37]{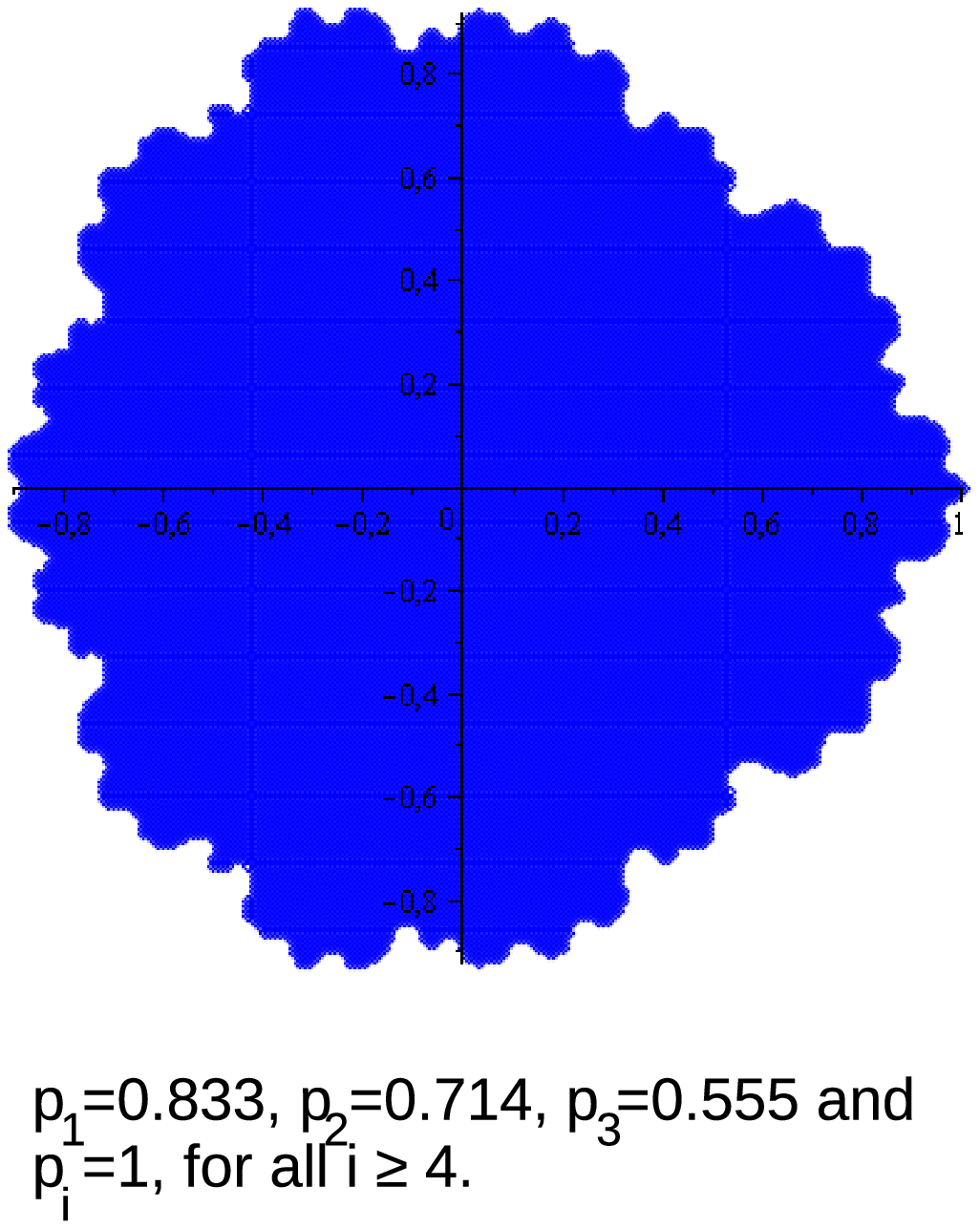}
\end{figure}
\end{Example}	

\begin{Example} 
	Consider the consecutive ordering Bratteli diagram $B$ represented by the matrix $M=\left(\begin{array}{cc} 1 & 5 \\ 9 & 2 \end{array}\right)$. Below, we present some pictures describing the set $\mathcal{E}=\mathcal{F}=\{\lambda\in\mathbb{C}: (u_{F_n}(\lambda))_{n\geq 0} \textrm{ is bounded}\}$ for some choices of $(p_i)_{i\ge 1}$.
	\begin{figure}[h!]
		\centering
		\includegraphics[scale=0.37]{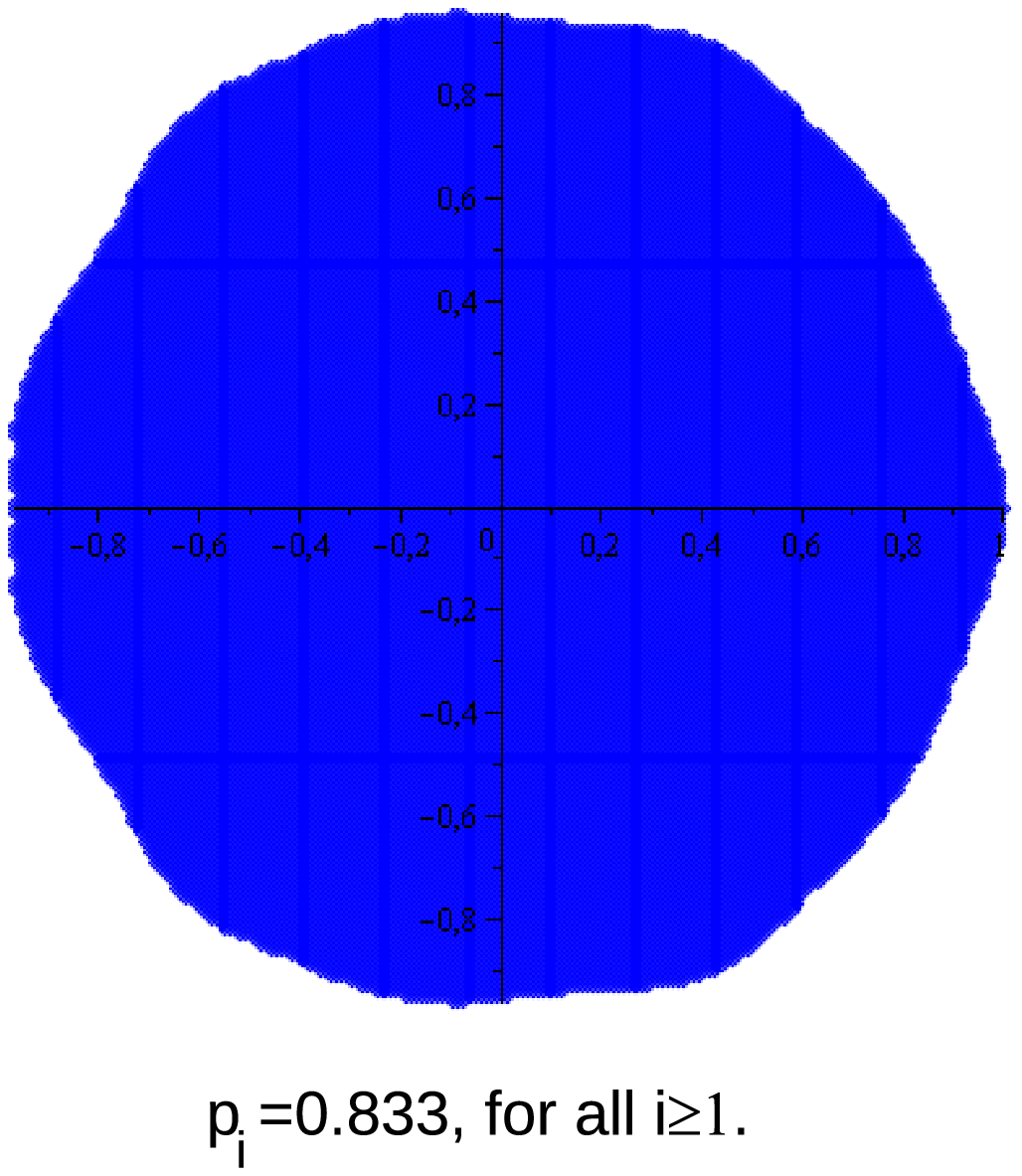}
		\includegraphics[scale=0.37]{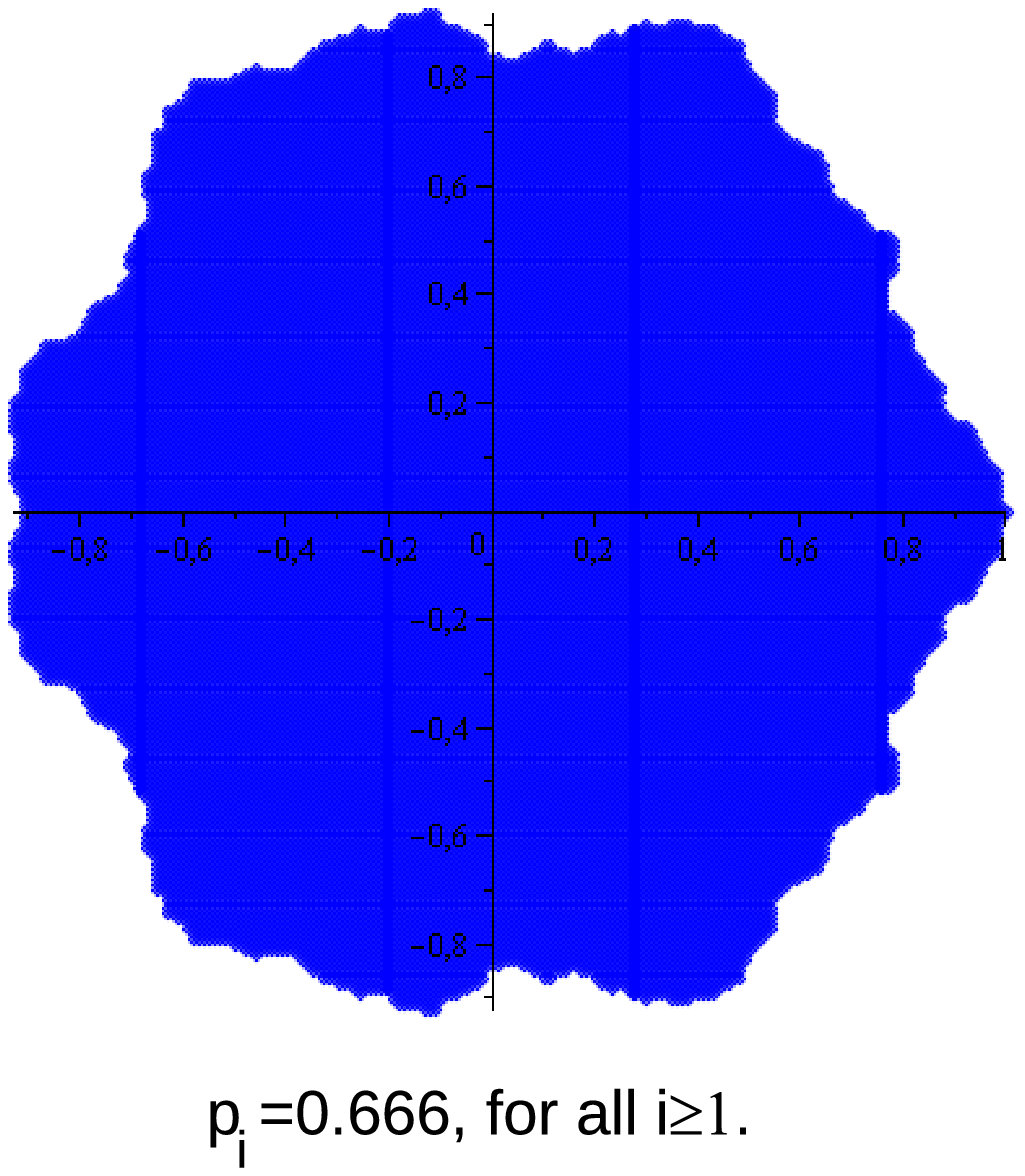}
		\includegraphics[scale=0.37]{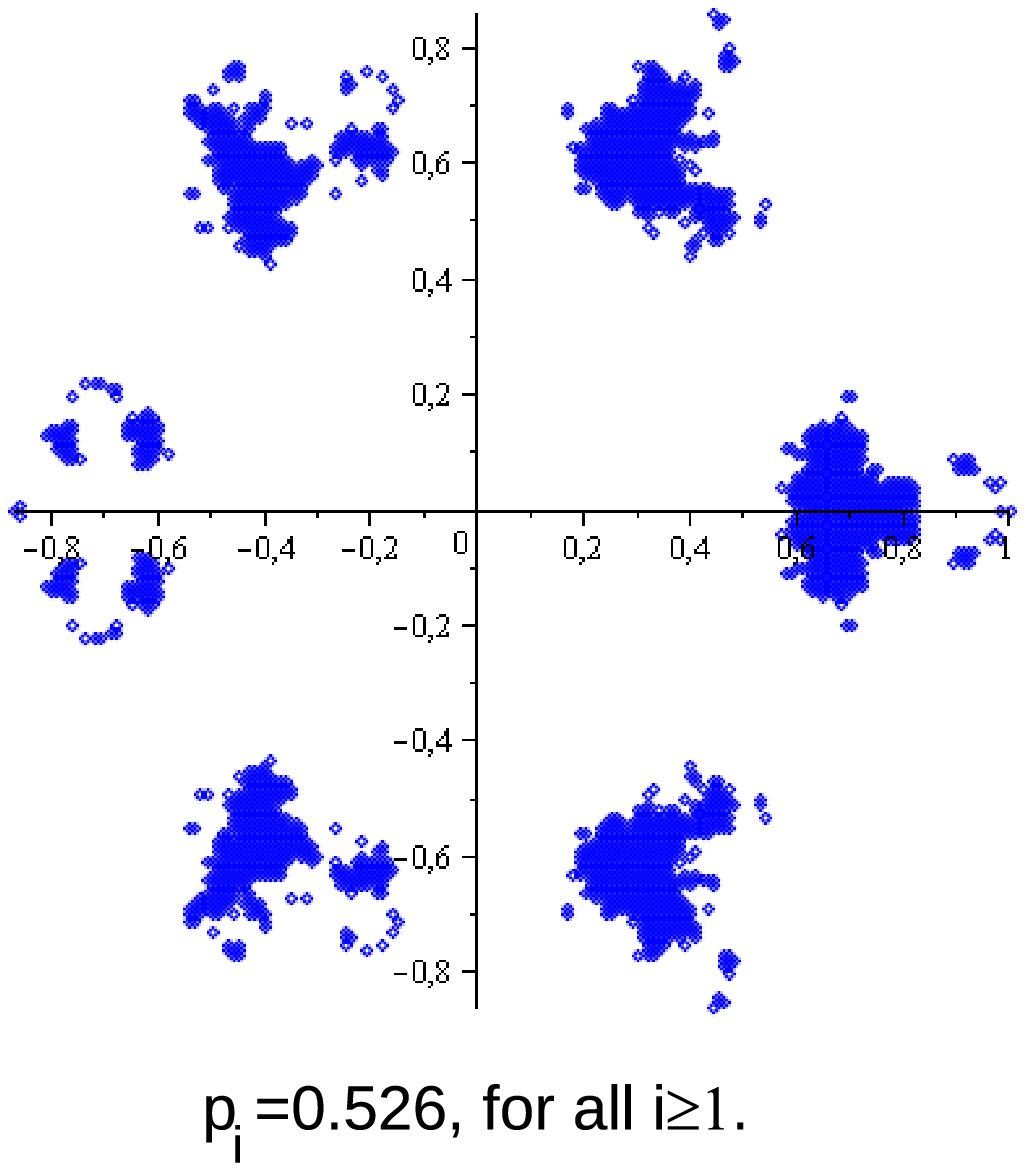}
	\end{figure}
\end{Example}

\begin{remark}
It will be interesting to compute the different parts of the spectrum of $S$ acting on other Banach spaces like $c_0$, $c$, $l^q(\mathbb{N})$, with $q\ge 1$ as done for base $2$ in \cite{em} and for Cantor systems of numeration in \cite{mv}.
\end{remark}

\subsection{Some topological properties of the set $\mathcal{E}$}

Let us suppose for simplicity that $p_i=p\in]0,1[$, for all $i\geq 1$.

\begin{Theorem} \label{teotopologico} Assume that   $\det M = ad-bc < 0$ and $bc>(ad-bc)^2$, then 
the set $\mathcal{E}$ satisfies the following properties:
\begin{enumerate}
	\item $\mathbb{C}\setminus \mathcal{E}$ is a connected set.
	\item If $p<\frac{1}{2}$, then $\mathcal{E}$ is not connected.
\end{enumerate}
\end{Theorem}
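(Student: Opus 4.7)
The plan for (1) is to construct a Green-type function for $\mathcal{E}$ and apply the maximum principle. Since, from Lemma \ref{lemmaseq}, $F_n, G_n \sim \rho^n$ where $\rho$ is the Perron eigenvalue of $M$, and since $u_{F_n}(\lambda)$ and $w_{F_n}(\lambda)$ are polynomials in $\lambda$ of degrees $F_n$ and $G_n$ respectively, the natural candidate is
\[
G(\lambda) := \limsup_{n\to\infty} \frac{1}{\rho^n} \log^+ \max\bigl(|u_{F_n}(\lambda)|, |w_{F_n}(\lambda)|\bigr).
\]
I would establish in order: that the limsup is in fact a limit converging locally uniformly on $\mathbb{C}$, so $G$ is continuous; that $G$ is subharmonic as a limit of the subharmonic functions $\tfrac{1}{\rho^n}\log^+\max(|u_{F_n}|,|w_{F_n}|)$; that $\mathcal{E}$ equals the zero set of $G$, using Proposition \ref{detmenor} to ensure that whenever $\lambda \notin \mathcal{E}$ both $|u_{F_n}|$ and $|w_{F_n}|$ escape at the Perron rate $\rho^n$ (the hypothesis $bc > (ad-bc)^2$ entering here to secure matching growth rates of the two components); and that $\mathcal{E}$ is bounded, since for $|\lambda|$ large the initial point $u_{F_0}(\lambda)$ is large and the leading term of the $g$-iteration grows doubly exponentially. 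With $G$ established, $\mathcal{E}=\{G=0\}$ is compact and the classical maximum principle for subharmonic functions forces any bounded connected component $U$ of $\{G>0\}$ to satisfy $G \equiv 0$ on $\partial U \subset \mathcal{E}$ and hence $G\leq 0$ on $U$, contradicting $G>0$. Hence $\mathbb{C}\setminus\mathcal{E}$ has no bounded components and, being the complement of a compact set in $\mathbb{C}$, consists solely of its unique unbounded component.

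For (2), the strategy is to exhibit the escape of the unique critical value of the map $g(x,y)=\bigl(\tfrac{1}{p}x^a y^b-\tfrac{1-p}{p},\,\tfrac{1}{p}x^c y^d-\tfrac{1-p}{p}\bigr)$ and to leverage that escape into a topological separation of $\mathcal{E}$. The Jacobian of $g$ has determinant proportional to $(ad-bc)\,x^{a+c-1}y^{b+d-1}$, so the critical set is $\{x=0\}\cup\{y=0\}$; since $abc>0$ (and handling $d=0$ as a separate sub-case) the critical value is the single point $(-q,-q)$ with $q=(1-p)/p$. This point corresponds to the parameter $\lambda=0$ via $u_{F_0}=w_{F_0}=(\lambda-1+p)/p$. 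For $p<1/2$ we have $q>1$, and a direct calculation using Hypothesis A (which forces $a+b\geq 2$ and $c+d\geq 2$) gives $|u_{F_1}(0)|\geq q^{a+b}/p-q\geq q^2$ and the analogous bound for $|w_{F_1}(0)|$; iterating and invoking Proposition \ref{detmenor} shows the orbit escapes, hence $\lambda=0\in\mathbb{C}\setminus\mathcal{E}$.

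The main obstacle is the final topological step: deducing from escape of the critical value that $\mathcal{E}$ itself (and not merely its two-dimensional ambient Julia set $\mathcal{K}$) is disconnected. In analogy with the classical one-variable result that a quadratic polynomial has a Cantor Julia set when its critical orbit escapes, my plan is to use the Green function $G$ from (1) as a B\"ottcher-type coordinate near $\infty$: the equipotentials $\{G=c\}$ for small $c>0$ are simple closed curves enclosing $\mathcal{E}$, and by successively pulling back along the inverse branches of the iteration defined through $g$, one constructs nested regions containing $\mathcal{E}$ whose diameters shrink at a uniform rate, producing at least two disjoint closed pieces of $\mathcal{E}$. The condition $bc>(ad-bc)^2$ supplies the expansion estimate needed for this pullback to contract. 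The real technical heart, and what makes this harder than the one-variable case, is that the diagonal $\{x=y\}$ is not invariant under $g$ unless $a+b=c+d$, so one cannot reduce to a one-variable B\"ottcher conjugacy: instead one must work with the holomorphic parametrization $\lambda\mapsto(z_\lambda,z_\lambda)$ and perform the potential-theoretic construction directly on the $\lambda$-plane, using two-variable expansion estimates for $g$ restricted to suitable neighborhoods of the escape locus.
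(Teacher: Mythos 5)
For part (1) your route is genuinely different from the paper's and is plausible, but it front-loads a lot of unproved analysis. The paper never builds a Green function: it proves (Lemma \ref{lematopologico}) that $\mathcal{E}=\bigcap_{n\ge 0}u_{F_n}^{-1}\overline{D(0,R)}$ with $u_{F_{n+1}}^{-1}\overline{D(0,R)}\subset u_{F_n}^{-1}\overline{D(0,R)}$, notes that $\mathbb{C}\setminus u_{F_n}^{-1}\overline{D(0,R)}$ is connected for each single polynomial $u_{F_n}$ by the maximum modulus principle, and takes the increasing union. Your subharmonic function $G$ would need locally uniform convergence of $\rho^{-n}\log^+\max(|u_{F_n}|,|w_{F_n}|)$ and, crucially, continuity of $G$ up to $\partial\mathcal{E}$ with $G\to 0$ there; a bare $\limsup$ of subharmonic functions need not be upper semicontinuous, and without boundary continuity the maximum principle on a bounded component of $\{G>0\}$ does not close. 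Establishing those facts requires exactly the escape estimates of Lemma \ref{lematopologico} (where $bc>(ad-bc)^2$ enters), after which the paper's one-line argument is already available. So your plan is not wrong, but it is a detour that still owes the same hard lemma.

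Part (2) has a genuine gap, which you yourself flag: you never supply the step from ``the critical orbit escapes'' to ``$\mathcal{E}$ is disconnected,'' and the two-variable B\"ottcher/pullback machinery you propose (and correctly identify as problematic because the diagonal is not $g$-invariant) is not needed. The missing idea is to stay entirely in the $\lambda$-plane with the one-variable polynomials $u_{F_n}$. The parameter $\lambda=1-p$ is the point where $u_{F_0}=w_{F_0}=0$; since $a+b\ge 2$ and $c+d\ge 2$, the chain rule shows by induction that $\lambda=1-p$ is a critical point of every polynomial $u_{F_n}$, $n\ge 1$, and its images $u_{F_n}(1-p)$ are exactly the first coordinates of the $g$-orbit of $(0,0)$, which passes through $(-q,-q)$, $q=(1-p)/p>1$ when $p<1/2$, and escapes (this is the computation you did at $\lambda=0$, shifted by one step). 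Hence for all large $n$ the polynomial $u_{F_n}$ has a critical point lying outside $u_{F_n}^{-1}(D(0,R))$, and the Riemann--Hurwitz formula applied to the proper map $u_{F_n}\colon u_{F_n}^{-1}(D(0,R))\to D(0,R)$ forces $u_{F_n}^{-1}(D(0,R))$ to be disconnected. Combined with the nesting $\mathcal{E}=\bigcap_n u_{F_n}^{-1}(D(0,R))$ and the observation that every component of $u_{F_n}^{-1}(D(0,R))$ meets $\mathcal{E}$ (each such component maps onto $D(0,R)$, hence contains a point where $u_{F_n}=0$ and so $|u_{F_{n+1}}|=q<R$, giving a nested sequence of nonempty compacta), this splits $\mathcal{E}$ into at least two nonempty disjoint closed pieces. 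Without this one-variable Riemann--Hurwitz step your argument for (2) does not reach the conclusion.
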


\begin{remark}
1. We conjecture that under the hypothesis of the last theorem, there exists $\frac{1}{2}<\delta <1$ such that $\mathcal{E}$ is connected, for all $p\geq \delta$.

\noindent 2. Recall that we are supposing $abc>1$, then the last theorem holds if $\det M= -1$.
 \end{remark}

\begin{Lemma} \label{lematopologico}
Under the hypothesis of Theorem \ref{teotopologico},  there exists a sufficiently large constant $R=R(p)>1$ such that 
\begin{center} 
$\mathcal{E}=\bigcap_{n=0}^{+\infty} u_{F_n}^{-1}\overline{D(0,R)}$, with $u_{F_{n+1}}^{-1}\overline{D(0,R)}\subset u_{F_n}^{-1}\overline{D(0,R)}$, for all $n\geq 0$.
\end{center} 
\end{Lemma}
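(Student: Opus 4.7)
The lemma combines two claims: the nested inclusion $u_{F_{n+1}}^{-1}\overline{D(0,R)}\subseteq u_{F_n}^{-1}\overline{D(0,R)}$ and the equality $\mathcal{E}=\bigcap_n u_{F_n}^{-1}\overline{D(0,R)}$. The containment $\bigcap_n u_{F_n}^{-1}\overline{D(0,R)}\subseteq\mathcal{E}$ is immediate from the definition of $\mathcal{E}$. My plan therefore reduces to proving (a) a uniform bound $|u_{F_n}(\lambda)|\leq R$ for all $\lambda\in\mathcal{E}$ and all $n\geq 0$, which gives $\mathcal{E}\subseteq u_{F_n}^{-1}\overline{D(0,R)}$, and (b) the one-step monotonicity $|u_{F_n}(\lambda)|>R\Rightarrow|u_{F_{n+1}}(\lambda)|>R$.

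The main algebraic tools are the two coupling identities from the proof of Proposition \ref{detmenor}: writing $\Delta:=bc-ad>0$, one has
$$|pw_{F_{n+1}}+(1-p)|^{b}=|pu_{F_{n+1}}+(1-p)|^{d}\,|u_{F_n}|^{\Delta} \quad\text{and}\quad |pu_{F_{n+1}}+(1-p)|^{c}=|pw_{F_{n+1}}+(1-p)|^{a}\,|w_{F_n}|^{\Delta}.$$
These link coordinate sizes across consecutive levels. Further, since $\Delta>0$ and $a+d\geq 1$, the Perron eigenvalue $\lambda_1=\frac{a+d+\sqrt{(a+d)^{2}+4\Delta}}{2}$ of $M$ satisfies $\lambda_1>1$ and admits a positive left eigenvector $(\alpha,\beta)$. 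Setting $\Phi_{n}(\lambda):=|u_{F_n}(\lambda)|^{\alpha}|w_{F_n}(\lambda)|^{\beta}$, a direct computation on the recursions shows that whenever $|u_{F_n}|^{a}|w_{F_n}|^{b}$ and $|u_{F_n}|^{c}|w_{F_n}|^{d}$ both exceed $2(1-p)$, one has $\Phi_{n+1}\geq(2p)^{-(\alpha+\beta)}\Phi_{n}^{\lambda_1}$; iteration then yields $\Phi_{n}\to\infty$ as soon as $\Phi_n$ once exceeds a threshold $T_{0}=T_0(p)$.

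For part (a), the joint escape implies that on $\mathcal{E}$ we must have $\Phi_{n}\leq T_{0}$ for every $n$. Combined with the first coupling identity, which gives $|pw_{F_{n+1}}+(1-p)|^{b}\leq(p|u_{F_{n+1}}|+1-p)^{d}\,|u_{F_n}|^{\Delta}$, and the symmetric estimate for $u$, one obtains after elementary manipulation a constant $R=R(p)>1$ such that $|u_{F_n}(\lambda)|\leq R$ for all $n\geq 0$ and $\lambda\in\mathcal{E}$. For part (b), the proof splits into two cases: if $|u_{F_n}|^{a}|w_{F_n}|^{b}\geq(p+1)R+1$, then directly $|u_{F_{n+1}}|\geq R+1/p>R$; in the opposite case $|w_{F_n}|^{b}$ must be small (of order at most $2R/R^{a}$), and invoking the second coupling identity with the hypothesis $bc>\Delta^{2}$ supplies the positive exponent surplus (heuristically, $c/\Delta>\Delta/b$) needed to conclude that $|u_{F_{n+1}}|>R$ anyway, provided $R$ is chosen large enough.

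The main obstacle is part (b), specifically controlling the ``bad case'' where $|u_{F_n}|>R$ while $|w_{F_n}|$ is very small: here, rather than the iterate escaping strictly in one step, it could a priori dip near the affine fixed point $-(1-p)/p$ of the $u$-recursion before escaping again, which would break the nested inclusion. Ruling this out requires the delicate two-variable exponent accounting in which the hypothesis $bc>\Delta^{2}$ is indispensable, together with a sufficiently large choice of $R$ depending on $p$ and the fixed matrix $M$.
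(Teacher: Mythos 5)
Your high-level plan (a uniform bound for $|u_{F_n}|$ on $\mathcal{E}$ plus monotonicity of the preimages $u_{F_n}^{-1}\overline{D(0,R)}$) matches the structure of the lemma, and you correctly identify the two coupling identities, the role of the hypothesis $bc>(ad-bc)^2$, and the genuine crux: the ``bad case'' $|u_{F_n}|>R$ with $|w_{F_n}|$ tiny, where $u_{F_{n+1}}=\frac{1}{p}u_{F_n}^aw_{F_n}^b-\frac{1-p}{p}$ can sit next to $-(1-p)/p$. But your proposed resolution of that case is a one-step argument using only level-$n$ data, and it cannot close. The identity $|pu_{F_{n+1}}+1-p|^c=|pw_{F_{n+1}}+1-p|^a|w_{F_n}|^{\Delta}$ gives no lower bound on $|w_{F_n}|$; in the regime $|u_{F_n}|^a|w_{F_n}|^b<(p+1)R+1$ you only obtain an \emph{upper} bound on $|u_{F_{n+1}}|$, whereas to conclude $|u_{F_{n+1}}|>R$ you need the \emph{lower} bound $|u_{F_n}|^a|w_{F_n}|^b> pR+(1-p)$, which is false for $|w_{F_n}|$ small enough. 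The information that rules out the bad case lives at levels $n-1$ and $n-2$, and the paper's proof is an induction that propagates it backward: from $|u_{F_n}|>R$ and $|w_{F_n}|\le 1$ one gets $|w_{F_{n-1}}|>(pR-(1-p))^{c/\Delta}$ (so $|u_{F_{n-1}}|\le 1$), and then $|u_{F_{n-2}}|\ge K(p)\,R^{bc/\Delta^2}>R$ — this is exactly where $bc>\Delta^2$ enters — contradicting the inductive hypothesis at level $n-2$ (for the nesting) or feeding the induction that forces escape (for the equality with $\mathcal{E}$). Your sketch contains no such backward propagation, so the ``delicate two-variable exponent accounting'' you defer to is precisely the missing proof.

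Two further gaps. First, your escape criterion via the Perron functional $\Phi_n=|u_{F_n}|^{\alpha}|w_{F_n}|^{\beta}$ is not self-sustaining: the inequality $\Phi_{n+1}\ge(2p)^{-(\alpha+\beta)}\Phi_n^{\lambda_1}$ needs \emph{both} monomials $|u_{F_n}|^a|w_{F_n}|^b$ and $|u_{F_n}|^c|w_{F_n}|^d$ to exceed $2(1-p)$, and since the weights $(\alpha,\beta)$ differ from $(a,b)$ and $(c,d)$, a large $\Phi_n$ does not guarantee either condition, nor does the condition persist after one step; so ``iteration then yields $\Phi_n\to\infty$'' is unjustified. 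Even granting $\Phi_n\le T_0$ on $\mathcal{E}$, that bounds a weighted product and not $|u_{F_n}|$ itself (again $|u_{F_n}|$ huge with $|w_{F_n}|$ tiny is the unexcluded configuration), so part (a) is not an ``elementary manipulation'' away. Second, a small point: the inclusion $\bigcap_n u_{F_n}^{-1}\overline{D(0,R)}\subseteq\mathcal{E}$ is not immediate from the definition, since $\mathcal{E}$ requires boundedness of $(w_{F_n})$ as well; it uses Proposition \ref{detmenor} (valid here because $\det M<0$).
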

\begin{proof}
Let $R>\frac{2-p}{p}>1$ be a constant that later will be chosen sufficiently large.

\smallskip

\noindent \textbf{Claim 1:} If $|u_{F_0}|=|w_{F_0}|>1$ then $(u_{F_k})_{k\geq 0}$ is not bounded.

The claim follows directly from \eqref{deffib} and we leave the details to the reader.

\smallskip

\noindent \textbf{Claim 2:} if $|u_{F_n}(\lambda)|>R$ for some integer $n\geq 0$, then $(u_{F_k})_{k\geq 0}$ is not bounded.

From Claim 1, we have that Claim 2 holds for $n=0$.

If $|u_{F_1}|> R$, then $|u_{F_0}|^{a+b}\geq pR-(1-p)$. Thus, $|u_{F_0}|>1 $ and so $(u_{F_k})_{k\geq 0}$ is not bounded. Hence, the claim is true for $n=1$.

Assume that the claim is true for all integers $k\in\{0,\ldots,n-1\}$.

Suppose that $|u_{F_n}|>R$. Hence, $|w_{F_n}|\leq 1$ since otherwise $(u_{F_k})_{k\geq 0}$ is unbounded.

Therefore, $$pR-(1-p)< |pu_{F_n}+1-p|=|u_{F_{n-1}}^aw_{F_{n-1}}^b|=|u_{F_{n-1}}^cw_{F_{n-1}}^d|^{\frac{a}{c}}|w_{F_{n-1}}|^{\frac{bc-ad}{c}} .$$

Since $u_{F_{n-1}}^cw_{F_{n-1}}^d=pw_{F_n}+1-p$ and $|w_{F_n}|\leq 1$, we deduce that $$(pR-(1-p))^{\frac{c}{bc-ad}}< |w_{F_{n-1}}|.$$
Thus $|w_{F_{n-1}}|>1$ and hence $|u_{F_{n-1}}| \leq 1$.

On the other hand, $$p(pR-(1-p))^{\frac{c}{bc-ad}}-(1-p)< |u_{F_{n-2}}^cw_{F_{n-2}}^d|\leq |pu_{F_{n-1}}+1-p|^{\frac{d}{b}}|u_{F_{n-2}}|^{\frac{bc-ad}{b}}\leq |u_{F_{n-2}}|^{\frac{bc-ad}{b}}.$$

Thus, $$|u_{F_{n-2}}|> \left(p(pR-(1-p))^{\frac{c}{bc-ad}}-(1-p)\right)^{\frac{b}{bc-ad}}\geq K(p) R^{\frac{bc}{(bc-ad)^2}},$$ where $K(p)$ is a positive constant.

Since $bc> (bc-ad)^2$, it follows that $|u_{F_{n-2}}|> R$, for $R$ sufficiently large and the proof of claim 1 is done.

Hence, by the claim, we deduce that $\mathcal{E}=\bigcap_{n=0}^{+\infty} u_{F_n}^{-1}\overline{D(0,R)}$.

\smallskip

\noindent \textbf{Claim 3:} $u_{F_{n+1}}^{-1}\overline{D(0,R)}\subset u_{F_n}^{-1}\overline{D(0,R)}$, for all $n\geq 0$.

Indeed, if $| u_{F_1}| = | \frac{1}{p} u_{F_{0}}^{a+b} - \frac{1-p}{p} | \leq R$, then
$$| u_{F_0} | \leq (p R+ 1-p)^{1/ a+b} <R,$$
and the claim holds for $n=0$.
The case $n=1$, can also be proved easily and is left to the reader.

Assume that the claim holds for all $k=0,\; \ldots, n-1,\; n \geq 2$ and that $| u_{F_{n+1}} | \leq R$.
Suppose that $| u_{F_{n}} | > R$, then $| w_{F_{n}} | \leq 1$, since otherwise $| u_{F_{n+1}} | >R$.
We deduce as done before that
$$(pR-(1-p))^{\frac{c}{bc-ad}}< |w_{F_{n-1}}|.$$
Hence
 $\frac{1}{p} | u_{F_{n-1}} |^c  (pR-(1-p))^{\frac{dc}{bc-ad}} - \frac{1-p}{p} \leq | w_{F_{n}} |  \leq 1$.
We deduce that $$|u_{F_{n-1}}|<(pR-1+p)^{\frac{-d}{bc-ad}}< 1 <R.$$

Thus as done before $$|u_{F_{n-2}}|> \left(p(pR-(1-p))^{\frac{c}{bc-ad}}-(1-p)\right)^{\frac{b}{bc-ad}}\geq K(p) R^{\frac{bc}{(bc-ad)^2}} >R.$$
This contradicts the hypothesis of induction for $k=n-2$ since $|u_{F_{n-1}}| <R$ and $|u_{F_{n-2}}|  >R.$

Hence $| u_{F_{n}} |  < R$ and we obtain the claim for $k=n$.
\end{proof}

\begin{Remark} \label{disco}
Lemma \ref{lematopologico} is true if we change $\overline{D(0,R)}$ by $D(0,R)$.
\end{Remark}

\begin{proof}[Proof of Theorem \ref{teotopologico}] 

$(1)$ By Lemma \ref{lematopologico}, $$\mathbb{C}\setminus\mathcal{E}=\bigcup_{n=0}^{\infty} \mathbb{C}\setminus u_{F_n}^{-1} \overline{D(0,R)}.$$
		Since $\mathbb{C}\setminus\overline{D(0,R)}$ is
	connected, it follows from the \emph{maximum modulus principle} that for
	each holomorphic map $u_{F_n}$, $\mathbb{C}\setminus
	u_{F_n}^{-1}\overline{D(0,R)}$ is connected for all $n\geq 0$. On the
	other hand, since $\mathbb{C}\setminus u_{F_n}^{-1}\overline{D(0,R)}$
	contains a neighbourhood of infinity for all $n\geq 0$, we deduce
	that $\mathbb{C}\setminus \mathcal{E}$ is connected.
	
\smallskip

$(2)$ We can show easily by induction on $n$ that $\frac{1-p}{p}$ is a critical point of $u_{F_n}$ (and also $w_{F_n}$), for all $n\geq 1$. Since by Remark \ref{remark417}, $\mathcal{E}\subset \sigma_a \subset \sigma$ and the spectrum $\sigma$ is contained in $\overline{D(0,1)}$, we deduce that if $p<\frac{1}{2}$, then $\frac{1-p}{p}\not\in \mathcal{E}$. Therefore, by Lemma \ref{lematopologico} and Remark \ref{disco}, there exists an integer $N$ such that $$\frac{1-p}{p}\not\in u_{F_n}^{-1} D(0,R) \textrm{, for all } n\geq N.$$ Hence, by the Riemann-Hurwitz formula, we deduce that $u_{F_n}^{-1} D(0,R)$ is not connected, for all $n\geq N$. Thus, by Lemma \ref{lematopologico} we are done.
\end{proof}


\section{Generalization}
\label{generalization}
Now, let $B=(V,E,\geq)$ be a $2 \times 2$ simple ordered Bratteli diagram endowed with the consecutive ordering and having incidence matrices $M_n=\left(\begin{array}{cc}
a_n & b_n \\
c_n & d_n
\end{array}\right)$. Suppose that $a_nb_nc_n>0$ and that $c_n+d_n>1$, for all $n\geq 1$. Therefore B satisfies Hypothesis A.

Consider $\left(\begin{array}{cccc}
A_k & B_k \\
C_k & D_k
\end{array}\right):= M_k\cdot M_{k-1}\cdot\ldots\cdot M_1$, for all $k\geq 1$.

Let $F_0=G_0=1$ and for each $k\geq 1$, let $F_k=A_k+B_k$ and $G_k=C_k+D_k$.

As before, we can prove that for all $n\geq 1$
$$
F_{n+1}=\left(a_{n+1}+\frac{b_{n+1}}{b_n}d_n\right)F_n-\left(\frac{b_{n+1}}{b_n}a_nd_n-b_{n+1}c_n\right)F_{n-1}
$$ 
and 
$$
G_{n+1}=\left(a_{n+1}+\frac{b_{n+1}}{b_n}d_n\right)G_n-\left(\frac{b_{n+1}}{b_n}a_nd_n-b_{n+1}c_n\right)G_{n-1} \, .
$$

Furthermore, for each $\lambda\in\mathbb{C}$, let $(u_n)_{n\geq 1}$ and $(w_{F_n})_{n\geq 0}$ be the sequences defined like in relation \ref{deffib}, changing $a$ and $b$ by $a_n$ and $b_n$ in $u_{F_n}$ and changing $c$ by $c_n$ and $d$ by $d_n$ in $w_{F_n}$, respectively.

Like in Theorem \ref{teorema431}, we can prove that $$\sigma_{pt}(S)=\{\lambda\in\mathbb{C}:(u_n(\lambda))_{n\geq 1}
\textrm{ is bounded}\},$$
where here, the maps $g_n:\mathbb{C}^2\longrightarrow\mathbb{C}^2$ are defined by 
$$g_n(x,y)=\left(\frac{1}{p_{n+1}}x^{a_n}y^{b_n}-\frac{1-p_{n+1}}{p_{n+1}}, \frac{1}{p_{n+1}}x^{c_n}y^{d_n}-\frac{1-p_{n+1}}{p_{n+1}}\right),$$ for all $n\geq 1$.

\medskip

Let $B=(V,E,\geq)$ be a $l \times l$ ($l\geq 3$) simple ordered Bratteli diagram endowed with the consecutive ordering and having incidence matrices $M_n=(m_{i,j}^{(n)})$ with $\sum_{j=1}^l m_{i,j}^{(n)}>1$, for all $i\in\{1,\ldots, l\}$ and $n\geq 1$.

Like before, we can prove that the point spectrum of $S$ is contained in the fibered Julia set $\{z\in\mathbb{C}^l: (\psi_n(z))_{n\geq 0} \textrm{ is bounded}\}$, where for all $z=(z_1,\ldots,z_l)\in\mathbb{C}^l$, $\psi_n(z)=g_n\circ\ldots\circ g_0 (z)$ and  $g_n:\mathbb{C}^l\longrightarrow\mathbb{C}^l$ are maps defined by
\begin{center}	
$g_n(z)=\left(
\frac{1}{p_{n+1}}z_1^{m_{1,1}^{(n)}}z_2^{m_{1,2}^{(n)}}\ldots z_l^{m_{1,l}^{(n)}} -\frac{1-p_{n+1}}{p_{n+1}},
\ldots,
\frac{1}{p_{n+1}}z_1^{m_{l,1}^{(n)}}z_2^{m_{l,2}^{(n)}}\ldots z_l^{m_{l,l}^{(n)}} -\frac{1-p_{n+1}}{p_{n+1}} \right)$
\end{center}
 for all $n\geq 1$.


\section*{Acknowledgments} The second author thanks Fabien Durand and Thierry Giordano for fruitful discussions.


\begin{thebibliography}{99999}

\bibitem{BM}
\newblock F. Bayart and E. Matheron,
\newblock Dynamics of Linear Operators,
\newblock \emph{Cambridge Univ. Press} (2009).

\bibitem{BDK}
\newblock S. Bezuglyi, A. H. Dooley and J. Kwiatkowski,
\newblock Topologies on the group of Borel automorphisms of a standard Borel space,
\newblock \emph{Topol. Methods in Nonlinear Analysis}, \textbf{27} (2006), 333--385.

\bibitem{BK}
\newblock S. Bezuglyi and O. Karpel,
\newblock Bratteli diagrams: structure, measures, dynamics. 
\newblock \emph{Dynamics and numbers}, \emph{Contemp. Math.} \textbf{669} (2016), 1--36.

\bibitem{brat}
\newblock O. Bratteli,
\newblock Inductive limits of finite-dimensional C${}^*$-algebras,
\newblock \emph{Trans. Amer. Math. Soc.}, \textbf{171} (1972), 195--234.

\bibitem{bre}
\newblock P. Br\'emaud,
\newblock Markov Chains: Gibbs Fields, Monte Carlo Simulation, and Queues,
\newblock \emph{Springer}, New York (1999).

\bibitem{bdm}
\newblock X. Bressaud, F. Durand and A. Maass,
\newblock Necessary and sufficient conditions to be an eigenvalue for linearly recurrent dynamical Cantor systems,
\newblock \emph{J. London Math. Soc.}, \textbf{72(3)} (2005), 799--816.

\bibitem{BC} 
\newblock X. Buff and A. Ch\'eritat,
\newblock Quadratic Julia sets with positive area,
\newblock \emph{Ann. Math.}, \textbf{176} (2012), 673--746.

\bibitem{cap}
\newblock D. A. Caprio,
\newblock A class of adding machines and Julia sets,
\newblock \emph{Discrete and Continuous Dynamical Systems - A}, \textbf{36} (2016), 5951--5970.

\bibitem{Cr} 
\newblock L. Carleson,
\newblock Complex Dynamics,
\newblock \emph{Lecture Notes UCLA} (1990).

\bibitem{DV} 
\newblock R. Devaney,
\newblock An Introduction to Chaotic Dynamical Systems,
\newblock \emph{Addison-Wesley} 2nd ed. (1989).

\bibitem{Do}
\newblock A. Douady,
\newblock Disques de Siegel et anneaux de Herman,
\newblock \emph{S\'em. Bourbaki 39e ann\'ee}, \textbf{677} (1986/1987).

\bibitem{DH}
\newblock A. Douady and J. Hubbard,
\newblock \'Etude dynamique des polyn\^ome complexes,
\newblock \emph{Pr\'epublication math\'ematiques d'Orsay}, 2/4 (1984/1985).

\bibitem{Du}
\newblock F. Durand,
\newblock Combinatorics on Bratteli diagrams and dynamical systems,
\newblock chapter on \emph{Combinatorics, Automata and Number Theory}, edited by V. Berth\'e, M. Rigo, \emph{Cambridge University Press} (2010).

\bibitem{em}
\newblock H. El Abdalaoui and A. Messaoudi,
\newblock On the spectrum stochastic perturbations of the shift and Julia Sets,
\newblock \emph{Fundamenta Mathematicae}, \textbf{218} (2012), 47--68.

\bibitem{Fa}
\newblock K. Falconer,
\newblock Fractal Geometry:  Mathematical Foundations and Applications,
\newblock \emph{Wiley}, Ch. 14 (1990).

\bibitem{fatou19}
\newblock P. Fatou,
\newblock Sur les \'equations fonctionnelles, 
\newblock \emph{Bull. Soc. Mat. France}, \textbf{47} (1919), 161--271; \textbf{48} (1920), 33--94, 208--314.

\bibitem{fatou21}
\newblock  P. Fatou,
\newblock  Sur les fonctions qui admettent plusieurs th\'eor\`emes de multiplication,
\newblock  \emph{C.R.A.S.}, \textbf{173} (1921), 571--573.

\bibitem{FS}
\newblock J. E. Fornaess and N. Sibony,
\newblock Fatou and Julia sets for entire mappings in $\mathbb{C}^k$,
\newblock \emph{Math. Ann.}, \textbf{311} (1998), 27--40.


\bibitem{GPS}
\newblock T. Giordano, I. Putnam and C. Skau,
\newblock Topological orbit equivalence and C${}^*$-crossed products,
\newblock \emph{J. Reine Angew. Math.}, \textbf{469} (1995), 51--111.

\bibitem{GEM}
\newblock K. Grosse-Erdmann and A. Peris-Manguillot,
\newblock Linear Chaos,
\newblock \emph{Springer} (2011).

\bibitem{GS}
\newblock V. Guedj and N. Sibony,
\newblock Dynamics of polynomial automorphisms of $\mathbb{C}^k$,
\newblock \emph{Ark. Mat.}, \textbf{40} (2002), 207--243.

\bibitem{HPS}
\newblock R. H. Herman, I. Putnam and C. Skau,
\newblock Ordered Bratteli diagrams, dimension groups, and topological dynamics,
\newblock \emph{Int. J. Math.}, \textbf{3(6)} (1992), 827--864.

\bibitem{bmn}
\newblock B. Host,
\newblock Substitution subshifts and Bratteli diagrams
\newblock Chapter on \emph{Topics in Symbolic Dynamics and Applications},
\newblock edited by F. Blanchard, A. Maass, A. Nogueira,
\newblock \emph{Cambridge University Press, Cambridge, U.K.} (2000).

\bibitem{julia18}
\newblock G. Julia,
\newblock  M\'emoire sur l'it\'eration des fonctions rationnelles,
\newblock  \emph{J. Math. Pure Appl.}, \textbf{8} (1918), 47--245.

\bibitem{julia22}
\newblock G. Julia,
\newblock  M\'emoire sur la permutabilit\'e des fractions rationnelles,
\newblock  \emph{Annales scientifiques de l'\'Ecoles Normale Sup\'erieure}, \textbf{39(11)} (1922), 131--215.

\bibitem{K}
\newblock L. Keen,
\newblock Julia sets,
\newblock \emph{Chaos and Fractals: The Mathematics behind the Computer Graphics, edit. Devaney and Keen, Proc. Symp. Appl. Math., Amer. Math. Soc.}, \textbf{39} (1989), 57--74.


\bibitem{kt1}
\newblock P. R. Killeen and T. J. Taylor,
\newblock A stochastic adding machine and complex dynamics,
\newblock \emph{Nonlinearity}, \textbf{13} (2000), 1889--1903.

\bibitem{L}
\newblock M. Lyubich,
\newblock The dynamics of rational transforms: the topological picture,
\newblock \emph{Russian Math. Surveys}, \textbf{41(4)} (1986), 43--117.

\bibitem{m}
\newblock K. Medynets,
\newblock Cantor aperiodic systems and Bratteli diagrams,
\newblock \emph{C.R., Math., Acad. Sci. Paris}, \textbf{342(1)} (2006), 43--46.


\bibitem{msv}
\newblock A. Messaoudi, O. Sester and G. Valle,
\newblock Spectrum of stochastic adding machines and fibered Julia sets,
\newblock \emph{Stochastics and Dynamics}, \textbf{13} (2013), 1250021, 26 pages.

\bibitem{ms}
\newblock A. Messaoudi and D. Smania,
\newblock Eigenvalues of Fibonacci stochastic adding machine,
\newblock \emph{Stochastics and Dynamics}, \textbf{10} (2010), 291--313.

\bibitem{mu}
\newblock A. Messaoudi and R. M. A. Uceda,
\newblock Stochastic adding machine and 2-dimensional Julia sets,
\newblock \emph{Discrete and Continuous Dynamical Systems - A}, \textbf{34} (2014), 5247--5269.

\bibitem{mv}
\newblock A. Messaoudi and G. Valle,
\newblock Spectra of generalized stochastic adding machines,
\newblock \emph{Fundamenta Mathematicae} (2017).

\bibitem{MT}
\newblock J. Milnor and W. Thurston,
\newblock Iterated maps of the interval,
\newblock \emph{Dynamical Systems}, Maryland (1986/1987), 465--563, \emph{edit. J.C.Alexander, Lect. Notes Math., Springer}, \textbf{1342}  (1988).


\bibitem{s}
\newblock O. Sester,
\newblock  Hyperbolicit\'e des polyn\^omes fibr\'es,
\newblock  \emph{Bull. Soc. Math. France}, \textbf{127} (1999), 393--428.


\bibitem{Ver}
\newblock A. Vershik,
\newblock Uniform algebraic approximation of shift and multiplication operators,
\newblock \emph{Dokl. Acad. Nauk SSSR}, \textbf{259} (1981), 526--529. (Russian)

\bibitem{Ver2}
\newblock A. Vershik,
\newblock A theorem on periodic Markov approximation in ergodic theory,
\newblock \emph{Ergodic Theory and related topics, Berlin: Akademie-Verlag} (1982), 195--206.

\bibitem{Y}
\newblock J. C. Yoccoz,
\newblock Lin\'earisation des germes de diff\'eomorphismes holomorphes de $(C, 0)$,
\newblock \emph{C. R. Acad. Sci. Paris} \textbf{306} (1988), 55--58.

\end{thebibliography}

\end{document}